\def\subsection{\@startsection{subsection}{2}%
  \z@{.7\linespacing\@plus.7\linespacing}{.2\linespacing}%
  {\centering\normalfont\scshape}}
\newtheorem{theorem}{Theorem}[section]
\newtheorem{proposition}[theorem]{Proposition}
\newtheorem{lemma}[theorem]{Lemma}
\newtheorem{conjecture}[theorem]{Conjecture}
\theoremstyle{definition}
\newtheorem*{remark*}{Remark}
\numberwithin{equation}{section}
\renewcommand{\AA}{\mathbb{A}}
\newcommand{\ZZ}{\mathbb{Z}}
\newcommand{\FF}{\mathbb{F}}
\newcommand{\CC}{\mathbb{C}}
\newcommand{\RR}{\mathbb{R}}
\newcommand{\Ac}{\mathcal{A}}
\newcommand{\1}{\mathbbm{1}}
\newcommand{\ulambda}{\underline{\lambda}}
\newcommand{\umu}{\underline{\mu}}
\newcommand{\unu}{\underline{\nu}}
\newcommand{\utau}{\underline{\tau}}
\newcommand{\ukappa}{\underline{\kappa}}
\newcommand{\usigma}{\underline{\sigma}}
\renewcommand{\utau}{\underline{\tau}}
\newcommand{\urho}{\underline{\rho}}
\newcommand{\sums}[1]{\sum_{\substack{#1}}}
\newcommand{\bigcups}[1]{\bigcup_{\substack{#1}}}
\newcommand{\ang}[1]{\langle#1\rangle}
\newcommand{\norm}[1]{\lVert#1\rVert}
\newcommand{\abs}[1]{\lvert#1\rvert}
\newcommand{\biggabs}[1]{\bigg\lvert#1\bigg\rvert}
\newcommand{\Biggabs}[1]{\Bigg\lvert#1\Bigg\rvert}
\DeclareMathOperator{\Ind}{Ind}
\DeclareMathOperator{\rk}{rk}
\DeclareMathOperator{\GL}{GL}
\DeclareMathOperator{\PGL}{PGL}
\DeclareMathOperator{\Par}{Par}
\DeclareMathOperator{\re}{Re}
\newcommand{\qbin}[2]{\genfrac{[}{]}{0pt}{}{{#1}}{{#2}}_q}
\begin{document}

\title[Intersection theorems for finite general linear groups]{Intersection theorems for finite\\general linear groups}

\author{Alena Ernst and Kai-Uwe Schmidt}
\address{Department of Mathematics, Paderborn University, Warburger Str.\ 100, 33098 Paderborn, Germany.}
\email{alena.ernst@math.upb.de}
\email{kus@math.upb.de}

\thanks{Funded by the Deutsche Forschungsgemeinschaft (DFG, German Research Foundation) -- Project number 459964179.}

\date{17 May 2022, updated 31 August 2022 and 02 February 2023}

\subjclass[2010]{Primary: 05D99; Secondary: 05E30, 20C33}

\begin{abstract}
A subset $Y$ of the general linear group $\GL(n,q)$ is called $t$-intersecting if $\rk(x-y)\le n-t$ for all $x,y\in Y$, or equivalently $x$ and $y$ agree pointwise on a $t$-dimensional subspace of $\FF_q^n$ for all $x,y\in Y$. We show that, if $n$ is sufficiently large compared to $t$, the size of every such $t$-intersecting set is at most that of the stabiliser of a basis of a $t$-dimensional subspace of $\FF_q^n$. In case of equality, the characteristic vector of $Y$ is a linear combination of the characteristic vectors of the cosets of these stabilisers. We also give similar results for subsets of $\GL(n,q)$ that intersect not necessarily pointwise in $t$-dimensional subspaces of $\FF_q^n$ and for cross-intersecting subsets of $\GL(n,q)$. These results may be viewed as variants of the classical Erd\H{o}s-Ko-Rado Theorem in extremal set theory and are $q$-analogs of corresponding results known for the symmetric group. Our methods are based on eigenvalue techniques to estimate the size of the largest independent sets in graphs and crucially involve the representation theory of $\GL(n,q)$.
\end{abstract}

\maketitle

\thispagestyle{empty}


\section{Introduction and results}

One of the most famous results in extremal set theory is the Erd\H{o}s-Ko-Rado Theorem~\cite{ErdKoRad1961}. In its strengthened version~\cite{Wil1984} it states that, for all fixed $k$ and $t$ and all sufficiently large $n$, every $t$-intersecting family of $k$-subsets of $\{1,2,\dots,n\}$ has size at most $\binom{n-t}{k-t}$ and equality holds if and only if there are $t$ distinct points of $\{1,2,\dots,n\}$ contained in all members of the family.
\par
There are several analogs of the Erd\H{o}s-Ko-Rado Theorem (see~\cite{GodMea2016}, for example). Most notably, following important earlier work~\cite{DezFra1977},~\cite{CamKu2003},~\cite{LarMal2004},~\cite{GodMar2009}, a corresponding result for the symmetric group $S_n$ was obtained by Ellis, Friedgut, and Pilpel in a landmark paper~\cite{EllFriPil2011}. A subset $Y$ of $S_n$ is \emph{$t$-intersecting} if, for all $x,y\in Y$, there exist distinct $i_1,i_2,\dots,i_t$ in $\{1,2,\dots,n\}$ such that $x(i_k)=y(i_k)$ for all $k$. It was shown in~\cite{EllFriPil2011} that, for each fixed $t$ and all sufficiently large $n$, every $t$-intersecting set in $S_n$ has size at most $(n-t)!$ and equality holds if and only if $Y$ is a coset of the stabiliser of a $t$-tuple of distinct points in $\{1,2,\dots,n\}$. 
\par
In this paper we consider a $q$-analog of this problem, namely we study a corresponding problem for the finite general linear groups. Throughout this paper $q$ is a fixed prime power and $G_n$ denotes the general linear group of degree $n$ over the finite field~$\FF_q$, namely the group of invertible $n\times n$ matrices over $\FF_q$. We say that two elements $x,y\in G_n$ are \emph{$t$-intersecting} if there exist linearly independent elements $u_1,u_2,\dots,u_t$ in $\FF_q^n$ such that $xu_k=yu_k$ for all $k$. Equivalently $x,y\in G_n$ are $t$-intersecting if $\rk(x-y)\le n-t$. A subset~$Y$ of $G_n$ is called \emph{$t$-intersecting} if all pairs in $Y\times Y$ are $t$-intersecting. 
\par
A coset of the stabiliser of a $t$-tuple of linearly independent elements of $\FF_q^n$ has the form
\[
\{g\in G_n:gu_1=v_1,\dots,gu_t=v_t\}
\]
for some $t$-tuples $(u_1,u_2,\dots,u_t)$ and $(v_1,v_2,\dots,v_t)$ of linearly independent elements of $\FF_q^n$. We call such a coset a \emph{$t$-coset}. It is plain that every $t$-coset is $t$-intersecting. Note that the size of a $t$-coset is
\begin{equation}
\prod_{i=t}^{n-1}(q^n-q^i).   \label{eqn:extremal_size}
\end{equation}
The $t$-cosets are however not the only $t$-intersecting sets of this size in $G_n$, as the transpose of every $t$-intersecting set is $t$-intersecting.
\par
We shall often identify a subset~$Y$ of $G_n$ with its characteristic vector $1_Y\in\CC(G_n)$ (where $\CC(G_n)$ is the vector space of functions from $G_n$ to $\CC$). It is well known (see~\cite{AhaAha2014} or~\cite{AhmMea2015}, for example) that, since $G_n$ contains a Singer cycle as a regular subgroup, the size of every $1$-intersecting set in $G_n$ is at most the expression given in~\eqref{eqn:extremal_size} for $t=1$. Meagher and Razafimahatratra~\cite{MeaRaz2021} have shown that, if $Y$ is a $1$-intersecting set of size $q^2-q$ in $G_2$, then $1_Y$ is in the span of the characteristic vectors of the $1$-cosets. We prove a corresponding result for all $t$ and $n$ for which $n$ is sufficiently large compared to $t$.
\begin{theorem}
\label{thm:main_intersection}
Let $t$ be a positive integer and let $Y$ be a $t$-intersecting set in $ G_n$. If~$n$ is sufficiently large compared to $t$, then
\[
\abs{Y}\le \prod_{i=t}^{n-1}(q^n-q^i)
\]
and, in case of equality, $1_Y$ is spanned by the characteristic vectors of $t$-cosets.
\end{theorem}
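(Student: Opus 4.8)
The plan is to recast the statement as an upper bound on the size of an independent set in a weighted Cayley graph on $G_n$, to invoke the ratio bound (Hoffman's bound) together with its equality condition, and to compute the relevant eigenvalues via the representation theory of $G_n$. Write $\CC(G_n) = \bigoplus_{\chi \in \operatorname{Irr}(G_n)} V_\chi \otimes V_\chi^{*}$ for the isotypic decomposition under the two-sided regular action of $G_n \times G_n$. For $j \in \{0,1,\dots,n-1\}$ let $D_j = \{g \in G_n : \dim\ker(g-\1) = j\}$, a union of conjugacy classes, and let $A_j$ be the $G_n \times G_n$ matrix with $(A_j)_{x,y} = 1$ exactly when $x^{-1}y \in D_j$. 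Two elements $x,y$ are $t$-intersecting precisely when $x^{-1}y \in D_j$ for some $j \ge t$, so $1_Y^{\mathsf T} A\, 1_Y = 0$ for every $t$-intersecting set $Y$ and every $A = \sum_{j=0}^{t-1} c_j A_j$ with $c_0,\dots,c_{t-1} \in \RR$. As each $D_j$ is conjugation-invariant, $A_j$ commutes with left and right translations and hence acts on $V_\chi \otimes V_\chi^{*}$ by the scalar $\eta_j(\chi) = \chi(1)^{-1}\sum_{g \in D_j}\chi(g)$; in particular $\eta_j(\1) = \abs{D_j}$, a $q$-analogue of a derangement number.

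Next I would identify the submodule spanned by the extremal families. Let $H \le G_n$ be the stabiliser of a fixed basis $(e_1,\dots,e_t)$ of a $t$-dimensional subspace of $\FF_q^n$, so that $\CC(G_n/H)$ is the (self-dual) permutation module on the set of $t$-tuples of linearly independent vectors, on which $G_n$ acts transitively. Sending a pair of $H$-cosets to the unique $t$-coset they determine realises $W := \langle 1_C : C \text{ a }t\text{-coset}\rangle$ as a $G_n \times G_n$-quotient of $\CC(G_n/H) \otimes \CC(G_n/H)$, from which one obtains $W = \bigoplus_{\chi \in \mathcal S} V_\chi \otimes V_\chi^{*}$, where $\mathcal S$ is the set of irreducible constituents of $\Ind_H^{G_n}\1$. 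Writing $H = U \rtimes G_{n-t}$ with $U$ the unipotent radical of the parabolic of type $(t,n-t)$, Harish-Chandra theory identifies $\Ind_H^{G_n}\1$ with the Harish-Chandra induction of $\CC(G_t) \boxtimes \1_{G_{n-t}}$ from the Levi $G_t \times G_{n-t}$. In particular $\1 \in \mathcal S$, so $\CC\, 1_{G_n} \subseteq W$, and once $n$ is large compared to $t$ the set $\mathcal S$ is described by a bounded amount of data not depending on $n$.

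The heart of the proof is the following eigenvalue lemma: for $n$ sufficiently large compared to $t$, there are weights $c_0,\dots,c_{t-1}$ such that $A = \sum_{j<t} c_j A_j$ satisfies $\eta(\chi) := \sum_{j<t}c_j\eta_j(\chi) = \tau$ for every $\chi \in \mathcal S \setminus \{\1\}$, where $\tau < 0$; $\eta(\chi) > \tau$ for every $\chi \notin \mathcal S$; and $\eta(\1) = d$ with $-\tau/(d-\tau) = \big(\prod_{i=t}^{n-1}(q^n-q^i)\big)\big/\abs{G_n}$. To prove it I would first derive, using Green's character formula (equivalently the Deligne--Lusztig formula) applied to the Harish-Chandra description of the members of $\mathcal S$, asymptotic expansions of $\eta_j(\chi)$ as $n \to \infty$ whose leading terms depend only on the bounded parameter attached to $\chi$. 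Demanding that these leading terms coincide over all $\chi \in \mathcal S \setminus \{\1\}$ is a (heavily overdetermined) homogeneous linear system in the $c_j$; its consistency fixes the $c_j$ and pins down $\tau$ and $d$ up to controlled error, from which the numerical identity follows using the closed form of the $\abs{D_j}$. Finally one separates $\eta(\chi)$ from $\tau$ for the remaining $\chi \notin \mathcal S$ by combining $\abs{\eta_j(\chi)} \le \abs{D_j}$ with lower bounds on $\chi(1)$. Establishing the consistency of this linear system and the separation of the remaining eigenvalues is the main obstacle, and is exactly where largeness of $n$ is needed; it is the $\GL$-analogue of the key estimates in the symmetric-group case treated by Ellis, Friedgut and Pilpel.

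Granting the lemma, the theorem follows quickly. The matrix $A$ is symmetric with constant row sum $d = \eta(\1) = \sum_{j<t}c_j\abs{D_j}$, and $d > 0 > \tau$ while $\tau = \min_\chi \eta(\chi)$ is its least eigenvalue; since $1_Y^{\mathsf T} A\, 1_Y = 0$ for $t$-intersecting $Y$, the ratio bound gives $\abs{Y} \le \abs{G_n}\cdot\frac{-\tau}{d-\tau} = \prod_{i=t}^{n-1}(q^n-q^i)$. In case of equality, the equality condition of the ratio bound forces $1_Y - (\abs{Y}/\abs{G_n})\,1_{G_n}$ to lie in the $\tau$-eigenspace of $A$, which by the lemma equals $\bigoplus_{\chi \in \mathcal S \setminus \{\1\}} V_\chi \otimes V_\chi^{*}$, a subspace of $W$; since $1_{G_n} \in W$ as well, $1_Y \in W = \langle 1_C : C \text{ a }t\text{-coset}\rangle$, as claimed.
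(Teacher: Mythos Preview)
Your global architecture matches the paper's: recast $t$-intersection as independence in a conjugation-invariant weighted Cayley graph, apply the weighted Hoffman bound, and identify the span $W$ of the $t$-coset indicators with $\bigoplus_{\chi\in\mathcal S}V_\chi\otimes V_\chi^*$ where $\mathcal S$ is the set of irreducible constituents of $\Ind_H^{G_n}\1$. Your description of $\mathcal S$ via Harish-Chandra induction is correct and agrees with the paper's Lemma~2.5, and your final paragraph deducing the theorem from the eigenvalue lemma is exactly the paper's Section~5 endgame (including Theorem~5.4).

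The gap is in the eigenvalue lemma itself, specifically in your choice of connection set. You weight only the $t$ sets $D_0,\dots,D_{t-1}$, giving $t$ real parameters. But $|\mathcal S\setminus\{\1\}|=\sum_{k=1}^{t}|\Lambda_k|$, which already equals $q-1$ for $t=1$ and grows rapidly with $t$ and $q$. Since the $t$-cosets themselves are extremal and their indicators span $W$, tightness of Hoffman forces \emph{every} $\chi\in\mathcal S\setminus\{\1\}$ to lie in the minimum eigenspace; thus the equalities $\eta(\chi)=\tau$ must hold exactly, not just to leading order. You therefore face a genuine linear system of $|\mathcal S|-1$ equations in $t$ unknowns, and you give no argument for its consistency beyond analogy. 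The analogy is moreover inaccurate: Ellis--Friedgut--Pilpel do \emph{not} weight the fixed-point-count classes of $S_n$; they weight individual cycle-type classes containing an $(n-k)$-cycle for $k\le t$, precisely so that the relevant block of the character table is square and invertible. The same obstruction arises here: for $t=1$ and $q\ge 3$ the characters $\chi^{X-1\mapsto(n-1,1)}$ and $\chi^{\ulambda}$ with $\ulambda(X-1)=(n-1)$, $\ulambda(X-\alpha)=(1)$ have different degrees but must receive the same eigenvalue, and a single scalar $c_0$ cannot arrange this.

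The paper's remedy is to weight a much richer family of symmetrised conjugacy classes $\Sigma_{\le t}$ --- those whose canonical form contains an irreducible block of size $\ge n-t$ --- so that $|\Omega_n\cap\Sigma_{\le t}|=|\Omega_n\cap\Pi_{\le t}|$ and the corresponding block $Q_t$ of the character table is square, invertible, and independent of $n$ (Proposition~4.1). After solving this square system (Proposition~5.1), a separate argument using the twisted permutation characters $\zeta^{(t,i)}$ is still required to show that the resulting weights vanish on the few classes in $\Sigma_t$ that \emph{do} fix a $t$-frame, so that the weighted graph is genuinely supported on $t$-derangements. Your sets $D_j$ are too coarse to carry out either step.
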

\par
We also prove a result on cross-intersecting subsets of $G_n$. Two subsets $Y$ and $Z$ are \emph{$t$-cross-intersecting} if all pairs in $Y\times Z$ are $t$-intersecting.
\begin{theorem}
\label{thm:main_cross_intersection}
Let $t$ be a positive integer and let $Y$ and $Z$ be $t$-cross-intersecting sets in~$ G_n$. If~$n$ is sufficiently large compared to $t$, then
\[
\sqrt{\abs{Y}\cdot\abs{Z}}\le \prod_{i=t}^{n-1}(q^n-q^i)
\]
and, in case of equality, $1_Y$ and $1_Z$ are spanned by the characteristic vectors of $t$-cosets.
\end{theorem}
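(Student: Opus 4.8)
The plan is to prove Theorem~\ref{thm:main_cross_intersection} by a cross‑intersecting analogue of the Hoffman ratio bound applied to a weighted Cayley graph on $G_n$, with Theorem~\ref{thm:main_intersection} emerging as the special case $Y=Z$.

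\emph{Spectral set-up.} Since $\rk(x-y)=\rk(I-x^{-1}y)$ depends only on the conjugacy class of $x^{-1}y$, the relations $R_r=\{(x,y)\in G_n\times G_n:\rk(x-y)=r\}$ form a fusion of the conjugacy-class association scheme of $G_n$; hence the $0/1$ matrices $A_r$ with $(A_r)_{x,y}=\1[(x,y)\in R_r]$ commute and are simultaneously diagonalised by the orthogonal decomposition $\CC(G_n)=\bigoplus_\chi W_\chi$ into isotypic components for the two-sided translation action, $\chi$ running over the irreducible characters of $G_n$; the eigenvalue of $A_r$ on $W_\chi$ is $\omega_r^\chi=\chi(1)^{-1}\sum_{\rk(g-I)=r}\chi(g)$. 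I would fix real numbers $c_{n-t+1},\dots,c_n$ and set $A=\sum_{r=n-t+1}^{n}c_r A_r$; then $A_{x,y}=0$ whenever $x,y$ are $t$-intersecting, the eigenvalue of $A$ on $W_\chi$ is $\theta_\chi=\sum_r c_r\omega_r^\chi$, and on the all-ones vector $\mathbf 1=1_{G_n}$ it is $d:=\sum_r c_r\,\abs{\{g\in G_n:\rk(g-I)=r\}}$.

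\emph{Choosing the coefficients; the main obstacle.} Put $M=\prod_{i=0}^{t-1}(q^n-q^i)$, so that $N:=\abs{G_n}$ gives $N/M=\prod_{i=t}^{n-1}(q^n-q^i)$, the common size of a $t$-coset. Every $t$-coset is a left coset of the stabiliser $H$ of a $t$-tuple of linearly independent vectors; as these stabilisers form one conjugacy class of subgroups, the span $U\subseteq\CC(G_n)$ of the characteristic vectors of all $t$-cosets is a two-sided submodule, and writing $\Sigma$ for the set of irreducible constituents of the permutation module $\CC(G_n/H)$ one has $U=\bigoplus_{\chi\in\Sigma}W_\chi$. The crux is to show that, for $n$ sufficiently large relative to $t$, the $c_r$ can be chosen (essentially uniquely up to scaling) so that: (i)~$A$ acts as a single negative scalar $\tau$ on $V_1:=U\ominus\ang{\mathbf 1}$, i.e.\ $\theta_\chi=\tau<0$ for every nontrivial $\chi\in\Sigma$, with $d>0$; and (ii)~$\abs{\theta_\chi}<\abs{\tau}$ for every $\chi\notin\Sigma$, i.e.\ $A|_{V_2}$ has operator norm strictly below $\abs\tau$, where $V_2:=U^\perp$. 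Part (i) should follow from the stable-range structure of $\CC(G_n/H)$ (its constituents organised into finitely many ``levels'' on which the relevant character sums agree, together with a non-degeneracy of the resulting linear system in the $c_r$); part (ii) is a uniform bound on the sums $\omega_r^\chi$ over the remaining ``off-diagonal'' characters. Establishing (i) and, above all, (ii)---where Green's description of $\mathrm{Irr}(G_n)$ and the largeness of $n$ enter decisively---is the step I expect to be the main difficulty. Note that once (i) holds, $\tau$ is forced: for any $t$-coset $C$ all pairs inside $C$ are $t$-intersecting, so $1_C^{\mathsf T}A\,1_C=0$, while $1_C=\tfrac1M\mathbf 1+v$ with $v\in V_1$ and $\norm v^2=\norm{1_C}^2-\tfrac{N}{M^2}=\tfrac{N(M-1)}{M^2}$; hence $0=\tfrac{dN}{M^2}+\tau\tfrac{N(M-1)}{M^2}$, so $d+\tau(M-1)=0$ and $\tfrac{-\tau}{d-\tau}=\tfrac1M$.

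\emph{The ratio bound and equality.} Let $Y,Z$ be $t$-cross-intersecting (we may assume both nonempty); put $\alpha=\abs Y/N$, $\beta=\abs Z/N$ and decompose $1_Y=\alpha\mathbf 1+y_1+y_2$ and $1_Z=\beta\mathbf 1+z_1+z_2$ along $\ang{\mathbf 1}\oplus V_1\oplus V_2$. Since every pair in $Y\times Z$ is $t$-intersecting and $A$ vanishes there, $1_Y^{\mathsf T}A\,1_Z=0$, and expanding over the mutually orthogonal $A$-invariant summands gives $0=d\,\alpha\beta N+\tau\ang{y_1,z_1}+y_2^{\mathsf T}A\,z_2$. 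Using $\abs{\ang{y_1,z_1}}\le\norm{y_1}\norm{z_1}$, $\bigabs{y_2^{\mathsf T}A\,z_2}\le\norm{A|_{V_2}}\norm{y_2}\norm{z_2}<\abs\tau\,\norm{y_2}\norm{z_2}$, then Cauchy--Schwarz on $(\norm{y_1},\norm{y_2})$ and $(\norm{z_1},\norm{z_2})$, and $\norm{y_1}^2+\norm{y_2}^2=\abs Y-\abs Y^2/N$ (similarly for $Z$), one obtains
\[
d\,\alpha\beta N\le\abs\tau\,N\sqrt{\alpha(1-\alpha)\,\beta(1-\beta)},\qquad\text{hence}\qquad\frac{\alpha\beta}{(1-\alpha)(1-\beta)}\le\frac{\tau^2}{d^2}.
\]
The left side increases in each of $\alpha,\beta\in(0,1)$ and, on a level set, is largest when $\alpha=\beta$, so this forces $\sqrt{\alpha\beta}\le\dfrac{\abs\tau}{d+\abs\tau}=\dfrac{-\tau}{d-\tau}=\dfrac1M$, i.e.\ $\sqrt{\abs Y\cdot\abs Z}\le N/M=\prod_{i=t}^{n-1}(q^n-q^i)$. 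In case of equality every inequality above is an equality and the optimisation gives $\alpha=\beta=1/M$; the strict bound in (ii) then forces $\norm{y_2}\norm{z_2}=0$, and equality in Cauchy--Schwarz forces $(\norm{y_1},\norm{y_2})$ and $(\norm{z_1},\norm{z_2})$ to be proportional, whence $y_2=z_2=0$ (the remaining possibility $y_1=y_2=0$ would make $1_Y=\tfrac1M\mathbf 1$, impossible as $M\ge 2$). Thus $1_Y,1_Z\in\ang{\mathbf 1}\oplus V_1=U$, which is exactly the assertion that they lie in the span of the characteristic vectors of $t$-cosets. Taking $Y=Z$ and applying the classical Hoffman ratio bound to the coclique $Y$ of the weighted graph $A$ recovers Theorem~\ref{thm:main_intersection} from the same matrix.
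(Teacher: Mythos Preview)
Your overall strategy—weighted Hoffman bound on a Cayley graph supported on $t$-derangements, with the equality span identified as $U=\bigoplus_{\chi\in\Sigma}W_\chi$—matches the paper's, and your cross-intersecting endgame (including the computation forcing $-\tau/(d-\tau)=1/M$) is correct and is essentially Proposition~\ref{pro:Hoffman}(ii). The gap is in the construction of $A$.

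You weight only the $t$ rank classes $A_{n-t+1},\dots,A_n$, so you have $t$ free parameters, yet condition (i) demands that $\theta_\chi$ take a common value across all nontrivial $\chi\in\Sigma$. Already for $t=1$ there are $q-1$ such constituents (namely $\chi^{1\mapsto(n-1,1)}$ and the characters $\chi^{\ulambda}$ with $\ulambda(1)=(n-1)$, $\ulambda(X-a)=(1)$ for $a\ne 1$), so the system is overdetermined as soon as $q>2$, and for general $t$ the disparity only grows. Your suggestion that the $\omega_r^\chi$ are constant on ``levels'' is precisely what would need to be proved, and there is no mechanism forcing it: even within a single level $\Pi_{k,0}$ the characters take different values on individual conjugacy classes contributing to a given rank (for instance, on a regular elliptic class one has $\chi^{1\mapsto(n-1,1)}=-1$ while $\chi^{\ulambda}=0$ for $\ulambda(1)=(n-1)$, $\ulambda(X-a)=(1)$). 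The analogous obstruction is why Ellis--Friedgut--Pilpel weight by full cycle type rather than by fixed-point count in~$S_n$.

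The paper circumvents this by discarding rank classes in favour of a hand-picked family $\Sigma_{\le t}$ of individual conjugacy classes, each containing a single irreducible companion block of degree $\ge n-t$, with one weight per class. The number of such classes equals $|\Pi_{\le t}|$, and the core technical result (Proposition~\ref{pro:matrix_full_rank}) is that the resulting square piece $Q_t$ of the character table is invertible and independent of~$n$. This gives enough freedom to prescribe the eigenvalue to be exactly $\eta$ on every nontrivial $\chi$ with $\ulambda(1)_1\ge n-t$ \emph{and} simultaneously $0$ on the nearby characters with $\ulambda(\alpha^i)_1\ge n-t$, $i\ne0$ (Proposition~\ref{pro:equation_system}); only after those are pinned down does a Cauchy--Schwarz/degree estimate dispose of the remaining characters (Lemma~\ref{lem:remaining_eigenvalues}). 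Your identification of $U$ with the span of $t$-coset indicators, stated as known, is Theorem~\ref{thm:Ut_span}.
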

\par
Theorems~\ref{thm:main_intersection} and~\ref{thm:main_cross_intersection} may be seen as $q$-analogs of~\cite[Thm.~5 and~6]{EllFriPil2011}. It seems plausible that corresponding $q$-analogs of~\cite[Thm.~3 and~4]{EllFriPil2011} also hold. In the case of $t$-intersecting sets, this means that the extremal $t$-intersecting sets in $G_n$ are the $t$-cosets and their transposes whenever~$n$ is sufficiently large compared to $t$. In fact, Ahanjideh~\cite{Aha2022} has shown that every $1$-intersecting set in $G_2$ of size $q^2-q$ must be either a $1$-coset or the transpose of a $1$-coset. We therefore pose the following conjectures.
\begin{conjecture}
\label{con:t-intersecting}
Let $Y$ be a $t$-intersecting set in $G_n$ whose size meets the bound in Theorem~\ref{thm:main_intersection}. If $n$ is sufficiently large compared to $t$, then $Y$ or $Y^T$ is a $t$-coset.
\end{conjecture}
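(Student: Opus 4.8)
The plan is to realise $t$-intersecting sets as independent sets in a Cayley graph on $G_n$ and to apply the ratio (Hoffman) bound, evaluating the eigenvalues that occur through the representation theory of $G_n$; the hypothesis that $n$ be large will enter only in the spectral estimate.

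First I would reformulate the problem. Since $\rk(x-y)=\rk(I-x^{-1}y)$ depends only on the conjugacy class of $x^{-1}y$, a subset $Y\subseteq G_n$ is $t$-intersecting exactly when $Y$ is an independent set in the normal Cayley graph on $G_n$ with connection set $\{g\in G_n:\rk(I-g)>n-t\}$, a union of conjugacy classes. More flexibly, for nonnegative weights $w_{n-t+1},\dots,w_n$ put $A=\sum_{r>n-t}w_rD_r$, where $D_r$ has $(x,y)$-entry $1$ exactly when $\rk(x-y)=r$; then $A$ is a pseudo-adjacency matrix of this graph (supported on its edges, with constant row sum $d$), and since each $D_r$ is the right convolution by the central element $\sum_{\rk(I-g)=r}g$ of $\CC(G_n)$, it acts on the $\rho$-isotypic component of $\CC(G_n)$ as the scalar $\eta_\rho=\sum_rw_r\eta_\rho^{(r)}$, where $\eta_\rho^{(r)}=\chi_\rho(1)^{-1}\sum_{\rk(I-g)=r}\chi_\rho(g)$, with multiplicity $\chi_\rho(1)^2$, for every $\rho\in\operatorname{Irr}(G_n)$. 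The span $V\subseteq\CC(G_n)$ of the characteristic vectors of the $t$-cosets is a $G_n\times G_n$-submodule, hence $V=\bigoplus_{\rho\in\mathcal L_t}(\rho\text{-isotypic})$, where $\mathcal L_t$ is the set of irreducibles occurring in the permutation module $\CC(\Omega_t)$ on the set $\Omega_t$ of $t$-tuples of linearly independent vectors of $\FF_q^n$; by the classical decomposition of these Harish-Chandra induced modules (the $q$-analogue of the corresponding fact for $S_n$, where $\mathcal L_t$ becomes the set of $\lambda$ with $\lambda_1\ge n-t$), $\mathcal L_t$ consists of the irreducibles of $G_n$ of ``level at most $t$'', a set of size bounded in terms of $t$.

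Writing $|K|:=\prod_{i=0}^{t-1}(q^n-q^i)=|G_n|/\prod_{i=t}^{n-1}(q^n-q^i)$, the heart of the matter is to exhibit weights $w_r\ge0$ for which every nontrivial eigenvalue satisfies $\eta_\rho\ge-d/(|K|-1)$, with strict inequality for all $\rho\notin\mathcal L_t$. Granting this, the least eigenvalue of $A$ is exactly $-d/(|K|-1)$: it is at least this by construction, and at most this because the $t$-coset is itself an independent set of size $|G_n|/|K|$, so the ratio bound applied to it forces $\lambda_{\min}\le-d/(|K|-1)$. Hence the ratio bound gives $|Y|\le|G_n|\cdot\frac{-\lambda_{\min}}{d-\lambda_{\min}}=|G_n|/|K|=\prod_{i=t}^{n-1}(q^n-q^i)$, and in the case of equality $1_Y-\tfrac{|Y|}{|G_n|}\1$ lies in the $\lambda_{\min}$-eigenspace, which by the strict inequality is contained in $\bigoplus_{\rho\in\mathcal L_t}(\rho\text{-isotypic})=V$; therefore $1_Y\in V$, which is the assertion of the theorem. (The bound alone, for every $n$, also follows from the clique-coclique bound, since the invertible elements of an $\FF_q$-linear MRD code in $\FF_q^{n\times n}$ of minimum rank distance $n-t+1$ -- for instance a Gabidulin code -- number $\prod_{i=0}^{t-1}(q^n-q^i)=|K|$ and any two of them differ by a matrix of rank exceeding $n-t$, so they form a clique of size $|K|$.)

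Carrying this out is the real work, and the only place where ``$n$ sufficiently large'' is used. I would first evaluate $\eta_\rho^{(r)}$ for $\rho\in\mathcal L_t$ in closed form -- these are the eigenvalues of the operators $D_r$ on $\CC(\Omega_t)$, governed by the commutative Hecke algebra of the pair $(G_n,H_t)$ with $H_t=\{g\in G_n:ge_1=e_1,\dots,ge_t=e_t\}$ and expressible through $q$-analogues of classical intersection numbers -- and then choose the weights (an explicit family, the $q$-analogue of the extremal weighting used for $S_n$) so that the eigenvalues attached to $\mathcal L_t$ meet the threshold $-d/(|K|-1)$ without undershooting it. The hard part is the uniform estimate $\eta_\rho>-d/(|K|-1)$ for every $\rho\notin\mathcal L_t$: there are exponentially many such irreducibles, indexed by Green's combinatorial data, and one needs character bounds of the form $|\chi_\rho(g)|\le c\,\chi_\rho(1)^{\theta}$ with $\theta<1$ decreasing as $\rk(I-g)$ and the level of $\rho$ grow, strong enough that the normalised character sums defining $\eta_\rho$ stay strictly above the threshold for every irreducible of level larger than $t$ once $n$ is large compared to $t$. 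Feeding in the known degree and character estimates for $\GL(n,q)$, together with the fact that the contribution of the high-level $\rho$ to $\eta_\rho$ is suppressed by powers of $q^n$, should complete the proof; making these estimates uniform over all of $\operatorname{Irr}(G_n)$, and thereby pinning down how large $n$ must be, is where I expect the principal difficulty to lie.
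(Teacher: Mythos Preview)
Your proposal does not address the statement in question. What you have outlined is a proof of Theorem~\ref{thm:main_intersection}: you reach the conclusion ``$1_Y\in V$'', i.e.\ that the characteristic vector of an extremal $t$-intersecting set lies in the span of the characteristic vectors of the $t$-cosets. But the statement you were asked to prove is Conjecture~\ref{con:t-intersecting}, which asserts the much stronger conclusion that $Y$ or $Y^T$ \emph{is} a $t$-coset. The paper itself does not prove this; it is explicitly posed as a conjecture, and the paper only remarks that it was subsequently established by Ellis, Kindler, and Lifshitz using methods entirely different from the spectral/representation-theoretic approach.

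The gap between ``$1_Y$ lies in the span of $t$-coset indicators'' and ``$Y$ or $Y^T$ is a $t$-coset'' is not a formality. The span $V$ is a large subspace (of dimension $\sum_{\rho\in\mathcal L_t}\chi_\rho(1)^2$), and a priori many $\{0,1\}$-vectors of the right Hamming weight could lie in it without being indicators of single $t$-cosets; moreover the transposes of $t$-cosets also have their indicators in $V$, so any argument must somehow separate these two families and rule out everything else. In the symmetric-group setting this ``uniqueness'' step required substantial additional combinatorial work beyond the Hoffman bound, and the same is true here. Nothing in your outline---the Cayley-graph reformulation, the eigenvalue computation on $\mathcal L_t$, the character bounds for $\rho\notin\mathcal L_t$---touches this step; all of it is aimed at, and only yields, the weaker conclusion of Theorem~\ref{thm:main_intersection}. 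To prove the conjecture along these lines you would need a further structural argument identifying the Boolean vectors in $V$ of the extremal weight, and you have not proposed one.
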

\par
\begin{conjecture}
Let $Y$and $Z$ be $t$-cross-intersecting sets in $G_n$ whose sizes meet the bound in Theorem~\ref{thm:main_cross_intersection}. If $n$ is sufficiently large compared to $t$, then $Y=Z$ and $Y$ or~$Y^T$ is a $t$-coset.
\end{conjecture}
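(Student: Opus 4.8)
The plan is to adapt to $G_n$ the spectral (eigenvalue) argument used for the symmetric group by Ellis, Friedgut and Pilpel~\cite{EllFriPil2011}. Since $\rk(x-y)=\rk(I-x^{-1}y)$, two elements of $G_n$ fail to be $t$-intersecting exactly when $x^{-1}y\in S$, where $S=\{g\in G_n:\rk(I-g)>n-t\}$ is a union of conjugacy classes; let $\Gamma$ be the corresponding Cayley graph on $G_n$. Then $\Gamma$ is undirected (as $\rk(I-g^{-1})=\rk(I-g)$) and $\abs S$-regular, and, $S$ being conjugation-invariant, its adjacency operator acts as the scalar $\lambda_\pi=\tfrac{1}{\dim V_\pi}\sum_{g\in S}\chi_\pi(g)$ on the $\pi$-isotypic component $V_\pi\boxtimes V_\pi^*$ of $\CC(G_n)$ under the two-sided $G_n\times G_n$-action, for every $\pi\in\mathrm{Irr}(G_n)$. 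A pair $(Y,Z)$ is $t$-cross-intersecting precisely when no edge of $\Gamma$ joins $Y$ to $Z$, so the cross-intersecting version of the Hoffman ratio bound gives, provided $\lambda_{\min}$ is the eigenvalue of $\Gamma$ of largest absolute value among those different from $\abs S$ and is the only eigenvalue attaining that value,
\[
\sqrt{\abs Y\cdot\abs Z}\le\abs{G_n}\cdot\frac{-\lambda_{\min}}{\abs S-\lambda_{\min}},
\]
with equality forcing $\abs Y=\abs Z$ and $1_Y-\tfrac{\abs Y}{\abs{G_n}}\1=1_Z-\tfrac{\abs Z}{\abs{G_n}}\1$ to lie in the $\lambda_{\min}$-eigenspace. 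Writing $X_j$ for the set of ordered $j$-tuples of linearly independent vectors of $\FF_q^n$, so that $\abs{X_t}=\prod_{i=0}^{t-1}(q^n-q^i)$, it therefore suffices to prove that $\lambda_{\min}=-\abs S/(\abs{X_t}-1)$ — the value for which the right-hand side above equals $\prod_{i=t}^{n-1}(q^n-q^i)$ — and that the $\lambda_{\min}$-eigenspace lies in the span $U_t$ of the characteristic vectors of the $t$-cosets.

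The second statement is quick: a $t$-coset is a left--right translate of the pointwise stabiliser $P_t$ of a fixed element of $X_t$, so by Peter--Weyl together with Frobenius reciprocity — the multiplicity of $\pi$ in the permutation module $\CC(X_t)$ equals $\dim V_\pi^{P_t}$ — one gets $U_t=\bigoplus_{\pi\preceq\CC(X_t)}V_\pi\boxtimes V_\pi^*$, and in particular $\1\in U_t$. For the first, let $\psi_j$ be the permutation character of $G_n$ on $X_j$, so $\psi_j(g)$ counts the elements of $X_j$ fixed by $g$; since $\psi_j(g)$ depends only on $\rk(I-g)$ and vanishes unless $\rk(I-g)\le n-j$, the class functions $\psi_t,\psi_{t+1},\dots,\psi_n$ form a basis of the span of the indicators of the rank classes $\{g:\rk(I-g)=n-c\}$ with $c\ge t$, whence $\1_{G_n\setminus S}=\sum_{j=t}^{n}\delta_j\psi_j$ for certain coefficients $\delta_j=\delta_j(q)$. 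As convolution by $\psi_j$ acts on $V_\pi\boxtimes V_\pi^*$ as the scalar $\abs{G_n}\,\dim V_\pi^{P_j}/\dim V_\pi$, this yields $\lambda_\pi=-\tfrac{\abs{G_n}}{\dim V_\pi}\sum_{j\ge t}\delta_j\dim V_\pi^{P_j}$ for every $\pi\neq\1$, and a finite evaluation of this over the constituents of $\CC(X_t)$ shows their minimum to be exactly $-\abs S/(\abs{X_t}-1)$. Granting the spectral-gap assertion, the equality discussion of the first paragraph then gives $\abs Y=\abs Z=\prod_{i=t}^{n-1}(q^n-q^i)$ and $1_Y,1_Z\in\langle\1\rangle\oplus(U_t\ominus\langle\1\rangle)=U_t$; and since $1_Y-\tfrac{\abs Y}{\abs{G_n}}\1=1_Z-\tfrac{\abs Z}{\abs{G_n}}\1$ while $\abs Y=\abs Z$, in fact $Y=Z$.

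The main obstacle is the spectral-gap assertion itself: that for $n$ large compared to $t$, no irreducible $\pi$ lying outside $\CC(X_t)$ has $\abs{\lambda_\pi}\ge\abs S/(\abs{X_t}-1)$. For such a $\pi$ the multiplicities $\dim V_\pi^{P_j}$ vanish for every $j$ below the least $m$ with $\pi\preceq\CC(X_m)$, and this $m$ is at least $t+1$, so $\abs{\lambda_\pi}$ is at most $\abs{G_n}$ times a multiplicity that is polynomially bounded in $q$ (and bounded in terms of $t$ at the first few levels) divided by $\dim V_\pi$; since an irreducible of $G_n$ first occurring in $\CC(X_{t+1})$ has dimension at least $q^{(t+1)(n-t-1)}$ up to lower-order factors, while $\abs S/(\abs{X_t}-1)$ is of order $\abs{G_n}\,q^{-tn}$, the required inequality holds once $n\gtrsim(t+1)^2$, and deeper levels are suppressed still more strongly. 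Making this precise and uniform in $q$ — cataloguing the irreducibles of $G_n$ by their level along the chain $\CC(X_0)\subseteq\CC(X_1)\subseteq\cdots\subseteq\CC(X_n)=\CC(G_n)$, bounding the coefficients $\delta_j$, and controlling the resulting magnitudes of the $\lambda_\pi$ — is where I expect the real difficulty to lie, and it is here, just as in the symmetric-group treatment of~\cite{EllFriPil2011}, that the hypothesis ``$n$ sufficiently large compared to $t$'' is consumed.
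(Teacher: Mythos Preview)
The statement you are attempting to prove is a \emph{conjecture} in the paper, not a theorem; the paper does not prove it, so there is no ``paper's proof'' to compare against. What the paper does prove is Theorem~\ref{thm:main_cross_intersection}: in the equality case, $1_Y$ and $1_Z$ lie in the span $U_t$ of the characteristic vectors of $t$-cosets. Your argument, even granting every spectral claim, reaches exactly this point together with the additional observation $Y=Z$ (which is correct: equality in the cross version of Hoffman with a unique negative extremal eigenvalue forces $|Y|=|Z|$ via AM--GM and then $1_Y-\tfrac{|Y|}{|G_n|}\1=1_Z-\tfrac{|Z|}{|G_n|}\1$ via Cauchy--Schwarz). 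But the conjecture asks for much more: that $Y$ or $Y^T$ is a $t$-coset. You never address this. Passing from ``$1_Y\in U_t$ and $Y$ has the right size'' to ``$Y$ is a $t$-coset or its transpose'' is precisely the difficult combinatorial step --- the analogue of the Boolean/dictatorship analysis in \cite{EllFriPil2011} --- and it is entirely absent from your proposal. Everything you wrote is (at best) a route to Theorem~\ref{thm:main_cross_intersection} plus the easy addendum $Y=Z$; it is not a proof of the conjecture.

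There is also a secondary gap in what you did write. You work with the \emph{unweighted} Cayley graph on the full set $S=\{g:\rk(I-g)>n-t\}$ and assert that ``a finite evaluation'' shows $\lambda_{\min}=-|S|/(|X_t|-1)$. This is not justified, and experience with $S_n$ in \cite{EllFriPil2011} (and the structure of the present paper) strongly suggests it is false for $t\ge 2$: the paper deliberately restricts to a carefully chosen subset $\Sigma_{\le t}$ of conjugacy classes and assigns \emph{weights} $w(\usigma)$ (Proposition~\ref{pro:equation_system}) precisely in order to force the non-trivial eigenvalues on $\Pi_{\le t}$ to equal~$\eta$ and to make the rest strictly smaller in modulus. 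Your formula $\lambda_\pi=-\tfrac{|G_n|}{\dim V_\pi}\sum_{j\ge t}\delta_j\dim V_\pi^{P_j}$ gives no reason for all constituents of $\CC(X_t)$ to share a common eigenvalue, let alone the specific value $-|S|/(|X_t|-1)$; without that, the Hoffman equality case does not place $1_Y$ in $U_t$.
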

\par
A subset $Y$ of the symmetric group $S_n$ is \emph{$t$-set-intersecting} if, for all $x,y\in Y$, there is a subset $I$ of $\{1,2,\dots,n\}$ containing $t$ elements such that $x(I)=y(I)$. It was shown in~\cite{Ell2012} that, for each fixed $t$ and all sufficiently large $n$, every $t$-set-intersecting set in $S_n$ has size at most $t!(n-t)!$ and equality holds if and only if $Y$ is a coset of the stabiliser of a subset of $\{1,2,\dots,n\}$ containing $t$ elements. 
\par
We also obtain a $q$-analog of this result. We say that two elements $x,y\in G_n$ are \emph{$t$-space-intersecting} if there exists a $t$-dimensional subspace $U$ of $\FF_q^n$ (or \emph{$t$-space} for short) such that $xU=yU$. A subset~$Y$ of $G_n$ is called \emph{$t$-space-intersecting} if all pairs in $Y\times Y$ are $t$-space-intersecting. Of course in this context it would be more natural to replace~$G_n$ by the projective linear group $\PGL(n,q)$. However results for~$G_n$ and for $\PGL(n,q)$ can be easily translated into each other and for consistency we prefer to work with~$G_n$. A coset of the stabiliser in $G_n$ of a $t$-space is clearly $t$-space-intersecting and has order
\begin{equation}
\Bigg[\prod_{i=0}^{t-1}(q^t-q^i)\Bigg]\Bigg[\prod_{i=t}^{n-1}(q^n-q^i)\Bigg].   \label{eqn:size_stab_space}
\end{equation}
Note that again the transpose of a $t$-space-intersecting set is $t$-space-intersecting. The transpose of the stabiliser of a $t$-space is in fact the stabiliser of an $(n-t)$-space, so the stabiliser of an $(n-t)$-space is an example of a $t$-space-intersecting set that has the same size as that of the stabiliser of a $t$-space.
\par
Using an argument involving a Singer cycle, similarly as that above, Meagher and Spiga~\cite{MeaSpi2011} have shown that the size of every $1$-space-intersecting set in $G_n$ is at most the expression given in~\eqref{eqn:size_stab_space} for $t=1$. We show that this is true for all $t$ and all sufficiently large $n$.
\begin{theorem}
\label{thm:main_intersection_space}
Let $t$ be a positive integer and let $Y$ be a $t$-space-intersecting set in~$G_n$. If~$n$ is sufficiently large compared to $t$, then
\[
\abs{Y}\le \Bigg[\prod_{i=0}^{t-1}(q^t-q^i)\Bigg]\Bigg[\prod_{i=t}^{n-1}(q^n-q^i)\Bigg]
\]
and, in case of equality, $1_Y$ is spanned by the characteristic vectors of cosets of stabilisers of $t$-spaces.
\end{theorem}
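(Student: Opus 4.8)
The plan is to transfer the problem to a coclique problem in a Cayley graph on $G_n$ and apply the ratio (Hoffman) bound, following the strategy used for Theorem~\ref{thm:main_intersection}. I would let $S$ be the set of all $g\in G_n$ stabilising no $t$-space and let $\Gamma$ be the Cayley graph on $G_n$ with connection set $S$. Since $g$ stabilises a $t$-space if and only if $g^{-1}$ does, and conjugation permutes the $t$-spaces, the set $S$ is closed under inversion and conjugation; hence $\Gamma$ is an undirected normal Cayley graph, it is connected (as $S$ generates $G_n$, already containing a Singer cycle), and a subset $Y\subseteq G_n$ is $t$-space-intersecting exactly when it is a coclique of $\Gamma$ (indeed $x\sim y$ iff $xU\neq yU$ for every $t$-space $U$, i.e.\ iff $x^{-1}y\in S$). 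Because $S$ is a union of conjugacy classes, the eigenvalues of $\Gamma$ are $\eta_\chi=\frac{1}{\chi(1)}\sum_{g\in S}\chi(g)$, indexed by the irreducible characters $\chi$ of $G_n$, with eigenspaces the isotypic components of $\CC(G_n)$, and the valency is $\eta_{\1}=\abs S$.

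Next I would identify the target value. Write $N=\abs{G_n}$, let $P$ be the stabiliser of a $t$-space, so $N/\abs P=\qbin nt$ is the number of $t$-spaces, and set $\tau^{*}=-\abs S/(\qbin nt-1)$. A direct computation shows that if $\tau^{*}$ is the least eigenvalue of $\Gamma$, then the ratio bound gives $\abs Y\le N\cdot\frac{-\tau^{*}}{\abs S-\tau^{*}}=N/\qbin nt=\abs P$, which is the bound in the statement; moreover, since $x\mapsto\frac{-x}{\abs S-x}$ is decreasing for $x<0$, it suffices to prove $\eta_\chi\ge\tau^{*}$ for every $\chi$. So the crux is the eigenvalue estimate: for $n$ sufficiently large compared to $t$, $\eta_\chi\ge\tau^{*}$ for all irreducible $\chi$, with equality among the non-trivial characters precisely for the unipotent characters $\psi_1,\dots,\psi_t$ of $G_n$ corresponding to the two-row partitions $(n-1,1),\dots,(n-t,t)$. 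These $\psi_j$, together with the trivial character $\psi_0$, are exactly the irreducible constituents of the permutation module $\CC(G_n/P)$ on the Grassmannian of $t$-spaces, each with multiplicity one, since $(G_n,P)$ is a Gelfand pair and, for $n\ge 2t$, the associated Grassmann (or $q$-Johnson) scheme has $t+1$ classes. Restricted to $\CC(G_n/P)$ the adjacency operator of $\Gamma$ lies in the Bose--Mesner algebra of that scheme, so each $\eta_{\psi_j}$ is the eigenvalue of an explicit combination of the scheme relations on the $j$-th eigenspace, and one checks that it equals $\tau^{*}$. For every other non-trivial $\chi$ I would bound $\bigabs{\sum_{g\in S}\chi(g)}$ from above and $\chi(1)$ from below, using the character-value and degree estimates for $G_n$ assembled for Theorem~\ref{thm:main_intersection}, to obtain $\abs{\eta_\chi}<\abs{\tau^{*}}$ once $n$ is large. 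I expect this last step---controlling $\eta_\chi$ uniformly over all "large" irreducible characters of $G_n$, exploiting that $S$ avoids every element with an invariant $t$-space---to carry essentially all of the difficulty.

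Granting the eigenvalue estimate, the remainder is routine. The bound is immediate, and it is tight because $P$ itself is a $t$-space-intersecting set of size $\abs P$. For the equality case, suppose $\abs Y=\abs P$. Then equality holds throughout the ratio bound, which forces $\tau^{*}$ to be the least eigenvalue of $\Gamma$ and $1_Y-\frac{\abs Y}{N}\1$ to lie in the $\tau^{*}$-eigenspace, which by the estimate is the sum of the $\psi_j$-isotypic components of $\CC(G_n)$ for $j=1,\dots,t$; adding a suitable multiple of $\1$ (spanning the $\psi_0$-isotypic component) shows that $1_Y\in\bigoplus_{j=0}^{t}$ (isotypic component of $\psi_j$). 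Finally I would identify this subspace with the span of the characteristic vectors of cosets of stabilisers of $t$-spaces: such cosets are precisely the sets $\{g\in G_n:gU=V\}$ as $U,V$ range over $t$-spaces, and under the two-sided action of $G_n\times G_n$ on $\CC(G_n)$ the span of their characteristic vectors is the submodule generated by $1_P$, which by Frobenius reciprocity equals $\bigoplus_{j=0}^{t}V_{\psi_j}\otimes V_{\psi_j}^{*}$ inside $\CC(G_n)\cong\bigoplus_\chi V_\chi\otimes V_\chi^{*}$---exactly the isotypic space found above. Hence $1_Y$ is a linear combination of characteristic vectors of cosets of stabilisers of $t$-spaces, as required.
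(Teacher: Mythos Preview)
Your plan has a genuine gap at the step you treat as routine. You apply the \emph{unweighted} Hoffman bound to the Cayley graph with connection set $S=\{g\in G_n:g\text{ fixes no }t\text{-space}\}$ and assert that, for each $j=1,\dots,t$, the eigenvalue $\eta_{\psi_j}$ attached to the two-row unipotent character $\psi_j=\chi^{X-1\mapsto(n-j,j)}$ equals $\tau^*=-\abs{S}/(\qbin{n}{t}-1)$. It is true that the restriction of the adjacency operator to $\CC(G_n/P)$ lies in the Bose--Mesner algebra of the Grassmann scheme, but the only identity one extracts from the vanishing of the permutation character on $S$ is
\[
\sum_{j=1}^{t}\psi_j(1)\,\eta_{\psi_j}=-\abs{S},
\]
which says that $\tau^*$ is the \emph{degree-weighted average} of the $\eta_{\psi_j}$, not their common value. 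For $t\ge 2$ the $\eta_{\psi_j}$ are in general distinct: already for $n=4$, $t=2$ there are elements of $S$ fixing exactly one $1$-space (take $g$ semisimple with characteristic polynomial $(X-a)f$, $f$ irreducible of degree~$3$), so $\psi_1$ is not constant on $S$ and the restricted operator is not a scalar multiple of $J-I$. If some $\eta_{\psi_j}<\tau^*$ then the unweighted Hoffman bound is strictly weaker than $\abs{P}$ and the argument fails. You also leave uncontrolled all the other small-degree characters---those $\chi^{\ulambda}$ with $\ulambda\in\Pi_{\le t-1}$ but $\ulambda(1)\neq(n-s,s)$, for instance the linear characters $g\mapsto\theta(\det g)^i$; for these $\chi(1)$ is far too small for your degree estimate to yield $\abs{\eta_\chi}<\abs{\tau^*}$.

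This is exactly why the paper does \emph{not} use the full connection set with equal weights. Instead it applies the weighted form of Proposition~\ref{pro:Hoffman} to a small, hand-picked family of conjugacy classes $\Sigma_{\le t-1}$ built from regular elliptic blocks, and then \emph{constructs} a weight vector $w$ by inverting an $n$-independent submatrix $Q_{t-1}$ of the character table (Propositions~\ref{pro:matrix_full_rank} and~\ref{pro:equation_system_space}). The weights are chosen precisely so that the weighted eigenvalue is forced to equal $\varepsilon$ on every $\psi_j$ and to vanish on all remaining $\ulambda\in\Pi_{\le t-1}$; Lemma~\ref{lem:remaining_eigenvalues_space_t} then disposes of the residual characters in $\Pi_t$ before the degree bound handles the rest. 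The invertibility of $Q_{t-1}$ and the construction of $w$ are where the real work lies, and your outline has no substitute for them---you have placed the difficulty in the wrong step.
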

\par
Again, we have a corresponding result on cross-intersecting subsets of $G_n$, in which we call two subsets $Y$ and $Z$ of $G_n$ \emph{$t$-space-cross-intersecting} if all pairs in $Y\times Z$ are $t$-space-intersecting. 
\begin{theorem}
\label{thm:main_cross_intersection_space}
Let $t$ be a positive integer and let $Y$ and $Z$ be $t$-space-cross-intersecting sets in~$ G_n$. If~$n$ is sufficiently large compared to $t$, then
\[
\sqrt{\abs{Y}\cdot\abs{Z}}\le \Bigg[\prod_{i=0}^{t-1}(q^t-q^i)\Bigg]\Bigg[\prod_{i=t}^{n-1}(q^n-q^i)\Bigg]
\]
and, in case of equality, $1_Y$ and $1_Z$ are spanned by the characteristic vectors of cosets of stabilisers of $t$-spaces.
\end{theorem}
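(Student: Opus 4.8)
The plan is to reformulate Theorem~\ref{thm:main_cross_intersection_space} as a statement about cross-independent sets in a Cayley graph and then to apply an eigenvalue bound, following the same route as for Theorem~\ref{thm:main_intersection_space}. Two elements $x,y\in G_n$ are $t$-space-intersecting exactly when $y^{-1}x$ has an invariant $t$-space, so $Y$ and $Z$ are $t$-space-cross-intersecting precisely when they are cross-independent in the Cayley graph $\Gamma_n$ on $G_n$ with connection set
\[
S=\{g\in G_n:\ gU\ne U\ \text{for every }t\text{-space }U\}.
\]
Since $S$ is closed under inversion and under conjugation, $\Gamma_n$ is a normal Cayley graph, and it is connected for $n$ large. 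Decomposing $\CC(G_n)=\bigoplus_\chi V_\chi$ into isotypic components indexed by the irreducible characters $\chi$ of $G_n$, the adjacency operator of $\Gamma_n$ acts on $V_\chi$ as the scalar $\eta_\chi=\frac{1}{\chi(1)}\sum_{s\in S}\chi(s)$, and $d:=\abs{S}=\eta_{\1}$ is the valency.

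Next I would invoke a cross-intersecting form of Hoffman's ratio bound: if $\Gamma$ is a connected $d$-regular graph on $N$ vertices all of whose adjacency eigenvalues other than the simple eigenvalue $d$ lie in $[-\tau,\tau]$ with $\tau\in(0,d)$, then any cross-independent sets $Y,Z$ satisfy $\sqrt{\abs{Y}\cdot\abs{Z}}\le\tfrac{\tau}{d+\tau}N$, with equality only if $\abs{Y}=\abs{Z}$ and $\1_Y,\1_Z$ lie in the span of $\1$ together with the eigenspaces for $\tau$ and $-\tau$. I would prove this directly: expanding $\1_Y,\1_Z$ in an orthonormal eigenbasis containing $\1/\sqrt N$, the relation $\ang{\1_Y,A\1_Z}=0$ gives $\sum_{i\ge 2}\mu_i a_i b_i=-d\abs{Y}\abs{Z}/N$; bounding the left-hand side below by $-\tau\sum_{i\ge2}\abs{a_ib_i}$ and then by $-\tau\,\norm{\1_Y-\tfrac{\abs Y}{N}\1}\cdot\norm{\1_Z-\tfrac{\abs Z}{N}\1}$ via Cauchy--Schwarz, and finally using the elementary inequality $(1-\abs{Y}/N)(1-\abs{Z}/N)\le(1-\sqrt{\abs{Y}\abs{Z}}/N)^2$, yields the bound, while the equality discussion is read off from the cases of equality in these steps.

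With this in hand, Theorem~\ref{thm:main_cross_intersection_space} reduces to the spectral input that forms the core of the proof of Theorem~\ref{thm:main_intersection_space}: (i) $d$ is a simple eigenvalue of $\Gamma_n$; (ii) the least eigenvalue of $\Gamma_n$ equals $-\tau$, where $\tau=d\,\abs{H}/(\abs{G_n}-\abs{H})$ for $H$ the stabiliser of a $t$-space, so that $\tfrac{\tau}{d+\tau}\abs{G_n}=\abs{H}$, which is the quantity $\bigl[\prod_{i=0}^{t-1}(q^t-q^i)\bigr]\bigl[\prod_{i=t}^{n-1}(q^n-q^i)\bigr]$ of \eqref{eqn:size_stab_space}; (iii) every eigenvalue of $\Gamma_n$ other than $d$ lies in $[-\tau,\tau]$; and (iv) $\CC\1+\sum_{\abs{\eta_\chi}=\tau}V_\chi$ equals the span $\mathcal U$ of the characteristic vectors of all cosets of stabilisers of $t$-spaces. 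Granting (i)--(iii), the bound follows at once; and if equality holds, the cross ratio bound places $\1_Y$ and $\1_Z$ in $\CC\1+\sum_{\abs{\eta_\chi}=\tau}V_\chi$, which equals $\mathcal U$ by (iv) (note $\1\in\mathcal U$, being the sum of the characteristic vectors of the $\abs{G_n}/\abs{H}$ cosets of a fixed $H$), giving precisely the assertion about $\1_Y$ and $\1_Z$.

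Thus everything rests on (i)--(iv), which is the representation-theoretic heart and is common to both space-intersecting theorems: one identifies the constituents of the Grassmannian permutation module $\CC(G_n/H)$ (the unipotent characters $\chi^{(n-j,j)}$ for $0\le j\le t$), evaluates $\eta_\chi$ on this short list accurately enough to locate the minimum $-\tau$ and to see that the extreme value $\pm\tau$ is attained exactly on the nontrivial such constituents, and bounds $\abs{\eta_\chi}$ uniformly for all remaining $\chi$ by exploiting that $\chi(1)$ is then large compared with the number of elements of $G_n$ having no invariant $t$-space. This uniform bound on the $\eta_\chi$ over the whole character table of $G_n$, valid as $n\to\infty$, is the principal obstacle; once it is secured, the cross-intersecting statement costs no more than the elementary ratio bound above.
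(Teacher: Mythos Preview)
Your reduction to a cross-independent-set problem and your derivation of a cross-version of Hoffman's bound are fine. The gap is in the spectral input. You work with the \emph{unweighted} Cayley graph $\Gamma_n$ whose connection set is the full set $S$ of elements fixing no $t$-space, and you assert that (ii) its least eigenvalue is exactly $-\tau$, (iii) all nontrivial eigenvalues lie in $[-\tau,\tau]$, and (iv) the characters attaining $\pm\tau$ are precisely the $\chi^{X-1\mapsto(n-j,j)}$ for $1\le j\le t$. None of this is what the proof of Theorem~\ref{thm:main_intersection_space} establishes, and for $t\ge 2$ there is no reason to expect (iii) or (iv) to hold for the unweighted graph: the $t$ eigenvalues $\eta_{\chi^{X-1\mapsto(n-j,j)}}$ need not all coincide with $-\tau$, nor need the remaining $\eta_\chi$ be bounded by $\tau$ in absolute value. (For $t=1$ the single relation $\sum_{g\in S}\xi^{X-1\mapsto(n-1,1)}(g)=0$ does pin down $\eta_{\chi^{X-1\mapsto(n-1,1)}}=-\tau$, but for $t\ge 2$ this kind of argument yields only one linear relation among $t$ unknowns.) This is exactly why, already in the symmetric-group setting of Ellis--Friedgut--Pilpel, the unweighted $t$-derangement graph is abandoned in favour of a weighted version.

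The paper proceeds differently. It does not use $\Gamma_n$ at all, but the much smaller Cayley graph with adjacency matrix $\sum_{\usigma\in\Omega_n\cap\Sigma_{\le t-1}}A_{\usigma}$, built only from conjugacy classes containing a regular elliptic block of degree larger than $n-t$. It then applies the \emph{weighted} bound of Proposition~\ref{pro:Hoffman}(ii): weights $w(\usigma)$ are constructed (Proposition~\ref{pro:equation_system_space}) so that the weighted eigenvalue $P(\ulambda)=\sum_{\usigma}w(\usigma)P(\ulambda,\usigma)$ is \emph{forced} to equal $\varepsilon$ on every $\ulambda$ with $\ulambda(1)=(n-s,s)$, $1\le s\le t$, and is shown to be strictly smaller in modulus elsewhere (Lemmas~\ref{lem:remaining_eigenvalues} and~\ref{lem:remaining_eigenvalues_space_t}). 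The cross-intersecting statement then drops out of Proposition~\ref{pro:Hoffman}(ii) using the very same weights and the identification of $W_t$ with the span of the coset indicators; no separate spectral work is needed. Your argument becomes correct if you replace the unweighted ratio bound by this weighted one and invoke the construction of $w$; as written, claims (ii)--(iv) are the entire difficulty rather than a shared ingredient you may take for granted.
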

\par
Meagher and Spiga~\cite{MeaSpi2011} conjectured that the extremal $1$-space-intersecting sets in~$G_n$ must be cosets of the stabiliser of a $1$-space or cosets of the stabiliser of an $(n-1)$-space. This was proved by the same authors for $n=2$ ~\cite{MeaSpi2011} and $n=3$~\cite{MeaSpi2014} and by Spiga for all $n\ge 4$~\cite{Spi2019}. We therefore pose the following conjectures.
\begin{conjecture}
Let $Y$ be a $t$-space-intersecting set in $G_n$ whose size meets the bound in Theorem~\ref{thm:main_intersection_space}. If $n$ is sufficiently large compared to $t$, then $Y$ is a coset of the stabiliser of a $t$-space or a coset of the stabiliser of an $(n-t)$-space.
\end{conjecture}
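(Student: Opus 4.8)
The plan is to combine the ``low-degree'' information supplied by Theorem~\ref{thm:main_intersection_space} with a rigidity argument in the spirit of Ellis \cite{Ell2012} and Ellis--Friedgut--Pilpel \cite{EllFriPil2011}. Assume $Y$ is $t$-space-intersecting with $\abs{Y}$ equal to the bound in Theorem~\ref{thm:main_intersection_space}; that theorem tells us that $1_Y$ lies in $W$, the span of the characteristic vectors of cosets of stabilisers of $t$-spaces. Since $t$-space-intersection is preserved under both left and right translation, the natural setting is the action of $G_n\times G_n$ on $\CC(G_n)$, and with respect to it $W=\bigoplus_{\rho\in\Lambda}V_\rho\otimes V_\rho^{*}$, where $\Lambda$ is the $(t+1)$-element set of irreducible constituents of the permutation module of $G_n$ on $t$-spaces. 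The permutation modules of $G_n$ on $t$-spaces and on $(n-t)$-spaces have the same constituents, so $W$ is closed under transpose and also contains the characteristic vectors of cosets of stabilisers of $(n-t)$-spaces. It therefore suffices to prove the following purely combinatorial statement: for $n$ sufficiently large compared to $t$, the only $\{0,1\}$-valued vectors in $W$ whose coordinate sum equals \eqref{eqn:size_stab_space} are the characteristic vectors of cosets of stabilisers of $t$-spaces and of cosets of stabilisers of $(n-t)$-spaces.

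The heart of the matter is a junta-type theorem for $G_n$: for $n$ large compared to $t$, every $\{0,1\}$-valued $f\in W$ should be controlled by bounded-dimensional linear-algebraic data, in the precise sense that $Y$ is a union of cosets of the stabiliser of a flag $0\subseteq A\subseteq B\subseteq\FF_q^n$ in which $A$ has dimension at most $C$ and $B$ has codimension at most $C$, for some constant $C=C(t,q)$. To establish this one would adapt the Fourier-analytic method of \cite{EllFriPil2011,Ell2012}: the Fourier transform of $f$ is supported on the finitely many ``small'' irreducibles in $\Lambda$, so a suitable hypercontractive (level-$d$) inequality on $G_n$, relative to an appropriate analog of the noise operator, should force $f$ to be $L^2$-close to a genuine junta, and a stability step should then upgrade closeness to equality. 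I expect this to be the principal obstacle: a Friedgut-type junta theorem for $\GL(n,q)$ of the required strength is not available in the literature, which is presumably why the conjecture is still open. (A plausible alternative that would sidestep hypercontractivity is a ``spread approximation'' argument, but such a method has not yet been developed for $\GL(n,q)$.)

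Granting the junta reduction, what remains is a finite extremal problem. Among unions of cosets of stabilisers of flags $0\subseteq A\subseteq B$ with $\dim A$ and the codimension of $B$ at most $C$, one must single out those that are $t$-space-intersecting and attain the size in \eqref{eqn:size_stab_space}. For such highly structured families the requirement ``$gU=hU$ for some $t$-space $U$'' becomes a low-rank/incidence condition on the finitely many parameters defining the family, and a direct computation should leave only the stabiliser of a $t$-space (the case $B=\FF_q^n$, $\dim A=t$) and the stabiliser of an $(n-t)$-space (the case $A=0$, $\dim B=n-t$), together with their cosets. For $t=1$ this last step is, in effect, the content of the analyses of Meagher--Spiga \cite{MeaSpi2011,MeaSpi2014} and Spiga \cite{Spi2019}, which provide a template; for general $t$ one needs a new but elementary argument of the same flavour, which seems most naturally organised as an induction that removes one dimension from $A$ (or adds one dimension to $B$) at a time and re-applies Theorem~\ref{thm:main_intersection_space} inside $G_{n-1}$.
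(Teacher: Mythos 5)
The statement you are addressing is presented in the paper as an open \emph{conjecture}, not a theorem, so there is no proof in the paper to compare against. Your write-up is, by your own admission, a research plan rather than a proof, and the plan has a genuine and decisive gap that you yourself identify: the Friedgut-type ``junta'' theorem for $\GL(n,q)$ on which the whole reduction rests does not yet exist. Without it, the step from ``$1_Y$ lies in the finitely-spanned module $W_t$'' to ``$Y$ is a union of cosets of a flag stabiliser of bounded complexity'' is unsupported. The further step — the finite extremal analysis singling out $t$-space stabilisers and $(n-t)$-space stabilisers among all such unions — is likewise only gestured at, and for general $t$ it is far from elementary (even for $t=1$ it required the separate papers of Meagher--Spiga and Spiga). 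So as a proof this is incomplete on two counts.

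A few smaller remarks. Your observation that $W_t$ is the span of $V_{\ulambda}$ for $\ulambda$ with $\ulambda(1)=(n-s,s)$, $0\le s\le t$, and that this space is preserved under $g\mapsto g^T$ (hence contains the characteristic vectors of cosets of $(n-t)$-space stabilisers as well) is correct and consistent with~\eqref{eqn:DecompositionPermutationChar} and the isomorphism of the $t$-space and $(n-t)$-space permutation modules. The reuse of the symbol $\Lambda$ for the $(t+1)$-element index set clashes with the paper's use of $\Lambda$ for maps $\Phi\to\Par$; change the name. Finally, note that the paper records that Ellis, Kindler, and Lifshitz~\cite{EllKinLif2022} resolved the pointwise analogue (Conjecture~\ref{con:t-intersecting}) by methods not relying on representation theory of $G_n$; this is evidence that your Boolean-function/junta route is plausible in spirit, but the $t$-space-intersecting case you are asked about is not claimed to be resolved there, and your proposal does not close that gap.
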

\par
\begin{conjecture}
Let $Y$and $Z$ be $t$-space-cross-intersecting sets in $G_n$ whose sizes meet the bound in Theorem~\ref{thm:main_cross_intersection_space}. If $n$ is sufficiently large compared to $t$, then $Y=Z$ and $Y$ is the stabiliser of a $t$-space or the stabiliser of an $(n-t)$-space.
\end{conjecture}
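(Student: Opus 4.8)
The plan is to recast $t$-space-cross-intersecting pairs as cross-independent sets in a Cayley graph on $G_n$ and to run an eigenvalue argument parallel to the one behind Theorem~\ref{thm:main_intersection_space}, just as Theorem~\ref{thm:main_cross_intersection} parallels Theorem~\ref{thm:main_intersection}. Two elements $x,y\in G_n$ are $t$-space-intersecting exactly when $x^{-1}y$ stabilises some $t$-space, so let $S$ be the (conjugation- and inversion-closed) set of elements of $G_n$ stabilising no $t$-space, and for a conjugacy class $C\subseteq S$ let $A_C$ be the adjacency matrix of the Cayley graph on $G_n$ with connection set $C$. Any matrix $A=\sum_{C\subseteq S}w_C A_C$ with $w_C\ge 0$ is symmetric and entrywise nonnegative with constant row sum $d=\sum_C w_C\abs C$; its eigenvalues are the numbers $\eta_\chi=\frac{1}{\chi(1)}\sum_C w_C\sum_{g\in C}\chi(g)$, one for each irreducible character $\chi$ of $G_n$, with $\eta_\chi$ occurring on the $\chi$-isotypic subspace of the regular representation (of dimension $\chi(1)^2$); and $\ang{1_Y,A1_Z}=0$ if and only if $Y$ and $Z$ are $t$-space-cross-intersecting. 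As in the proof of Theorem~\ref{thm:main_intersection_space} one chooses the weights $w_C$ so that the least eigenvalue $\tau=\min_\chi\eta_\chi$ strictly dominates every other eigenvalue in absolute value and so that $\frac{-\tau}{d-\tau}\,\abs{G_n}$ is exactly the product in the statement.

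The main tool is the cross-intersecting form of the Hoffman ratio bound (the same one used in the proof of Theorem~\ref{thm:main_cross_intersection}): if $A$ is symmetric with constant row sum $d$, least eigenvalue $\tau<0$, and $\abs\theta\le-\tau$ for every eigenvalue $\theta\ne d$, then $\ang{1_Y,A1_Z}=0$ forces $\sqrt{\abs Y\,\abs Z}\le\frac{-\tau}{d-\tau}\abs{G_n}$. This is proved by expanding $0=\ang{1_Y,A1_Z}$ over the eigenspaces of $A$, separating the all-ones contribution $\frac{d\abs Y\,\abs Z}{\abs{G_n}}$, bounding the remaining part in absolute value by $-\tau$ times a product of norms via Cauchy--Schwarz, and finishing with $\sqrt{(1-\alpha)(1-\beta)}\le1-\sqrt{\alpha\beta}$ for $\alpha=\abs Y/\abs{G_n}$ and $\beta=\abs Z/\abs{G_n}$. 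Applied to the matrix $A$ above this yields the asserted inequality, and it is attained since the stabiliser of a $t$-space is $t$-space-intersecting of the stated size (hence cross-intersecting with itself).

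For the equality case one runs the inequalities in reverse: equality in $\sqrt{(1-\alpha)(1-\beta)}\le1-\sqrt{\alpha\beta}$ forces $\abs Y=\abs Z$, and equality in the Cauchy--Schwarz steps, combined with strict domination by $-\tau$, forces both $1_Y-\frac{\abs Y}{\abs{G_n}}1_{G_n}$ and $1_Z-\frac{\abs Z}{\abs{G_n}}1_{G_n}$ into the $\tau$-eigenspace $U$ of $A$. It remains to identify $U$. This is the representation-theoretic heart: $U$ is the sum of the $\chi$-isotypic subspaces of the regular representation over those $\chi$ with $\eta_\chi=\tau$, and the claim, established for $n$ large compared to $t$ in the analysis behind Theorem~\ref{thm:main_intersection_space}, is that these $\chi$ are exactly the $t$ nontrivial unipotent characters indexed by the two-row partitions $(n-j,j)$, $1\le j\le t$ --- equivalently, the nontrivial irreducible constituents of the permutation module of $G_n$ on the set of $t$-spaces. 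Granting this, $U$ together with $1_{G_n}$ (the trivial constituent) lies in the span of the characteristic vectors of cosets of stabilisers of $t$-spaces: for a fixed $t$-space $W_0$ the indicators of the sets $\{g\in G_n:gW_0=W\}$, $W$ a $t$-space, already span a copy of that permutation module, and letting $W_0$ vary and taking left translates fills out the corresponding $G_n\times G_n$-submodule of $\CC(G_n)$. Hence in the equality case $1_Y$ and $1_Z$ lie in this span. (The same submodule is spanned by the characteristic vectors of cosets of stabilisers of $(n-t)$-spaces, consistent with those also being extremal.)

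The only genuinely new ingredient relative to Theorem~\ref{thm:main_intersection_space} is the cross-intersecting ratio bound and a careful reading of its equality case; the real difficulty is inherited. That difficulty is the estimation of the normalised character values $\chi(g)/\chi(1)$ for $g\in S$: one needs them sharply enough to pin down $\tau$ exactly, to show $\tau$ is attained only on the characters indexed by $(n-j,j)$ with $1\le j\le t$, and to choose the weights $w_C$ so that $-\tau$ strictly dominates the remaining eigenvalues. This is where the hypothesis that $n$ be sufficiently large compared to $t$ enters, via bounds on the degrees and character values of $\GL(n,q)$ expressed through the combinatorial data labelling the irreducible characters.
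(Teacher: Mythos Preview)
The statement you are addressing is a \emph{conjecture}, not a theorem: the paper explicitly leaves it open and provides no proof. What you have written is essentially a reproof of Theorem~\ref{thm:main_cross_intersection_space}, not a proof of the conjecture. Your argument establishes (i) the bound $\sqrt{\abs{Y}\,\abs{Z}}\le\big[\prod_{i=0}^{t-1}(q^t-q^i)\big]\big[\prod_{i=t}^{n-1}(q^n-q^i)\big]$, (ii) that equality forces $\abs{Y}=\abs{Z}$, and (iii) that in the equality case $1_Y$ and $1_Z$ lie in the span of the characteristic vectors of cosets of stabilisers of $t$-spaces. All three of these are already contained in Theorem~\ref{thm:main_cross_intersection_space} together with the routine observation that equality in $\sqrt{(1-\alpha)(1-\beta)}\le 1-\sqrt{\alpha\beta}$ forces $\alpha=\beta$.

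The conjecture, however, asserts something substantially stronger: that $Y=Z$ as sets (not merely $\abs{Y}=\abs{Z}$), and that $Y$ is a single coset of the stabiliser of a $t$-space or of an $(n-t)$-space. Passing from ``$1_Y$ lies in the span of coset indicators'' to ``$Y$ \emph{is} one such coset'' is precisely the hard combinatorial step that the paper does not carry out and that remains open for $t\ge 2$; in the $1$-space case it required separate work of Meagher--Spiga and Spiga. Your proposal contains no argument whatsoever for this step, nor for the identification $Y=Z$. The eigenvalue machinery you invoke can only pin down the linear span in which $1_Y$ and $1_Z$ must live; it cannot by itself distinguish a genuine coset from other $\{0,1\}$-vectors in that span.

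A smaller point: you stipulate $w_C\ge 0$, but the weights produced in the paper (Proposition~\ref{pro:equation_system_space}) are not known to be nonnegative, and the weighted Hoffman bound in Proposition~\ref{pro:Hoffman} does not require nonnegativity. This does not affect the inequality or its equality case, but your framing is slightly off.
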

\par
Not surprisingly, as in~\cite{EllFriPil2011} and~\cite{Ell2012}, our proofs are based on eigenvalue techniques, in particular weighted versions of the Hoffman bound on independent sets in graphs, and crucially involve the representation theory of $G_n$. We organise this paper as follows. In Section~\ref{sec:GL} we summarise relevant background on the representation theory of $G_n$. In Section~\ref{sec:association_schemes} we recall versions of the Hoffman bound from~\cite{EllFriPil2011} and explain how they can be applied in our setting. In Section~\ref{sec:matrix} we prepare some key steps of the proofs of our main results and in particular study properties of a matrix related to the character table of $G_n$. Sections~\ref{sec:proofs_points} and~\ref{sec:proofs_spaces} contain the main arguments of our proofs 
of Theorems~\ref{thm:main_intersection} and~\ref{thm:main_cross_intersection} and Theorems~\ref{thm:main_intersection_space} and~\ref{thm:main_cross_intersection_space}, respectively. In Section~\ref{sec:estimates} we prove some auxiliary ingredients used in our proofs.
\par
We close this introduction by noting that, after a first version of this paper was made publically available, Ellis, Kindler, and Lifshitz~\cite{EllKinLif2022} independently proved a result that is slightly more general than Theorem~\ref{thm:main_intersection} and also proved Conjecture~\ref{con:t-intersecting}. Their methods are completely different compared to ours and in particular make no use of the representation theory of $G_n$.


\section{The finite general linear groups}
\label{sec:GL}

In this section we mostly recall some relevant facts about the conjugacy classes and the character theory of $ G_n$.

\subsection{Partitions}

An (integer) \emph{partition} is a sequence $\lambda=(\lambda_1,\lambda_2,\dots)$ of nonnegative integers satisfying $\lambda_1\ge\lambda_2\ge\cdots$. The set of partitions is denoted by $\Par$. We often omit trailing zeros and write $\lambda=(\lambda_1,\lambda_2,\dots,\lambda_k)$ if $\lambda_k>0$ and $\lambda_{k+1}=0$. The \emph{size} of $(\lambda_1,\lambda_2,\dots)$ is defined to be $\abs{\lambda}=\lambda_1+\lambda_2+\cdots$. If $\abs{\lambda}=n$, then we also say that $\lambda$ is a partition of $n$. We denote the unique partition of $0$ by $\varnothing$. 
\par
The \emph{Young diagram} of a partition $(\lambda_1,\lambda_2,\dots,\lambda_k)$ of $n$ is an array of~$n$ boxes with left-justified rows and top-justified columns, where row $i$ contains $\lambda_i$ boxes. To each partition $\lambda$ belongs a \emph{conjugate partition} $\lambda'$ whose parts are the number of boxes in the columns of the Young diagram of $\lambda$. For two partitions $\lambda=(\lambda_1,\lambda_2,\dots)$ and $\mu=(\mu_1,\mu_2,\dots)$ of the same size, we say that $\lambda$ \emph{dominates} $\mu$ and write $\lambda\unrhd\mu$ if
\[
\sum_{i=1}^k\lambda_i\ge\sum_{i=1}^k\mu_i\quad\text{for each $k\ge 1$}.
\]
This indeed defines a partial order on the set of partitions of a fixed size, which is called the \emph{dominance order}.


\subsection{Conjugacy classes}

We shall now describe the conjugacy classes of $ G_n$ (see~\cite[Ch.~IV,\S~3]{Mac1979}, for example). Let $\Phi$ be the set of monic irreducible polynomials in $\FF_q[X]$ distinct from $X$. For $a\in\FF_q^*$ (where $\FF_q^*$ is the multiplicative group of $\FF_q$), we shall often write $a$ instead of $X-a$ when the meaning is clear from the context. We also write $\abs{f}$ for the degree of $f\in\Phi$. Let $\Lambda$ be the set of mappings $\ulambda\colon\Phi\to\Par$ of finite support (with $\varnothing$ being the zero element in $\Par$). We define the \emph{size} of such a mapping to be
\[
\norm{\ulambda}=\sum_{f\in\Phi}\abs{\ulambda(f)}\cdot\abs{f}
\]
and put $\Lambda_n=\{\ulambda\in\Lambda:\norm{\ulambda}=n\}$. The \emph{companion matrix} of $f\in\Phi$ with $f=X^d+f_{d-1}X^{d-1}+\cdots+f_1X+f_0$ is
\[
C(f)=\begin{bmatrix}
  &   &  &  & -f_0\\
1 &  & & & -f_1\\
  & 1 & & & -f_2\\
  &    & \ddots & & \vdots\\
  & & & 1 & -f_{d-1}\\
\end{bmatrix}\in\FF_q^{d\times d}
\]
(where blanks are filled with zeros). For $f\in\Phi$ of degree $d$ and a positive integer $k$, we write
\[
C(f,k)=\begin{bmatrix}
C(f) & I &   & & \\
  & C(f) & I & & \\
&&\ddots&\ddots\\
  &   &   & \ddots &  I\\
  &   &   & &  C(f) 
\end{bmatrix}\in\FF_q^{kd\times kd},
\]
where $I$ is an identity matrix of the appropriate size. For $f\in\Phi$ and $\sigma\in\Par$, we define $C(f,\sigma)$ to be the block diagonal matrix of order $\abs{\sigma}\cdot\abs{f}$ with blocks $C(f,\sigma_1),C(f,\sigma_2),\dots$. Finally, with every $\usigma\in\Lambda_n$ we associate the block diagonal matrix $R_{\usigma}$ of order $n$ whose blocks are $C(f,\usigma(f))$, where~$f$ ranges through the support of $\usigma$. Then every element $g$ of $ G_n$ is conjugate to exactly one matrix $R_{\usigma}$ for $\usigma\in\Lambda_n$, which is called the \emph{Jordan canonical form} of $g$. Hence~$\Lambda_n$ indexes the conjugacy classes of $ G_n$; we denote by $C_{\usigma}$ the conjugacy class containing~$R_{\usigma}$. The following result gives an explicit expression for the number of elements in $C_{\usigma}$.
\begin{lemma}[{\cite[Thm.~1.10.7]{Sta2012}}]
\label{lem:sizes_cc}
For each $\usigma\in\Lambda_n$, we have
\[
\frac{\abs{ G_n}}{\abs{C_{\usigma}}}=\prod_{f\in\Phi} \prod_{i=1}^{\abs{\usigma(f)}} \prod_{j=1}^{m_i(\usigma(f))} q^{\abs{f}\, s_i(\usigma(f)')} (1-q^{-\abs{f}\,j}),
\]
where $m_i(\sigma)= \abs{\{j\ge 1\colon\sigma_j=i\}}$ and $s_i(\sigma)=\sum_{j=1}^i \sigma_j$ for a partition $\sigma$.
\end{lemma}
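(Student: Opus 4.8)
The plan is to rewrite $\abs{G_n}/\abs{C_{\usigma}}$ as the order of the centraliser of $R_{\usigma}$ and to evaluate that centraliser through the structure theory of finite modules over a discrete valuation ring. First, by the orbit-stabiliser theorem for the conjugation action of $G_n$ on itself, $\abs{G_n}/\abs{C_{\usigma}}$ equals the order of the centraliser $C_{G_n}(R_{\usigma})$. Make $V=\FF_q^n$ into a (finite torsion) $\FF_q[X]$-module by letting $X$ act as $R_{\usigma}$. A matrix in $G_n$ commutes with $R_{\usigma}$ precisely when, regarded as an $\FF_q$-linear endomorphism of $V$, it is an $\FF_q[X]$-module automorphism of $V$; hence $C_{G_n}(R_{\usigma})=\mathrm{Aut}_{\FF_q[X]}(V)$.

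Next I would pass to primary components. As a finite torsion $\FF_q[X]$-module, $V=\bigoplus_f V_f$, where $f$ ranges over the support of $\usigma$ (equivalently, over the monic irreducible divisors of the characteristic polynomial of $R_{\usigma}$) and $V_f$ is the $f$-primary part; every automorphism preserves each $V_f$, so $\mathrm{Aut}_{\FF_q[X]}(V)=\prod_f\mathrm{Aut}_{\FF_q[X]}(V_f)$. Fix such an $f$, set $d=\abs{f}$ and $\lambda=\usigma(f)$, and let $\mathfrak{o}_f=\FF_q[X]_{(f)}$, a discrete valuation ring with maximal ideal $(f)\mathfrak{o}_f$ and residue field $\FF_q[X]/(f)$ of size $Q:=q^d$. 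The block $C(f,k)$ describes multiplication by $X$ on $\FF_q[X]/(f^k)$, and $C(f,\usigma(f))$ is block-diagonal with blocks $C(f,\lambda_1),C(f,\lambda_2),\dots$, so $V_f\cong\bigoplus_i\FF_q[X]/(f^{\lambda_i})$; since $V_f$ is $f$-primary it is naturally an $\mathfrak{o}_f$-module, namely $\bigoplus_i\mathfrak{o}_f/(f)^{\lambda_i}$, i.e.\ a finite $\mathfrak{o}_f$-module of type $\lambda$, and $\mathrm{Aut}_{\FF_q[X]}(V_f)=\mathrm{Aut}_{\mathfrak{o}_f}(V_f)$.

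The crux is then the classical count of automorphisms of a finite module of type $\lambda$ over a discrete valuation ring with residue field of size $Q$ (Macdonald~\cite[Ch.~II, \S1]{Mac1979}):
\[
\bigabs{\mathrm{Aut}_{\mathfrak{o}_f}(V_f)}=Q^{\sum_i(\lambda_i')^2}\prod_{i\ge1}\prod_{j=1}^{m_i(\lambda)}(1-Q^{-j}).
\]
One proves this by counting the units of the endomorphism ring of $V_f\cong\bigoplus_k(\mathfrak{o}_f/(f)^k)^{m_k(\lambda)}$: as a set this ring is a product of the groups $\mathrm{Hom}_{\mathfrak{o}_f}(\mathfrak{o}_f/(f)^l,\mathfrak{o}_f/(f)^k)^{m_k(\lambda)\times m_l(\lambda)}$, of total cardinality $Q^{\sum_{k,l}m_k(\lambda)m_l(\lambda)\min(k,l)}$, while reduction modulo its Jacobson radical maps it onto $\prod_k M_{m_k(\lambda)}(\FF_Q)$, so the proportion of invertible elements is $\prod_{i\ge1}\prod_{j=1}^{m_i(\lambda)}(1-Q^{-j})$; together with the identity $\sum_{k,l}m_k(\lambda)m_l(\lambda)\min(k,l)=\sum_i(\lambda_i')^2$ this gives the displayed formula. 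This step carries the genuine content; the remaining steps are bookkeeping.

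Finally I would substitute $Q=q^d$ and use
\[
\sum_i(\lambda_i')^2=\sum_{k,l}m_k(\lambda)m_l(\lambda)\min(k,l)=\sum_{i\ge1}m_i(\lambda)\,s_i(\lambda'),
\]
which holds because $s_i(\lambda')=\sum_{j=1}^i\lambda_j'=\sum_l m_l(\lambda)\min(i,l)$; this recasts the previous display as $\prod_{i=1}^{\abs{\lambda}}\prod_{j=1}^{m_i(\lambda)}q^{d\,s_i(\lambda')}(1-q^{-dj})$. Taking the product over all $f$ in the support of $\usigma$ --- equivalently over all $f\in\Phi$, the empty partition contributing the factor $1$ --- yields exactly the asserted value of $\abs{G_n}/\abs{C_{\usigma}}$. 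In the paper the lemma is simply quoted from~\cite{Sta2012}, which derives it in essentially this way.
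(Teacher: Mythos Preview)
Your argument is correct and is precisely the classical derivation (via the centraliser as the automorphism group of a finite torsion $\FF_q[X]$-module, primary decomposition, and the Macdonald count over a DVR) that underlies the reference; the paper itself gives no proof and merely cites \cite[Thm.~1.10.7]{Sta2012}, as you note at the end.
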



\subsection{Parabolic induction}

Recall that, given a finite group $G$, a subgroup $H$ of $G$, and a class function $\phi$ on~$H$, the \emph{induced} class function $\Ind_H^G(\phi)$ on $G$ is given by
\begin{equation}
\Ind_H^G(\phi)(g)=\frac{1}{\abs{H}}\sums{x\in G\\xgx^{-1}\in H}\phi(xgx^{-1}).   \label{eqn:induction_formula}
\end{equation}
The character theory of $ G_n$ crucially relies on the induction of characters from parabolic subgroups of $ G_n$. 
\par
A \emph{composition} is much like a partition, except that the parts do not need to be nonincreasing. Let $\lambda=(\lambda_1,\lambda_2,\dots,\lambda_k)$ be a composition of~$n$. Let $P_\lambda$ be the parabolic subgroup of $ G_n$ consisting of block upper-triangular matrices with block sizes $\lambda_1,\lambda_2,\dots,\lambda_k$, namely
\begin{equation}
\label{eqn:def_parabolic_subgroup}
P_\lambda=\left\{\begin{bmatrix}
g_1 & *     & \cdots & *\\
       & g_2 & \cdots & *\\
       &        & \ddots & \vdots\\
       &        &            & g_k\\
\end{bmatrix}
:g_i\in G_{\lambda_i}
\right\}.
\end{equation}
Let $\pi_i:P_\lambda\to G_{\lambda_i}$ be the mapping that projects to the $i$-th diagonal block, so that
\begin{equation}
\pi_i:
\begin{bmatrix}
g_1 & *     & \cdots & *\\
       & g_2 & \cdots & *\\
       &        & \ddots & \vdots\\
       &        &            & g_k\\
\end{bmatrix}
\mapsto g_i.   \label{eqn:projections}
\end{equation}
Let $\phi_i$ be a class function on $G_{\lambda_i}$. Then 
\[
\prod_{i=1}^k(\phi_i\circ \pi_i)
\]
is a class function on $P_\lambda$. We define the product $\phi_1\odot\phi_2\odot\cdots\odot\phi_k$ to be the induction of this class function to $ G_n$, that is
\begin{equation}
\bigodot_{i=1}^k\phi_i=\Ind_{P_\lambda}^{ G_n}\left(\prod_{i=1}^k(\phi_i\circ \pi_i)\right).   \label{eqn:def_parabolic_induction}
\end{equation}


\subsection{Character theory of $ G_n$}
\label{sec:character_theory}

The complete set of complex irreducible characters has been obtained by Green~\cite{Gre1956}. A good treatment of this topic is also contained in~\cite[Ch.~IV]{Mac1979}. The complex irreducible representations were obtained by Gelfand~\cite{Gel1970} and the irreducible representations over fields of nondefining characteristic were obtained by James~\cite{Jam1986}. The approach of~\cite{Jam1986} is in fact very similar to the standard combinatorial approach to obtain the complex irreducible representations of the symmetric group (see~\cite{Sag2001}, for example) and we mostly follow~\cite{Jam1986} to recall some relevant background on the complex characters of $ G_n$.
\par
The irreducible characters of $ G_n$ are naturally indexed by $\Lambda_n$ and, for $\ulambda\in\Lambda_n$, we denote by $\chi^{\ulambda}$ the corresponding irreducible character. We shall use the short-hand notation $\chi^{f\mapsto\lambda}$ for $\chi^{\ulambda}$ if $\ulambda$ is supported only on $f\in\Phi$ and $\ulambda(f)=\lambda$. These are typically called the \emph{primary} irreducible characters of $ G_n$. It is well known (see~\cite[\S~8]{Jam1986}, for example) that the irreducible characters of $ G_n$ satisfy
\begin{equation}
\chi^{\ulambda}=\bigodot_{f\in\Phi}\chi^{f\mapsto\ulambda(f)}.   \label{eqn:irr_char_induction}
\end{equation}
In order to construct the primary irreducible characters, James~\cite{Jam1986} constructs characters of $G_{dm}$, denoted by $\xi^{f\mapsto\mu}$, where $f\in\Phi$ has degree~$d$ and~$\mu$ is a partition of $m$. Writing $\mu=(\mu_1,\mu_2,\dots,\mu_k)$, these characters satisfy~\cite[(6.2)]{Jam1986}
\begin{equation}
\xi^{f\mapsto\mu}=\bigodot_{i=1}^k\xi^{f\mapsto(\mu_i)}   \label{eqn:xi_induction_from_parts}
\end{equation}
and~\cite[(7.19)]{Jam1986}
\begin{equation}
\xi^{f\mapsto\mu}=\sum_{\lambda}K_{\lambda\mu}\,\chi^{f\mapsto\lambda},   \label{eqn:xi_chi}
\end{equation}
where $\lambda$ ranges over the partitions of $\abs{\mu}$ and $K_{\lambda\mu}$ is a \emph{Kostka number}, which equals the number of semistandard Young tableaux of shape $\lambda$ and content $\mu$. It is well known (see~\cite[\S~2.11]{Sag2001}, for example) that the Kostka numbers satisfy
\begin{equation}
\text{$K_{\mu\mu}=1$ and $K_{\lambda\mu}\ne 0\Rightarrow \lambda\unrhd\mu$}.   \label{eqn:Kostka_triangular}
\end{equation}
Conversely it is readily verified that there are integers $H_{\mu\lambda}$ satisfying
\begin{equation}
\chi^{f\mapsto\lambda}=\sum_{\mu}H_{\mu\lambda}\,\xi^{f\mapsto\mu}   \label{eqn:chi_xi}
\end{equation}
and
\begin{equation}
\text{$H_{\lambda\lambda}=1$ and $H_{\mu\lambda}\ne 0\Rightarrow \mu\unrhd\lambda$}   \label{eqn:Kostka_inv_triangular}
\end{equation}
(see~\cite[p.~105]{Mac1979}, for example).
\par
Now, for $\umu\in\Lambda_n$, we define the characters
\begin{equation}
\xi^{\umu}=\bigodot_{f\in\Phi}\xi^{f\mapsto\umu(f)}.   \label{eqn:xi_induction}
\end{equation}
We denote by $\chi^{\ulambda}_{\usigma}$ and $\xi^{\umu}_{\usigma}$ the characters $\chi^{\ulambda}$ and $\xi^{\umu}$, respectively, evaluated on the conjugacy class $C_{\usigma}$.
\par
We now express $\xi^{\umu}$ and $\chi^{\ulambda}$ in terms of each other. To do so, we define the \emph{shape} of $\ulambda\in\Lambda_n$ to be the mapping $s:\Phi\to\ZZ$ given by $s(f)=\abs{\ulambda(f)}$ for each $f\in\Phi$. We write $\ulambda\sim\umu$ if $\ulambda,\umu\in\Lambda_n$ have the same shape. Then~$\sim$ is an equivalence relation on~$\Lambda_n$. For $\ulambda,\umu\in\Lambda_n$ with $\ulambda\sim\umu$, write
\begin{align*}
K_{\ulambda\umu}&=\prod_{f\in\Phi}K_{\ulambda(f)\umu(f)},\\
H_{\umu\ulambda}&=\prod_{f\in\Phi}H_{\umu(f)\ulambda(f)}.
\end{align*}
We then find that
\begin{align}
\xi^{\umu}&=\sum_{\ulambda\sim\umu}K_{\ulambda\umu}\;\chi^{\ulambda}\quad\text{for each $\umu\in\Lambda_n$},   \label{eqn:uxi_from_uchi}\\
\chi^{\ulambda}&=\sum_{\umu\sim\ulambda}H_{\umu\ulambda}\;\xi^{\umu}\quad\text{for each $\ulambda\in\Lambda_n$}   \label{eqn:uchi_from_uxi}.
\end{align}
\par
An explicit expression for the degree $\chi^{\ulambda}(1)$ (where $1$ is the identity of $ G_n$) of~$\chi^{\ulambda}$ is given by the so-called $q$-analog of the hook-length formula.
\begin{lemma}[{\cite[Thm.~14]{Gre1956}}]
\label{lem:degrees_characters}
We have
\begin{equation}
\frac{1}{\chi^{\ulambda}(1)}\;\prod_{i=1}^n(q^i-1)=\prod_{f\in\Phi}\frac{1}{q^{\abs{f}b(\ulambda(f))}}\prod_{(i,j)\in\ulambda(f)}(q^{\abs{f}h_{i,j}(\ulambda(f))}-1),   \label{eqn:hook_length_formula}
\end{equation}
where, for each partition $\lambda=(\lambda_1,\lambda_2,\dots)$,
\[
b(\lambda)=\sum_{i\ge 1}(i-1)\lambda_i
\]
and $h_{i,j}(\lambda)$ is the \emph{hook length} of $\lambda$ at $(i,j)$, namely
\[
h_{i,j}(\lambda)=\lambda_i+\lambda'_j-i-j+1
\]
and the corresponding product over $(i,j)$ is over all boxes of the Young diagram of~$\ulambda(f)$.
\end{lemma}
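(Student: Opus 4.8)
This is the $q$-analogue of the hook-length formula for the degrees of the irreducible characters of $G_n$; it is Theorem~14 of~\cite{Gre1956}, where it is extracted from Green's characteristic map. Rather than reproducing that argument, the plan is to stay within the combinatorial framework recalled above and build on the unitriangular relation~\eqref{eqn:xi_chi} between the characters $\xi^{f\mapsto\mu}$ and $\chi^{f\mapsto\lambda}$, in three steps: reduce to primary characters, invert the Kostka system in the primary case, and close with a symmetric-function identity.

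\emph{Reduction to primary characters.} By~\eqref{eqn:irr_char_induction}, $\chi^{\ulambda}$ is the parabolic induction to $G_n$ of the external product of the primary characters $\chi^{f\mapsto\ulambda(f)}$ over the support of $\ulambda$, so $\chi^{\ulambda}(1)$ is the index in $G_n$ of the corresponding parabolic subgroup times $\prod_{f}\chi^{f\mapsto\ulambda(f)}(1)$. Since $\abs{P_\nu}=q^{\binom{\abs{\nu}}{2}}\prod_{j}\prod_{i=1}^{\nu_j}(q^i-1)$ for any composition $\nu$, this index equals $\prod_{i=1}^{n}(q^i-1)$ divided by $\prod_{f}\prod_{i=1}^{\abs{f}\,\abs{\ulambda(f)}}(q^i-1)$; as the right-hand side of~\eqref{eqn:hook_length_formula} is itself a product over $f$, the statement reduces to the primary case: for $f\in\Phi$ of degree $d$ and $\lambda$ a partition of $m$, we must show
\[
\chi^{f\mapsto\lambda}(1)=q^{d\,b(\lambda)}\,\frac{\prod_{i=1}^{dm}(q^i-1)}{\prod_{(i,j)\in\lambda}\bigl(q^{d\,h_{i,j}(\lambda)}-1\bigr)}.
\]

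\emph{Inverting the Kostka system, and the atoms.} Fix $f\in\Phi$ of degree $d$. By~\eqref{eqn:xi_chi} together with the unitriangularity~\eqref{eqn:Kostka_triangular}, the numbers $\chi^{f\mapsto\lambda}(1)$ with $\abs{\lambda}=m$ form the unique solution of the triangular (with respect to the dominance order) linear system $\xi^{f\mapsto\mu}(1)=\sum_{\lambda\unrhd\mu}K_{\lambda\mu}\,\chi^{f\mapsto\lambda}(1)$. By~\eqref{eqn:xi_induction_from_parts} and~\eqref{eqn:def_parabolic_induction}, $\xi^{f\mapsto\mu}=\bigodot_{i=1}^{k}\xi^{f\mapsto(\mu_i)}$ is a parabolic induction from $P_{(d\mu_1,\dots,d\mu_k)}$, so $\xi^{f\mapsto\mu}(1)$ is the index of that subgroup in $G_{dm}$ times $\prod_{i}\xi^{f\mapsto(\mu_i)}(1)$; and $K_{\lambda,(m)}=\delta_{\lambda,(m)}$ forces $\xi^{f\mapsto(m)}=\chi^{f\mapsto(m)}$. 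Everything therefore comes down to the degrees of the one-row atoms $\chi^{f\mapsto(m)}$, and I expect this to be the only genuinely nontrivial point. Concretely one needs
\[
\chi^{f\mapsto(m)}(1)=\frac{\prod_{i=1}^{dm}(q^i-1)}{\prod_{k=1}^{m}(q^{dk}-1)}=\prod_{\substack{1\le i\le dm\\ d\nmid i}}(q^i-1).
\]
For $d=1$ this is trivial, since $\chi^{1\mapsto(m)}$ is the trivial character of $G_m$. For $d>1$ one may read the degree off James's explicit construction of $\xi^{f\mapsto(m)}=\chi^{f\mapsto(m)}$ (the Harish--Chandra series of the cuspidal character $\chi^{f\mapsto(1)}$ of $G_d$, whose degree is $\prod_{i=1}^{d-1}(q^i-1)$, contains $\chi^{f\mapsto(m)}$ as the member indexed by the one-row partition), or invoke Lusztig's Jordan decomposition of characters: the semisimple element attached to $f\mapsto(m)$ has centraliser isomorphic to $\GL(m,q^d)$ and corresponds there to the trivial unipotent character, so $\chi^{f\mapsto(m)}(1)$ is the quotient of the $p'$-parts of $\abs{\GL(dm,q)}$ and $\abs{\GL(m,q^d)}$ (where $p$ is the characteristic of $\FF_q$), which is exactly the expression above.

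\emph{Closing the computation.} Granting the atoms, substitute the proposed formulas into the triangular system and cancel the common factor $\prod_{i=1}^{dm}(q^i-1)$. Writing $Q=q^d$, what remains is the combinatorial identity
\[
\sum_{\lambda\vdash m}K_{\lambda\mu}\,\frac{Q^{b(\lambda)}}{\prod_{(i,j)\in\lambda}\bigl(Q^{h_{i,j}(\lambda)}-1\bigr)}=\frac{1}{\prod_{j\ge1}\prod_{k=1}^{\mu_j}(Q^k-1)}\qquad\text{for every }\mu\vdash m,
\]
which is the image of the Schur expansion $h_\mu=\sum_{\lambda}K_{\lambda\mu}\,s_\lambda$ under the stable principal specialization $x_i\mapsto Q^{i-1}$ ($i\ge1$), using the evaluations $s_\lambda\mapsto Q^{b(\lambda)}\big/\prod_{(i,j)\in\lambda}(1-Q^{h_{i,j}(\lambda)})$ and $h_r\mapsto\prod_{k=1}^{r}(1-Q^k)^{-1}$; the factors $(-1)^m$ produced by replacing $1-Q^{(\cdot)}$ with $Q^{(\cdot)}-1$ agree on the two sides because $\lambda$ and $\mu$ each have exactly $m$ boxes. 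This settles the primary case and hence, via the first step, the lemma.
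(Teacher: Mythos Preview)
The paper gives no proof of this lemma at all: it is quoted directly from \cite[Thm.~14]{Gre1956} and used as a black box, so there is no argument in the paper to compare your proposal against.

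That said, your sketch is a correct route to the formula within the framework the paper has already set up. The reduction to the primary case via~\eqref{eqn:irr_char_induction} and the computation of the parabolic index are routine and accurate. The unitriangular inversion through~\eqref{eqn:xi_chi} and~\eqref{eqn:Kostka_triangular} correctly reduces the problem to the one-row atoms $\chi^{f\mapsto(m)}(1)$, and your closing identity is exactly the stable principal specialisation of $h_\mu=\sum_\lambda K_{\lambda\mu}\,s_\lambda$, using the hook-content evaluation $s_\lambda(1,Q,Q^2,\dots)=Q^{b(\lambda)}\big/\prod_{(i,j)\in\lambda}(1-Q^{h_{i,j}(\lambda)})$ (see~\cite[Ch.~I, \S3, Ex.~2]{Mac1979}); the sign bookkeeping with $(-1)^m$ is handled correctly. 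The only genuinely external input, as you rightly flag, is the degree of the atom $\chi^{f\mapsto(m)}$ when $\abs{f}>1$; either of your two references (James's construction within the Harish--Chandra series of the cuspidal $\chi^{f\mapsto(1)}$, or Lusztig's Jordan decomposition with centraliser $\GL(m,q^{\abs{f}})$) supplies this. In effect you have reassembled Green's computation through James's $\xi$-characters and the symmetric-function dictionary, which is a legitimate and arguably more transparent derivation than the original.
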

\par
It can be readily verified from Lemma~\ref{lem:degrees_characters} that the linear (degree-one) irreducible characters of $ G_n$ are precisely the primary characters $\chi^{f\mapsto(n)}$, where $\abs{f}=1$. These are the only characters of $ G_n$ that we shall need explicitly. Let $\alpha$ be a generator of the multiplicative group $\FF_q^*$ of $\FF_q$, let $\omega=\exp(2\pi \sqrt{-1}/(q-1))$ be a complex root of unity, and let $\theta:\FF_q^*\to\CC$ be the linear character of $\FF_q^*$ given by $\theta(\alpha^i)=\omega^i$. The following result is essentially given in~\cite[pp.~415 and 444]{Gre1956}.
\begin{lemma}[{\cite{Gre1956}}]
\label{lem:det_char}
For all $g\in G_n$, we have
\[
\chi^{X-\alpha^i\mapsto(n)}(g)=\theta(\det(g)^i).
\]
In particular $\chi^{X-1\mapsto(n)}$ is the trivial character.
\end{lemma}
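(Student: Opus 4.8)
The plan is to prove this in three steps: show that each $\chi^{X-\alpha^i\mapsto(n)}$ is a linear character, identify the group of all linear characters of $G_n$, and then match up the two parametrisations. For the first step I would apply Lemma~\ref{lem:degrees_characters} to the element $\ulambda\in\Lambda_n$ supported on $f=X-\alpha^i$ with $\ulambda(f)=(n)$: the Young diagram of $(n)$ is a single row, so $b((n))=0$ and the hook lengths are $h_{1,j}((n))=n-j+1$ for $j=1,\dots,n$. Hence the right-hand side of~\eqref{eqn:hook_length_formula} equals $\prod_{j=1}^n(q^{n-j+1}-1)=\prod_{i=1}^n(q^i-1)$, which forces $\chi^{X-\alpha^i\mapsto(n)}(1)=1$. (The same can be read off from~\eqref{eqn:xi_chi}: since $(n)$ is the largest partition of $n$ in the dominance order, the triangularity~\eqref{eqn:Kostka_triangular} collapses the sum to $\xi^{X-\alpha^i\mapsto(n)}=\chi^{X-\alpha^i\mapsto(n)}$, and $\xi^{X-\alpha^i\mapsto(n)}$ is a linear character in James's construction~\cite{Jam1986}.) In particular $\chi^{X-\alpha^0\mapsto(n)},\dots,\chi^{X-\alpha^{q-2}\mapsto(n)}$ are $q-1$ pairwise distinct linear characters of $G_n$.

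For the remaining steps, note that every linear character of $G_n$ factors through the abelianisation $G_n/[G_n,G_n]$; since $[G_n,G_n]=\mathrm{SL}(n,q)$ for the range of $n$ under consideration and $\det$ induces an isomorphism $G_n/\mathrm{SL}(n,q)\cong\FF_q^*$, the linear characters of $G_n$ are exactly the $q-1$ maps $g\mapsto\theta(\det(g)^j)$ with $j\in\{0,1,\dots,q-2\}$. Comparing with the first step yields a bijection $i\mapsto j(i)$ of $\ZZ/(q-1)\ZZ$ with $\chi^{X-\alpha^i\mapsto(n)}=\theta(\det(\,\cdot\,)^{j(i)})$, and the point is that $j(i)=i$. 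For $n=1$ this is the normalisation under which Green labels the characters of $G_1=\FF_q^*$, namely $\chi^{X-\alpha^i\mapsto(1)}=\theta^i$, and for general $n$ the labelling of the primary characters $\chi^{f\mapsto\lambda}$ in~\cite{Gre1956} is set up compatibly with this; I would close the argument by quoting~\cite[pp.~415 and~444]{Gre1956}, or, alternatively, by evaluating $\chi^{X-\alpha^i\mapsto(n)}$ on a single element of determinant $\alpha$---for instance a generator of a Singer cycle, whose determinant is a norm of a field generator and hence generates $\FF_q^*$---via Green's character formula for primary characters on elliptic classes. Taking $i=0$ gives the final assertion that $\chi^{X-1\mapsto(n)}$ is trivial.

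The step I expect to be the real obstacle is the last identification $j(i)=i$. Linearity and the fact that the $\chi^{X-\alpha^i\mapsto(n)}$ exhaust the one-dimensional group $\{\theta(\det(\,\cdot\,)^j):j\}$ are essentially formal, but the precise pairing of the field element $\alpha^i$ with the exponent $i$ of $\theta$ cannot be extracted from the structural relations available here: the expansions~\eqref{eqn:xi_chi} and~\eqref{eqn:uchi_from_uxi} and the parabolic-induction identity~\eqref{eqn:xi_induction_from_parts} are all unchanged under the relabelling $\alpha^i\mapsto\alpha^{ci}$ for any $c$ coprime to $q-1$. This matching is part of the conventions fixed in Green's work, so I would present the lemma essentially as a citation of~\cite{Gre1956} and supply the hook-length computation only for completeness.
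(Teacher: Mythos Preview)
Your proposal is correct, and in fact goes further than the paper does: the paper supplies no proof at all for this lemma, treating it purely as a citation of Green~\cite[pp.~415 and 444]{Gre1956}. Your hook-length computation showing $\chi^{X-\alpha^i\mapsto(n)}(1)=1$ and your identification of the linear characters via the abelianisation $G_n/\mathrm{SL}(n,q)\cong\FF_q^*$ are sound additions, and you are right that the final matching $j(i)=i$ is a labelling convention that can only be settled by reference to Green's construction---which is exactly why the paper presents the lemma as a quoted fact rather than proving it.
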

\par
In what follows we consider certain characters of $G_n$ related to the permutation character of $G_n$ on the set of $t$-tuples of linearly independent elements of $\FF_q^n$. For~$t\le n$, let $H_{n,t}$ be the stabiliser of a fixed $t$-tuple of linearly independent elements of~$\FF_q^n$. We define $\zeta^{(t,i)}$ to be the character obtained by inducing the linear character
\begin{equation}
\begin{gathered}
H_{n,t}\to\CC\\
g\mapsto \theta(\det(g)^i)
\end{gathered}
\label{eqn:def_char_det_H}
\end{equation}
to $ G_n$. Then $\zeta^{(t,0)}$ is the permutation character of $ G_n$ on the set of $t$-tuples of linearly independent elements of $\FF_q^n$. These characters are related to each other in the following way.
\begin{lemma}
\label{lem:perm_character_zeta}
For each $g\in G_n$, we have
\[
\zeta^{(t,i)}(g)=\theta(\det(g)^i)\zeta^{(t,0)}(g).
\]
\end{lemma}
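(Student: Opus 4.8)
The plan is to express $\zeta^{(t,i)}$ as a parabolic-type induction of a character that factors through a determinant twist, and then to exploit the multiplicativity of $\theta\circ\det$ together with the induction formula~\eqref{eqn:induction_formula}. Concretely, the stabiliser $H_{n,t}$ of a $t$-tuple $(e_1,\dots,e_t)$ of linearly independent vectors (which we may take to be the first $t$ standard basis vectors) consists precisely of the matrices of the block form $\left[\begin{smallmatrix} I_t & * \\ 0 & h \end{smallmatrix}\right]$ with $h\in G_{n-t}$; this is the unipotent-by-Levi subgroup sitting inside the parabolic $P_{(t,n-t)}$, and for $g\in H_{n,t}$ we have $\det(g)=\det(h)$. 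So the linear character in~\eqref{eqn:def_char_det_H} is $g\mapsto\theta(\det(\pi_2(g))^i)$ where $\pi_2$ picks out the lower-right block.

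First I would record the multiplicativity: for $x,g\in G_n$ with $xgx^{-1}\in H_{n,t}$, one has $\det(xgx^{-1})=\det(g)$, hence $\theta(\det(xgx^{-1})^i)=\theta(\det(g)^i)$ — note this uses only that $\det$ is a class function on $G_n$, not anything about $H_{n,t}$. Plugging the linear character of~\eqref{eqn:def_char_det_H} into the induction formula~\eqref{eqn:induction_formula} with $H=H_{n,t}$ gives
\[
\zeta^{(t,i)}(g)=\frac{1}{\abs{H_{n,t}}}\sums{x\in G_n\\xgx^{-1}\in H_{n,t}}\theta(\det(xgx^{-1})^i).
\]
For each $x$ in the indexing set, $\theta(\det(xgx^{-1})^i)=\theta(\det(g)^i)$, which is a constant independent of $x$; pulling it out of the sum yields
\[
\zeta^{(t,i)}(g)=\theta(\det(g)^i)\cdot\frac{1}{\abs{H_{n,t}}}\sums{x\in G_n\\xgx^{-1}\in H_{n,t}}1=\theta(\det(g)^i)\,\zeta^{(t,0)}(g),
\]
since the same computation with $i=0$ (where $\theta(\det(\cdot)^0)\equiv 1$) identifies the remaining sum as $\zeta^{(t,0)}(g)$. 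This is exactly the claimed identity.

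I do not expect a serious obstacle here; the only thing to be careful about is the bookkeeping in identifying the induced linear character~\eqref{eqn:def_char_det_H} with the determinant twist on $H_{n,t}$ — i.e.\ that the determinant of an element of $H_{n,t}$ coincides with the determinant of its Levi part — and, more importantly, the observation that $\det$ being $G_n$-invariant under conjugation is what lets us factor $\theta(\det(g)^i)$ out of the induction sum uniformly over all conjugating elements $x$. Once that is noted, the argument is a one-line manipulation of~\eqref{eqn:induction_formula}. (Alternatively, and even more cleanly, one can avoid the sum entirely: the linear character of~\eqref{eqn:def_char_det_H} is the restriction to $H_{n,t}$ of the linear character $\chi^{X-\alpha\mapsto(n)}$ raised to the $i$-th power, i.e.\ $g\mapsto\theta(\det(g)^i)$ of $G_n$ itself, so $\zeta^{(t,i)}=\Ind_{H_{n,t}}^{G_n}\big(\operatorname{Res}(\theta(\det(\cdot)^i))\cdot\1_{H_{n,t}}\big)=\theta(\det(\cdot)^i)\cdot\Ind_{H_{n,t}}^{G_n}(\1)=\theta(\det(\cdot)^i)\,\zeta^{(t,0)}$ by the projection formula for induced characters.)
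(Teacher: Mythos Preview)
Your proof is correct and follows essentially the same approach as the paper: apply the induction formula~\eqref{eqn:induction_formula}, use that $\det$ is conjugation-invariant to replace $\theta(\det(xgx^{-1})^i)$ by $\theta(\det(g)^i)$, factor this constant out, and recognise the remaining sum as $\zeta^{(t,0)}(g)$. Your preliminary discussion of the block structure of $H_{n,t}$ is not actually needed for the argument (the paper omits it), and your alternative via the projection formula is a valid shortcut, but the main line is identical to the paper's.
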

\begin{proof}
Since similar matrices have the same determinant, we find from~\eqref{eqn:induction_formula} that
\begin{align*}
\zeta^{(t,i)}(g)&=\frac{1}{\abs{H_{n,t}}}\sums{x\in G_n\\xgx^{-1}\in H_{n,t}}\theta(\det(xgx^{-1})^i)\\
&=\frac{1}{\abs{H_{n,t}}}\sums{x\in G_n\\xgx^{-1}\in H_{n,t}}\theta(\det(g)^i)\\
&=\theta(\det(g)^i)\zeta^{(t,0)}(g).\qedhere
\end{align*}
\end{proof}
\par
We shall also need the following information about the decomposition of $\zeta^{(t,i)}$ into irreducible characters of $ G_n$.
\begin{lemma}
\label{lem:zeta_decomposition}
We have
\[
\zeta^{(t,i)}=\sums{\ulambda\in\Lambda_n}m_{i,\ulambda}\,\chi^{\ulambda},
\]
where $m_{i,\ulambda}\ne 0$ if and only if $\ulambda(\alpha^i)_1\ge n-t$.
\end{lemma}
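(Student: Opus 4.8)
The plan is to compute $\zeta^{(t,i)}$ as a parabolic induction and then decompose it using the machinery of Sections~2.3 and~2.4. The key observation is that the stabiliser $H_{n,t}$ of a $t$-tuple of linearly independent vectors sits inside the parabolic subgroup $P_{(t,n-t)}$: after choosing coordinates so that the fixed $t$-tuple is the standard basis of the first $t$ coordinates, $H_{n,t}$ consists of the block upper-triangular matrices in $P_{(t,n-t)}$ whose top-left $t\times t$ block is the identity. Inducing in stages, first from $H_{n,t}$ to $P_{(t,n-t)}$ and then from $P_{(t,n-t)}$ to $G_n$, I would identify the intermediate induced character on $P_{(t,n-t)}$. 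The linear character $g\mapsto\theta(\det(g)^i)$ on $H_{n,t}$ extends to $P_{(t,n-t)}$ (namely as $g\mapsto\theta(\det(\pi_2(g))^i)$, since $\det(\pi_1(g))=1$ on $H_{n,t}$), so by the projection formula the induction from $H_{n,t}$ to $P_{(t,n-t)}$ of the character in~\eqref{eqn:def_char_det_H} equals this extension times the permutation character of $G_t$ on $G_t$ itself, i.e. the regular character of $G_t$ composed with $\pi_1$, multiplied by $\theta(\det(\pi_2)^i)$. Pushing this up to $G_n$ via~\eqref{eqn:def_parabolic_induction} gives
\[
\zeta^{(t,i)}=\rho_{G_t}\odot\chi^{X-\alpha^i\mapsto(n-t)},
\]
where $\rho_{G_t}=\sum_{\ulambda\in\Lambda_t}\chi^{\ulambda}(1)\,\chi^{\ulambda}$ is the regular character of $G_t$ and we have used Lemma~\ref{lem:det_char} to identify the linear character $g\mapsto\theta(\det(g)^i)$ of $G_{n-t}$ with $\chi^{X-\alpha^i\mapsto(n-t)}$.

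Next I would decompose this parabolic product into irreducibles. By associativity of $\odot$ and~\eqref{eqn:irr_char_induction}, for a primary character $\chi^{f\mapsto\mu}$ of $G_t$ one has $\chi^{f\mapsto\mu}\odot\chi^{X-\alpha^i\mapsto(n-t)}$ supported on those $\chi^{\ulambda}$ with $\ulambda$ supported on $\{f,X-\alpha^i\}$ (or just $\{X-\alpha^i\}$ if $f=X-\alpha^i$), with $\ulambda(f)$ ranging over partitions containing $\mu$ (when $f\ne X-\alpha^i$) and $\ulambda(X-\alpha^i)$ a partition of $n-t+\abs{\mu\cap(X-\alpha^i)\text{-part}}$ — the precise combinatorics coming from the Littlewood--Richardson/Pieri rule governing $\odot$. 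The cleanest route is to sum over all of $\Lambda_t$ at once: since $\rho_{G_t}$ is the regular character, $\rho_{G_t}\odot\chi^{X-\alpha^i\mapsto(n-t)}$ is the induction to $G_n$ of (regular character of $G_t$) $\times$ $\chi^{X-\alpha^i\mapsto(n-t)}\circ\pi_2$, and one can compute the multiplicity of $\chi^{\ulambda}$ in it by Frobenius reciprocity as $\dim\mathrm{Hom}_{P_{(t,n-t)}}(\rho_{G_t}\otimes(\chi^{X-\alpha^i\mapsto(n-t)}\circ\pi_2),\ \mathrm{Res}\,\chi^{\ulambda})$. The regular character kills all constraints on the $G_t$-part, so $m_{i,\ulambda}\ne 0$ precisely when $\ulambda$, after removing a horizontal strip of appropriate size in the $X-\alpha^i$ coordinate to account for the $\chi^{X-\alpha^i\mapsto(n-t)}$ factor, can be "filled in" by something of total size $t$ on the $G_t$ side. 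Concretely this forces $\ulambda(\alpha^i)$ to have a part of size at least $n-t$: the $\odot$-decomposition of $\chi^{g\mapsto\nu}\odot\chi^{X-\alpha^i\mapsto(n-t)}$ only produces $\ulambda$ with $\ulambda(\alpha^i)_1\ge n-t$ (one adds the single row $(n-t)$ as a horizontal strip, so $\ulambda(\alpha^i)_1\ge n-t$), and conversely every such $\ulambda$ with $\norm{\ulambda}=n$ and $\ulambda(\alpha^i)_1\ge n-t$ arises, since then the complement obtained by deleting the first $n-t$ boxes of the first row of $\ulambda(\alpha^i)$ together with the rest of $\ulambda$ has total size exactly $t$ and hence occurs in $\rho_{G_t}$.

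I would therefore structure the write-up as: (i) the reduction $\zeta^{(t,i)}=\rho_{G_t}\odot\chi^{X-\alpha^i\mapsto(n-t)}$; (ii) a lemma computing the irreducible constituents of $\psi\odot\chi^{X-\alpha^i\mapsto(n-t)}$ for an arbitrary character $\psi$ of $G_t$ in terms of the $\odot$-Pieri rule, valid because adding a character supported on the single polynomial $X-\alpha^i$ with partition equal to a single row $(n-t)$ corresponds (via~\eqref{eqn:uchi_from_uxi}–\eqref{eqn:uxi_from_uchi} and the known behaviour of $\odot$ on the $\xi$-basis) to attaching a horizontal strip; (iii) combining the two. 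The main obstacle is step~(ii): making precise the Pieri-type rule for $\odot$. I expect this is cleanest by passing to the $\xi$-characters, where~\eqref{eqn:xi_induction_from_parts} shows $\odot$ is "free" on one-row pieces, so that $\xi^{\umu}\odot\xi^{X-\alpha^i\mapsto(n-t)}=\xi^{\unu}$ where $\unu$ agrees with $\umu$ off $X-\alpha^i$ and $\unu(\alpha^i)$ is $\umu(\alpha^i)$ with the extra part $(n-t)$ appended/merged appropriately; then re-expanding via~\eqref{eqn:uchi_from_uxi},~\eqref{eqn:uxi_from_uchi} and the triangularity~\eqref{eqn:Kostka_triangular},~\eqref{eqn:Kostka_inv_triangular} of the Kostka matrices controls which $\chi^{\ulambda}$ can appear and with nonzero coefficient. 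The nonvanishing (the "only if" being the easy containment and the "if" needing that no cancellation wipes out a constituent) follows because all the relevant structure constants in the $\xi$-expansion are nonnegative, while the Kostka inverse matrix $(H_{\umu\ulambda})$ is unitriangular, so positivity is preserved along the diagonal term; the condition $\ulambda(\alpha^i)_1\ge n-t$ is exactly the statement that the single row $(n-t)$ fits as the first row, which is forced since all other coordinates of $\ulambda$ together with $\ulambda(\alpha^i)$ minus its first row must account for only $t$ boxes.
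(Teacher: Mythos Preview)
Your reduction in step~(i) is exactly what the paper does: realise $H_{n,t}$ inside $P_{(t,n-t)}$, induce in stages, and obtain
\[
\zeta^{(t,i)}=\sum_{\ukappa\in\Lambda_t}\chi^{\ukappa}(1)\,\big(\chi^{\ukappa}\odot\chi^{X-\alpha^i\mapsto(n-t)}\big),
\]
i.e.\ your $\rho_{G_t}\odot\chi^{X-\alpha^i\mapsto(n-t)}$.

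Where you diverge is step~(ii). You treat the Pieri-type rule for $\odot$ as the ``main obstacle'' and propose to recover it by passing to the $\xi$-basis and back via the Kostka matrices. This detour is unnecessary: the paper simply invokes the fact (recorded in Macdonald, Ch.~IV,~\S4) that for fixed $f\in\Phi$ the primary characters $\chi^{f\mapsto\lambda}$ form an algebra under $\odot$ isomorphic to symmetric functions, with $\chi^{f\mapsto\lambda}$ sent to the Schur function $s_\lambda$. Pieri's rule then gives directly
\[
\chi^{X-\alpha^i\mapsto\kappa}\odot\chi^{X-\alpha^i\mapsto(n-t)}=\sum_{\lambda}\chi^{X-\alpha^i\mapsto\lambda},
\]
the sum over $\lambda$ with $\lambda/\kappa$ a horizontal $(n-t)$-strip; combined with~\eqref{eqn:irr_char_induction} this immediately yields the constituents of $\chi^{\ukappa}\odot\chi^{X-\alpha^i\mapsto(n-t)}$ for arbitrary $\ukappa\in\Lambda_t$, all with nonnegative multiplicity. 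Both directions of the ``if and only if'' then follow by elementary combinatorics of horizontal strips (for the ``if'' direction, given $\ulambda$ with $\ulambda(\alpha^i)_1\ge n-t$, one exhibits a suitable $\ukappa\in\Lambda_t$).

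Your proposed $\xi$-route would work for the ``only if'' direction, but your argument for nonvanishing (``positivity is preserved along the diagonal term'') is not complete as stated: the coefficients $c_{\umu}=\sum_{\ukappa\sim\umu}\chi^{\ukappa}(1)H_{\umu\ukappa}$ involve the inverse Kostka numbers $H_{\umu\ukappa}$, which are signed, so cancellation is a genuine concern and unitriangularity alone does not settle it. The clean way out is exactly what the paper does: stay in the $\chi$-basis, where Pieri gives manifestly nonnegative multiplicities and no cancellation can occur.
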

\begin{proof}
We may choose $H_{n,t}$ to be
\[
H_{n,t}=\left\{\begin{bmatrix}
I & *\\
 & g
\end{bmatrix}
:g\in G_{n-t}
\right\},
\]
so that $H_{n,t}$ is a subgroup of the parabolic subgroup $P_{(t,n-t)}$ given in~\eqref{eqn:def_parabolic_subgroup}. Let $\pi_1$ and~$\pi_2$ be the projections onto the diagonal blocks of orders $t$ and~$n-t$, respectively, as given in~\eqref{eqn:projections}. Using Lemma~\ref{lem:det_char}, the character~\eqref{eqn:def_char_det_H} can be written as
\begin{equation}
(1\circ \pi_1)\,(\chi^{X-\alpha^i\mapsto (n-t)}\circ\pi_2).   \label{eqn:char_det_H}
\end{equation}
where $1$ is the trivial character of the trivial subgroup of $G_t$. By Frobenius reciprocity,~$1$ induces on $G_t$ to the character
\[
\sum_{\ukappa\in\Lambda_t}\chi^{\ukappa}(1)\,\chi^{\ukappa}.
\]
Since $P_{(t,n-t)}/H_{n,t}\cong G_t$, it is then readily verified that~\eqref{eqn:char_det_H} induces on $P_{(t,n-t)}$ to the character
\[
\sum_{\ukappa\in\Lambda_t}\chi^{\ukappa}(1)\,(\chi^{\ukappa}\circ\pi_1)\,(\chi^{X-\alpha^i\mapsto (n-t)}\circ\pi_2).
\]
Hence, by transitivity of induction, we have
\[
\zeta^{(t,i)}=\sum_{\ukappa\in\Lambda_t}\chi^{\ukappa}(1)\;(\chi^{\ukappa}\odot \chi^{X-\alpha^i\mapsto (n-t)}).
\]
It is well known~\cite[Ch.~IV,~\S~4]{Mac1979} that, for each fixed $f\in\Phi$, characters~$\chi^{f\mapsto \lambda}$ form an algebra with multiplication $\odot$ that is isomorphic to the algebra of symmetric functions and the images of the characters~$\chi^{f\mapsto \lambda}$ are the Schur functions. We then find from Pieri's rule (see~\cite[Ch.~I, (5.16)]{Mac1979}, for example) that
\[
\chi^{X-\alpha^i\mapsto\kappa}\odot\chi^{X-\alpha^i\mapsto(n-t)}=\sum_{\lambda}\chi^{X-\alpha^i\mapsto\lambda},
\]
where $\lambda$ runs through all partitions whose Young diagram is obtained from that of $\kappa$ by adding $n-t$ boxes, no two of which in the same column. Using~\eqref{eqn:irr_char_induction} the statement of the lemma is then readily verified.
\end{proof}


\section{The Hoffman bound}
\label{sec:association_schemes}

Henceforth we use the following notation. For a field $K$ and finite sets $X$ and~$Y$, we denote by $K(X,Y)$ the set of $\abs{X}\times\abs{Y}$ matrices $A$ with entries in $K$, where rows and columns are indexed by $X$ and $Y$, respectively. For $x\in X$ and $y\in Y$, the $(x,y)$-entry of $A$ is written as $A(x,y)$. If $\abs{Y}=1$, then we omit $Y$, so $K(X)$ is the set of column vectors $a$ indexed by $X$ and, for $x\in X$, the $x$-entry of $a$ is written as $a(x)$.
\par
The \emph{adjacency matrix} of a graph $\Gamma=(X,E)$ is the matrix $A\in\RR(X,X)$ given by
\[
A(x,y)=\begin{cases}
1 & \text{for $\{x,y\}\in E$}\\
0 & \text{otherwise}.
\end{cases}
\]
Then $A$ is a real symmetric matrix, which of course has an orthonormal system of~$\abs{X}$ eigenvectors forming a basis of $\RR(X)$. All eigenvalues of $A$ are real and referred to as the eigenvalues of $\Gamma$. Note that, if $\Gamma$ is $d$-regular, then $d$ is an eigenvalue of $\Gamma$ and the all-ones vector is a corresponding eigenvector.
\par
Our starting point arises from the following generalised versions of the Hoffman bound~\cite{Hae2021}, stated and proved by Ellis, Friedgut, and Pilpel~\cite[\S~2.4]{EllFriPil2011} in the following form.
\begin{proposition}
\label{pro:Hoffman}
Let $\Gamma=(X,E)$ be a graph on $n$ vertices. Suppose that $\Gamma_0,\Gamma_1,\dots,\Gamma_r$ are regular spanning subgraphs of $\Gamma$, all having $\{v_0,v_1,\dots,v_{n-1}\}$ as an orthonormal system of eigenvectors with $v_0$ being the all-ones vector. Let $P_i(k)$ be the eigenvalue of~$v_k$ in $\Gamma_i$. Let $w_0,w_1,\dots,w_r\in\RR$ and write $P(k)=\sum_{i=0}^rw_iP_i(k)$.
\begin{enumerate}[(i)]
\item If $Y\subseteq X$ is an independent set in $\Gamma$, then 
\[
\frac{\abs{Y}}{\abs{X}}\le \frac{\abs{P_{\min}}}{P(0)+\abs{P_{\min}}},
\]
where $P_{\min}=\min_{k\ne 0} P(k)$. In case of equality we have
\[
1_Y\in\ang{\{v_0\}\cup\{v_k:P(k)=P_{\min}\}}.
\]
\item If $Y,Z\subseteq X$ are such that there are no edges between $Y$ and $Z$ in $\Gamma$, then 
\[
\sqrt{\frac{\abs{Y}}{\abs{X}}\,\frac{\abs{Z}}{\abs{X}}}\le \frac{P_{\max}}{P(0)+P_{\max}},
\]
where $P_{\max}=\max_{k\ne 0} \abs{P(k)}$. In case of equality we have
\[
1_Y,1_Z\in\ang{\{v_0\}\cup\{v_k:\abs{P(k)}=P_{\max}\}}.
\]
\end{enumerate}
\end{proposition}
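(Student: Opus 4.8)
The plan is to combine the spectral data into a single matrix and then run the classical Hoffman (ratio) bound argument. I would set $A=\sum_{i=0}^{r}w_iA_i\in\RR(X,X)$, where $A_i$ is the adjacency matrix of $\Gamma_i$. Since each $\Gamma_i$ is a loopless spanning subgraph of $\Gamma$, the matrix $A$ is real symmetric, has zero diagonal, and satisfies $A(x,y)=0$ whenever $x\ne y$ and $\{x,y\}\notin E$; and since the $\Gamma_i$ share the orthonormal eigenbasis $v_0,\dots,v_{n-1}$, so does $A$, with $Av_k=P(k)v_k$. Normalising $v_0$ to be $n^{-1/2}$ times the all-ones vector, I would record the Parseval identities: for $W\subseteq X$, writing $1_W=\sum_k c_kv_k$, one has $c_0=\abs{W}/\sqrt n$, hence $c_0^2=\abs{W}^2/n$ and $\sum_{k\ne 0}c_k^2=\abs{W}-\abs{W}^2/n$. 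I will assume $P(0)>0$, as otherwise the asserted bounds are vacuous.

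For part~(i), suppose $Y$ is independent in $\Gamma$. Then $1_Y^{T}A\,1_Y=\sum_{x\in Y}\sum_{y\in Y}A(x,y)=0$: the diagonal terms vanish and each off-diagonal term is zero since $\{x,y\}\notin E$. Expanding in the eigenbasis and bounding $P(k)\ge P_{\min}$ for $k\ne 0$ gives $0\ge P(0)\,\abs{Y}^2/n+P_{\min}\bigl(\abs{Y}-\abs{Y}^2/n\bigr)$, which rearranges at once to $\abs{Y}/n\le\abs{P_{\min}}/\bigl(P(0)+\abs{P_{\min}}\bigr)$. In case of equality the slack $\sum_{k\ne 0}(P(k)-P_{\min})c_k^2$ must vanish, so $c_k=0$ whenever $P(k)\ne P_{\min}$, i.e.\ $1_Y\in\ang{\{v_0\}\cup\{v_k:P(k)=P_{\min}\}}$.

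For part~(ii), suppose there are no edges of $\Gamma$ between $Y$ and $Z$; the same case analysis gives $1_Y^{T}A\,1_Z=0$. Writing $1_Y=\sum_k b_kv_k$, $1_Z=\sum_k c_kv_k$, I would estimate $P(0)b_0c_0=-\sum_{k\ne 0}P(k)b_kc_k\le P_{\max}\sum_{k\ne 0}\abs{b_k}\abs{c_k}$ and then apply Cauchy--Schwarz together with the Parseval identities. Writing $\alpha=\abs{Y}/n$ and $\beta=\abs{Z}/n$ and assuming $Y,Z$ nonempty, this reduces to $P(0)\sqrt{\alpha\beta}\le P_{\max}\sqrt{(1-\alpha)(1-\beta)}$; two uses of the AM--GM inequality give $\sqrt{(1-\alpha)(1-\beta)}\le 1-\tfrac{1}{2}(\alpha+\beta)\le 1-\sqrt{\alpha\beta}$, so $(P(0)+P_{\max})\sqrt{\alpha\beta}\le P_{\max}$, which is the claimed bound since $n=\abs{X}$.

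The step I expect to be most delicate is the equality discussion in part~(ii), where one must verify that every inequality above is simultaneously tight. Equality forces $\alpha=\beta$ (from the two AM--GM steps); $P(k)b_kc_k\le 0$ for all $k\ne 0$ and $\abs{P(k)}=P_{\max}$ whenever $b_kc_k\ne 0$ (from the two term-by-term estimates); and proportionality of $(\abs{b_k})_{k\ne 0}$ and $(\abs{c_k})_{k\ne 0}$ (from Cauchy--Schwarz). A routine check shows that in the non-degenerate case $P_{\max}>0$ the proportionality constant is positive, so $b_k$ and $c_k$ vanish together; combined with the previous constraint this yields $b_k=c_k=0$ for all $k\ne 0$ with $\abs{P(k)}\ne P_{\max}$, hence $1_Y,1_Z\in\ang{\{v_0\}\cup\{v_k:\abs{P(k)}=P_{\max}\}}$. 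Apart from this bookkeeping, everything is the standard weighted Hoffman bound.
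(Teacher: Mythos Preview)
Your argument is correct and is exactly the standard weighted Hoffman/ratio-bound computation. Note that the paper does not give its own proof of this proposition at all: it simply quotes the statement from Ellis--Friedgut--Pilpel~\cite[\S 2.4]{EllFriPil2011}, and your write-up reproduces that argument faithfully (form the weighted matrix $A=\sum_i w_iA_i$, use $1_Y^TA\,1_Y=0$ respectively $1_Y^TA\,1_Z=0$, expand in the common eigenbasis, and bound termwise with Cauchy--Schwarz and AM--GM in the cross case).
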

\par
In order to study graphs induced by $ G_n$ and their eigenvalues, we shall bring the theory of association schemes into play. We refer to~\cite{BanIto1984} and~\cite{GodMea2016} for background on association schemes. Every finite group gives rise to an association scheme (see~\cite[Section 2.7]{BanIto1984} or~\cite[Section 3.3]{GodMea2016} for details). We shall recall relevant background about this association scheme and its symmetrisation for~$ G_n$. 
\par
For each $\usigma\in\Lambda_n$, we define $B_{\usigma}=\CC( G_n,G_n)$ by
\[
B_{\usigma}(x,y)=\begin{cases}
1 & \text{for $x^{-1}y\in C_{\usigma}$}\\
0 & \text{otherwise}.
\end{cases}
\]
The vector space generated by $\{B_{\usigma}:\usigma\in\Lambda_n\}$ over the complex numbers turns out to be a commutative matrix algebra $\AA$, which contains the identity and the all-ones matrix and is closed under conjugate transposition. The collection of zero-one matrices~$B_{\usigma}$ therefore defines an association scheme. Since $\AA$ is commutative, it can be simultaneously diagonalised and therefore there exists a basis $\{F_{\ulambda}:\ulambda\in\Lambda_n\}$ of $\AA$ consisting of primitive idempotent matrices. These matrices are given by~\cite[Theorem~II.7.2]{BanIto1984}
\begin{equation}
F_{\ulambda}\,=\frac{\chi^{\ulambda}(1)}{\abs{ G_n}}\sum_{\usigma\in\Lambda_n}\chi^{\ulambda}_{\usigma}\,B_{\usigma}.   \label{eqn:def_F_lambda}
\end{equation}
Using the orthogonality of characters of the second kind, it is readily verified that
\begin{equation}
B_{\usigma}=\sum_{\ulambda\in\Lambda_n}\frac{\abs{C_{\usigma}}}{\chi^{\ulambda}(1)}\,\overline{\chi}^{\ulambda}_{\usigma}\,F_{\ulambda},   \label{eqn:B_from_F}
\end{equation}
where $\overline{\chi}^{\ulambda}$ is the character of $ G_n$ whose values at $g\in  G_n$ are the complex conjugates of~$\chi^{\ulambda}(g)$.
\par
For each $f\in\Phi$, let $f^*\in\Phi$ be its reciprocal polynomial, namely the monic polynomial whose roots (in an algebraic closure of $\FF_q$) are precisely the inverses of the roots of $f$. For each $\ulambda\in\Lambda_n$, define $\ulambda^*$ to be the element of $\Lambda_n$ given by $\ulambda^*(f)=\ulambda(f^*)$ for all $f\in\Phi$. We record the following lemma, in which we write $C^{-1}_{\usigma}=\{g^{-1}:g\in C_{\usigma}\}$ for $\usigma\in\Lambda_n$.
\begin{lemma}
\label{lem:conjugates}
Let $\usigma,\ulambda\in\Lambda_n$. Then we have
\begin{enumerate}[(i)]
\item $C_{\usigma^*}=C_{\usigma}^{-1}$,\\[-1.2ex]
\item $\chi^{\ulambda^*}=\overline{\chi}^{\ulambda}$,\\[-1ex]
\item $\chi^{\ulambda^*}_{\usigma}=\chi^{\ulambda}_{\usigma^*}$.
\end{enumerate}
\end{lemma}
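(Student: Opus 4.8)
The plan is to prove the three parts in the order stated, deriving (ii) and (iii) from (i) together with basic properties of the characters $\chi^{\ulambda}$ and the parabolic induction product $\odot$. For part~(i), I would work with Jordan canonical forms: a matrix $g$ lies in $C_{\usigma}$ precisely when its Jordan form is $R_{\usigma}$, so it suffices to show that if $g$ has Jordan form $R_{\usigma}$ then $g^{-1}$ has Jordan form $R_{\usigma^*}$. Since $g$ and $g^{-1}$ are conjugate to $R_{\usigma}$ and $R_{\usigma}^{-1}$ respectively, I would check that $R_{\usigma}^{-1}$ is conjugate to $R_{\usigma^*}$. This reduces (by the block structure of $R_{\usigma}$ and the fact that inversion respects the primary decomposition) to a statement about single generalized Jordan blocks: the inverse of $C(f,k)$ is conjugate to $C(f^*,k)$. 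This last fact is classical — the rational canonical form of $g^{-1}$ has invariant factors obtained from those of $g$ by the substitution $X\mapsto X^{-1}$ followed by renormalisation to monic polynomials, which sends $f$ to $f^*$ and preserves the partition data $\usigma(f)$ — so I would cite it or give the short linear-algebra argument that $v$ is a generalized eigenvector of $g$ for the $f$-primary part iff it is one of $g^{-1}$ for the $f^*$-primary part, with the same partition shape.

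For part~(ii), I would first establish it for the building-block primary characters and then propagate through~\eqref{eqn:irr_char_induction}. For a single $f\in\Phi$, the characters $\chi^{f\mapsto\lambda}$ are obtained from James's construction; the key point is that $\overline{\chi^{f\mapsto\lambda}} = \chi^{f^*\mapsto\lambda}$, which follows because complex conjugation of a character corresponds to replacing each defining linear character of $\FF_{q^d}^*$ by its inverse (Galois-conjugating the relevant roots), and on the level of the polynomial $f$ this is exactly the passage $f\mapsto f^*$; the partition $\lambda$ is unaffected since it records only multiplicities. Given this, and since $\overline{\phi_1\odot\cdots\odot\phi_k}=\overline{\phi_1}\odot\cdots\odot\overline{\phi_k}$ (conjugation commutes with induction from $P_\lambda$ because the formula~\eqref{eqn:induction_formula} is real-linear and $P_\lambda$ is defined over $\FF_q$), equation~\eqref{eqn:irr_char_induction} gives
\[
\overline{\chi^{\ulambda}}=\bigodot_{f\in\Phi}\overline{\chi^{f\mapsto\ulambda(f)}}=\bigodot_{f\in\Phi}\chi^{f^*\mapsto\ulambda(f)}=\bigodot_{f\in\Phi}\chi^{f\mapsto\ulambda^*(f)}=\chi^{\ulambda^*},
\]
where the third equality reindexes the product over $\Phi$ via $f\mapsto f^*$ and uses the definition $\ulambda^*(f)=\ulambda(f^*)$.

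Part~(iii) is then a one-line consequence of (i) and (ii): evaluating the identity $\chi^{\ulambda^*}=\overline{\chi^{\ulambda}}$ on the conjugacy class $C_{\usigma}$, and using that for any character $\psi$ and any $g$ one has $\overline{\psi(g)}=\psi(g^{-1})$, we get
\[
\chi^{\ulambda^*}_{\usigma}=\overline{\chi^{\ulambda}_{\usigma}}=\chi^{\ulambda}(g^{-1})
\]
for any $g\in C_{\usigma}$; since $g^{-1}\in C_{\usigma}^{-1}=C_{\usigma^*}$ by part~(i), this equals $\chi^{\ulambda}_{\usigma^*}$, as required.

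I expect the main obstacle to be part~(i), specifically pinning down cleanly that inversion sends the $f$-primary Jordan structure with partition $\usigma(f)$ to the $f^*$-primary structure with the \emph{same} partition, rather than some rearranged one. The cleanest route is probably to argue at the level of $\FF_q[X]$-module structure: $\FF_q^n$ as an $\FF_q[g]$-module decomposes as $\bigoplus_f \FF_q[X]/(f^{\usigma(f)_j})$ over the primary components, and the $\FF_q[g^{-1}]$-module structure is obtained by the ring isomorphism $\FF_q[X]\to\FF_q[X]$, $X\mapsto X^{-1}$ (which is well-defined on $\FF_q[g]\cong\FF_q[g^{-1}]$ since $g$ is invertible), under which $(f)\mapsto(f^*)$ and exponents are preserved — this immediately gives the claim. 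I would present it this way to avoid manipulating the explicit block matrices $C(f,k)$.
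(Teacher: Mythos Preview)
Your proposal is correct and aligns with the paper's treatment: the paper simply records that (i) is a standard linear-algebra fact, that (ii) is essentially \cite[(7.32)]{Jam1986}, and that (iii) follows from (i) and (ii), which is precisely the deduction you spell out. Your module-theoretic argument for (i) and your reduction of (ii) to the primary case via compatibility of conjugation with $\odot$ are the natural ways to unpack those citations; the only cosmetic point is that in (ii) the compatibility $\overline{\Ind_H^G(\phi)}=\Ind_H^G(\overline{\phi})$ follows immediately from conjugating the induction formula and needs no reference to $P_\lambda$ being defined over $\FF_q$.
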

\begin{proof}
Statement (i) is a basic fact in linear algebra, (ii) is essentially~\cite[(7.32)]{Jam1986}, and~(iii) can be deduced from (i) and (ii).
\end{proof}
\par
Let $\Omega_n$ be the subset of $\Lambda_n$ that contains all $\ulambda\in\Lambda_n$ satisfying $\ulambda=\ulambda^*$ and precisely one of $\ulambda$ or $\ulambda^*$ for all $\ulambda\in\Lambda_n$ satisfying $\ulambda\ne\ulambda^*$. For $\ulambda\in\Omega_n$, we define the character
\[
\psi^{\ulambda}=\begin{cases}
\chi^{\ulambda} & \text{for $\ulambda=\ulambda^*$}\\
\chi^{\ulambda}+\chi^{\ulambda^*} & \text{otherwise},
\end{cases}
\]
and, for $\usigma\in\Omega_n$, we define $D_{\usigma}=C_{\usigma}\cup C_{\usigma^*}$. Lemma~\ref{lem:conjugates} implies that $\psi^{\ulambda}$ is constant on~$D_{\usigma}$.  We write
\begin{equation}
\psi^{\ulambda}_{\usigma}=\psi^{\ulambda}(g),\quad\text{where $g$ is an arbitrary element of $D_{\usigma}$}.   \label{eqn:psi_constant_on_D}
\end{equation}
For $\usigma,\ulambda\in\Omega_n$, write 
\begin{equation}
A_{\usigma}=\begin{cases}
B_{\usigma} & \text{for $\usigma=\usigma^*$}\\
B_{\usigma}+B_{\usigma^*} & \text{otherwise}
\end{cases}
\quad\text{and}\quad
E_{\ulambda}=\begin{cases}
F_{\ulambda} & \text{for $\ulambda=\ulambda^*$}\\
F_{\ulambda}+F_{\ulambda^*} & \text{otherwise}.
\end{cases}
\label{eqn:def_A_and_E}
\end{equation}
Note that $A_{\usigma}$ is symmetric, so all of its eigenvalues are real, and that  $E_{\ulambda}$ has only real entries. Let $V_{\ulambda}$ be the column span over the reals of $E_{\ulambda}$ and, for $\usigma,\ulambda\in\Omega_n$, write
\begin{equation}
P(\ulambda,\usigma)=\frac{\abs{D_{\usigma}}}{\psi^{\ulambda}(1)}\;\psi^{\ulambda}_{\usigma}.      \label{eqn:ev_graph}
\end{equation}
The following lemma, containing essentially standard results, will be crucial in the following.
\begin{lemma}
\label{lem:decomposition_RX}
We have the following orthogonal direct sum decomposition
\[
\RR(G_n)=\bigoplus_{\ulambda\in\Omega_n}V_{\ulambda}.
\]
Moreover, for all $\usigma,\ulambda\in\Omega_n$, every element of $V_{\ulambda}$ is an eigenvector of $A_{\usigma}$ and the corresponding eigenvalue is $P(\ulambda,\usigma)$.
\end{lemma}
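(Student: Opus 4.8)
The plan is to derive Lemma~\ref{lem:decomposition_RX} from the corresponding statement for the (non-symmetrised) association scheme defined by the matrices $B_{\usigma}$, and then pass to the symmetrisation. First I would record the standard facts about the commutative algebra $\AA$: the primitive idempotents $F_{\ulambda}$ satisfy $F_{\ulambda}F_{\umu}=\delta_{\ulambda\umu}F_{\ulambda}$ and $\sum_{\ulambda}F_{\ulambda}=I$, so their column spaces give an orthogonal (with respect to the standard inner product on $\CC(G_n)$) direct sum decomposition $\CC(G_n)=\bigoplus_{\ulambda\in\Lambda_n}W_{\ulambda}$, where $W_{\ulambda}$ is the column span of $F_{\ulambda}$; moreover, by~\eqref{eqn:B_from_F}, every vector in $W_{\ulambda}$ is an eigenvector of $B_{\usigma}$ with eigenvalue $\frac{\abs{C_{\usigma}}}{\chi^{\ulambda}(1)}\,\overline{\chi}^{\ulambda}_{\usigma}$. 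These are consequences of \cite[Theorem~II.7.2]{BanIto1984} together with the second orthogonality relation for characters, already invoked in the text.

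\textbf{Passage to the real symmetrised scheme.} The next step is to understand how complex conjugation acts. Using Lemma~\ref{lem:conjugates}, one checks that $\overline{F_{\ulambda}}=F_{\ulambda^*}$ (take complex conjugates in~\eqref{eqn:def_F_lambda}, use $\overline{\chi^{\ulambda}_{\usigma}}=\chi^{\ulambda^*}_{\usigma}$ from (ii) together with $B_{\usigma}$ being real and $\chi^{\ulambda^*}(1)=\chi^{\ulambda}(1)$). Hence for $\ulambda\in\Omega_n$ the matrix $E_{\ulambda}$ defined in~\eqref{eqn:def_A_and_E} is a real symmetric idempotent: when $\ulambda=\ulambda^*$ it equals the real matrix $F_{\ulambda}$, and when $\ulambda\ne\ulambda^*$ it equals $F_{\ulambda}+\overline{F_{\ulambda}}$, which is real, and is idempotent because $F_{\ulambda}F_{\ulambda^*}=0$. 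The $E_{\ulambda}$ are pairwise orthogonal and sum to $I$, so their real column spans $V_{\ulambda}$ give the claimed orthogonal direct sum $\RR(G_n)=\bigoplus_{\ulambda\in\Omega_n}V_{\ulambda}$; one should note that $\dim_\RR V_{\ulambda}$ equals $\chi^{\ulambda}(1)$ when $\ulambda=\ulambda^*$ and $2\chi^{\ulambda}(1)$ otherwise, and that these dimensions add up to $\abs{G_n}$, which confirms nothing is lost. (Here one uses that $W_{\ulambda}\oplus W_{\ulambda^*}$, being conjugation-stable, has a real form whose real dimension equals its complex dimension.)

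\textbf{The eigenvalue computation.} Finally I would verify the eigenvalue claim. For $v\in V_{\ulambda}$ and $\usigma=\usigma^*$, write $A_{\usigma}=B_{\usigma}$; since $v$ lies in $W_{\ulambda}$ (or in $W_{\ulambda}\oplus W_{\ulambda^*}$), apply the eigenvalue formula from the non-symmetrised scheme to each summand. When $\ulambda=\ulambda^*$ the eigenvalue is $\frac{\abs{C_{\usigma}}}{\chi^{\ulambda}(1)}\overline{\chi}^{\ulambda}_{\usigma}$, which by Lemma~\ref{lem:conjugates}(ii)--(iii) equals $\frac{\abs{C_{\usigma}}}{\chi^{\ulambda}(1)}\chi^{\ulambda}_{\usigma^*}=\frac{\abs{C_{\usigma}}}{\chi^{\ulambda}(1)}\chi^{\ulambda}_{\usigma}$ since $\usigma=\usigma^*$; as $\abs{D_{\usigma}}=\abs{C_{\usigma}}$ and $\psi^{\ulambda}=\chi^{\ulambda}$ here, this matches $P(\ulambda,\usigma)$ in~\eqref{eqn:ev_graph}. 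The remaining three combinations ($\ulambda=\ulambda^*,\usigma\ne\usigma^*$; $\ulambda\ne\ulambda^*,\usigma=\usigma^*$; and both unequal) are handled the same way, each time checking that the contributions from $W_{\ulambda}$ and $W_{\ulambda^*}$ agree and combine, using $\abs{D_{\usigma}}=\abs{C_{\usigma}}+\abs{C_{\usigma^*}}$, $\psi^{\ulambda}_{\usigma}=\chi^{\ulambda}_{\usigma}+\chi^{\ulambda^*}_{\usigma}$, and $\abs{C_{\usigma}}=\abs{C_{\usigma^*}}$ (from Lemma~\ref{lem:conjugates}(i)). The only mildly delicate point—the \emph{main obstacle}, such as it is—is the bookkeeping needed to see that a real vector in $W_{\ulambda}\oplus W_{\ulambda^*}$ is genuinely an eigenvector of the real matrix $A_{\usigma}$ with a \emph{single} real eigenvalue $P(\ulambda,\usigma)$, rather than a sum of eigenvectors with conjugate eigenvalues; this works precisely because the eigenvalue of $A_{\usigma}$ on $W_{\ulambda}$ is already real (it is $\frac{\abs{D_{\usigma}}}{\chi^{\ulambda}(1)}\psi^{\ulambda}_{\usigma}$, by the computation above) and is symmetric under swapping $\ulambda\leftrightarrow\ulambda^*$. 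Everything else is routine manipulation of the idempotent relations and the character identities in Lemma~\ref{lem:conjugates}.
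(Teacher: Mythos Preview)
Your approach is essentially the same as the paper's: show that the $E_{\ulambda}$ are pairwise orthogonal real idempotents summing to the identity, deduce the decomposition, and then verify the eigenvalue formula. One slip: you write $\dim_\RR V_{\ulambda}=\chi^{\ulambda}(1)$ (or $2\chi^{\ulambda}(1)$), but the rank of an idempotent equals its trace, and from~\eqref{eqn:def_F_lambda} the trace of $F_{\ulambda}$ is $\chi^{\ulambda}(1)^2$, so the correct dimensions are $\chi^{\ulambda}(1)^2$ and $2\chi^{\ulambda}(1)^2$; it is these that sum to $\abs{G_n}$ via $\sum_{\ulambda}\chi^{\ulambda}(1)^2=\abs{G_n}$. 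This does not damage your argument, since $\sum_{\ulambda\in\Omega_n}E_{\ulambda}=I$ already yields the decomposition without any dimension count. For the eigenvalue claim, the paper bypasses your four-way case split by verifying directly from~\eqref{eqn:B_from_F} and Lemma~\ref{lem:conjugates} that $A_{\usigma}=\sum_{\ulambda\in\Omega_n}P(\ulambda,\usigma)\,E_{\ulambda}$, which is a bit cleaner but amounts to the same computation.
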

\begin{proof}
Since $F_{\ulambda}$ is a primitive idempotent in $\CC(G_n,G_n)$ for each $\ulambda\in\Lambda_n$, it is readily verified that $E_{\ulambda}$ is a primitive idempotent in $\RR(G_n,G_n)$ for each $\ulambda\in\Omega_n$. Therefore the $E_{\ulambda}$ are  pairwise orthogonal, namely we have $E_{\ulambda}E_{\umu}=\delta_{\ulambda\umu}E_{\ulambda}$ for all $\ulambda,\umu\in\Omega_n$. Since $E_{\ulambda}$ is idempotent, $\rk(E_{\ulambda})$ is just the trace of~$E_{\ulambda}$. It follows from~\eqref{eqn:def_F_lambda} that the trace of $F_{\ulambda}$ equals $\chi^{\ulambda}(1)^2$. Hence we have
\[
\sum_{\ulambda\in\Omega_n}\dim V_{\ulambda}=\sum_{\ulambda\in\Omega_n}\rk(E_{\ulambda})=\sum_{\ulambda\in\Lambda_n}\chi^{\ulambda}(1)^2=\abs{ G_n}
\]
by standard properties of the degrees of irreducible characters. This proves the first statement.  We have $\chi^{\ulambda}(1)=\chi^{\ulambda^*}(1)$ by Lemma~\ref{lem:degrees_characters}, from which together with~\eqref{eqn:B_from_F} and Lemma~\ref{lem:conjugates} it is readily verified that 
\[
A_{\usigma}=\sum_{\ulambda\in\Omega_n}P(\ulambda,\usigma)\,E_{\ulambda}.
\]
Since the $E_{\ulambda}$ are  pairwise orthogonal, we obtain the second statement.
\end{proof}
\par
In fact the proof of Lemma~\ref{lem:decomposition_RX} shows that $\{A_{\usigma}:\usigma\in\Omega_n\}$ is a symmetric association scheme with primitive idempotents given by $\{E_{\ulambda}:\ulambda\in\Omega_n\}$. However we will not exploit this further.
\par
Note that $A_{\usigma}$ is the adjacency matrix of a $\abs{D_{\usigma}}$-regular graph for each $\usigma\in\Omega_n$, except for $\usigma$ given by $\usigma(1)=(1^n)$, and that $P(\ulambda,\usigma)=\abs{D_{\usigma}}$ if $\ulambda\in\Omega_n$ is given by $X-1\mapsto (n)$. 
\par
The strategy to prove Theorems~\ref{thm:main_intersection} and~\ref{thm:main_cross_intersection} is as follows (Theorems~\ref{thm:main_intersection_space} and~\ref{thm:main_cross_intersection_space} will be proved using slight modifications). We call an element $x\in G$ a \emph{$t$-derangement} if there is no $t$-tuple of linearly independent elements of $\FF_q^n$ that is fixed by $x$. Equivalently $x\in  G_n$ is a $t$-derangement if $\rk(x-I)>n-t$. It is readily verified that either all elements of $D_{\usigma}$ are $t$-derangements or none of them. We wish to identify an appropriate subset $\Sigma$ of $\Omega_n$ such that $D_{\usigma}$ consists of $t$-derangements for all $\usigma\in\Sigma$ and then apply Proposition~\ref{pro:Hoffman} to the graph $\Gamma$ with adjacency matrix $\sum_{\usigma\in\Sigma}A_{\usigma}$ and $\abs{D_{\usigma}}$-regular spanning subgraphs $\Gamma_{\usigma}$ having adjacency matrix $A_{\usigma}$ for $\usigma\in\Sigma$. In view of Lemma~\ref{lem:decomposition_RX}, we wish to construct some $w\in\RR(\Sigma)$ such that both the minimum value and the negative of the second-largest absolute value over all $\ulambda\in\Omega_n$ of 
\begin{equation}
\sum_{\usigma\in\Sigma}w(\usigma)P(\ulambda,\usigma)   \label{eqn:LC_eigenvalues}
\end{equation}
equals
\begin{equation}
\eta=-\frac{1}{(q^n-1)(q^n-q)\cdots(q^n-q^{t-1})-1}   \label{eqn:def_min_ev}
\end{equation}
and such that $w$ is normalised in the sense that~\eqref{eqn:LC_eigenvalues} equals $1$ if $\psi^{\ulambda}$ is the trivial character (or equivalently $\ulambda\in\Omega_n$ is given by $X-1\mapsto (n)$). This will ensure that Proposition~\ref{pro:Hoffman} will give the bounds of Theorems~\ref{thm:main_intersection} and~\ref{thm:main_cross_intersection}.


\section{An invertible matrix}
\label{sec:matrix}

This section contains some key preparations for our main proofs. We first identify relevant conjugacy classes of $G_n$ whose elements are either $t$-derangements or do not fix a $t$-space. We then use these conjugacy classes to identify a matrix related to the character table of $G_n$. A key step is to show that this matrix is invertible.
\par
We call an element of $G_n$ \emph{regular elliptic} if its characteristic polynomial is irreducible. The following lemma shows that regular elliptic elements in $ G_n$ play the role of an $n$-cycle in the symmetric group $S_n$. 
\begin{lemma}[{\cite[Prop.~4.4]{LewReiSta2014}}]
\label{lem:Cf_fixing_subspace}
Each regular elliptic element of $ G_n$ fixes no proper nontrivial subspace of $\FF_q^n$. 
\end{lemma}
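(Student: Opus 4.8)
The plan is to argue by contradiction using the multiplicativity of the characteristic polynomial over invariant subspaces. Suppose $g\in G_n$ is regular elliptic, so its characteristic polynomial $f$ is irreducible over $\FF_q$ of degree $n$, and suppose for contradiction that $U$ is a $g$-invariant subspace of $\FF_q^n$ with $0<\dim U<n$. First I would choose a basis of $U$ and extend it to a basis of $\FF_q^n$; with respect to this basis the matrix of $g$ is block upper triangular,
\[
\begin{bmatrix} A & B\\ 0 & C\end{bmatrix},
\]
with $A$ the matrix of the restriction $g|_U$, precisely because $U$ is $g$-invariant. Then $f(X)=\det(XI-g)=\det(XI-A)\cdot\det(XI-C)$, where the two factors have degrees $\dim U$ and $n-\dim U$ respectively, both positive. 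This is a nontrivial factorisation of $f$ over $\FF_q$, contradicting irreducibility.

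An alternative, module-theoretic phrasing: regard $V=\FF_q^n$ as a module over $\FF_q[X]$ with $X$ acting as $g$. Since $f$ is irreducible of degree $n=\dim V$, it is simultaneously the characteristic and the minimal polynomial of $g$, so $V\cong\FF_q[X]/(f)$, which is a field isomorphic to $\FF_{q^n}$. The $g$-invariant subspaces of $V$ are exactly the $\FF_q[X]$-submodules of $V$, i.e.\ the ideals of this field, and a field has only the zero ideal and itself; hence no proper nontrivial $g$-invariant subspace exists.

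There is essentially no obstacle here: both arguments are elementary. The only points requiring minor care in the first argument are that the characteristic polynomial is a conjugation invariant (so the change of basis is harmless) and that $g$-invariance of $U$ is exactly what forces the lower-left block to vanish. For the paper's purposes the purely matrix-theoretic argument is the most self-contained, and I would present that one.
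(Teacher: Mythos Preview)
Your proof is correct; both the block upper-triangular/characteristic polynomial argument and the $\FF_q[X]$-module argument are standard and valid. Note, however, that the paper does not actually prove this lemma: it is stated with a citation to \cite[Prop.~4.4]{LewReiSta2014} and used as a black box, so there is no ``paper's own proof'' to compare against. Your self-contained argument is exactly what one would supply if asked to fill in the details.
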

\par
Note that, for each $f\in\Phi$ of degree $d$, its companion matrix satisfies $\det(C_f)=(-1)^df(0)$. It is well known~\cite{HanMul1992} that, for each $a\in\FF_q^*$, there exists an irreducible polynomial $f\in\FF_q[x]$ of degree $d$ such that $f(0)=a$. Hence we can always find a polynomial in $\Phi$ with prescribed degree and prescribed nonzero determinant of its companion matrix. Also note that, for each $f\in\Phi$, we have $f(0)f^*(0)=1$ and therefore
\[
\det(C_f)\det(C_{f^*})=1.
\]
\par
We now continue to use $\alpha$ to denote a fixed generator of $\FF_q^*$. For all integers $\ell,j$ satisfying $0\le \ell<n$ and $0\le j\le q-2$, we fix an irreducible polynomial~$h_{\ell,j}\in\Phi$ of degree $n-\ell$ such that its companion matrix has determinant $\alpha^j$ and such that $h_{\ell,j}^*=h_{\ell,-j}$. We define
\[
\Sigma_{\ell,j}=\{\text{$\usigma\in\Lambda_n:\usigma(h_{\ell,j})=(1)$}\}.
\]
and 
\[
\Sigma_\ell=\bigcup_{j=0}^{q-2}\Sigma_{\ell,j}\quad\text{and}\quad \Sigma_{\le t}=\bigcup_{\ell=0}^t\Sigma_\ell.
\]
Note that, for each $\usigma\in\Sigma_{\le t-1}$, the conjugacy class $C_{\usigma}$ consists of elements that do not fix a $t$-space of $\FF_q^n$. In addition, for each $\usigma\in\Sigma_t$ with the $q-1$ exceptions $\usigma\in\Sigma_t$ satisfying $\usigma(X-1)=(1^t)$, the conjugacy class $C_{\usigma}$ consists of elements that do not fix a $t$-space pointwise. Next we define
\[
\Pi_{k,i}=\{\ulambda\in\Lambda_n:\ulambda(\alpha^i)_1=n-k\}.
\]
and
\[
\Pi_k=\bigcup_{i=0}^{q-2}\Pi_{k,i}\quad\text{and}\quad \Pi_{\le t}=\bigcup_{k=0}^t\Pi_k.
\]
Note that, for $k<n/2$, we have $\abs{\Pi_{k,i}}=\abs{\Sigma_{k,i}}$ and $\abs{\Omega_n\cap\Pi_{k,i}}=\abs{\Omega_n\cap\Sigma_{k,i}}$.
\par
We define $Q\in\RR(\Omega_n,\Omega_n)$ by
\[
Q(\ulambda,\usigma)=\psi^{\ulambda}_{\usigma}\quad\text{for each $\ulambda,\usigma\in\Omega_n$}
\]
and let $Q_t$ be the restriction of $Q$ to $\RR(\Omega_n\cap\Pi_{\le t},\Omega_n\cap\Sigma_{\le t})$. We emphasise that $Q_t$ is a square matrix. A key step in our proof is the following proposition. 
\begin{proposition}
\label{pro:matrix_full_rank}
For $n>2t$, the matrix $Q_t$ has full rank and is independent of $n$. 
\end{proposition}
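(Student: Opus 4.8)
The plan is to pass to a more convenient basis for the rows of $Q_t$, exhibit the resulting matrix as block upper triangular, and identify its diagonal blocks as Kronecker products of small invertible matrices. First I would replace the rows of $Q_t$, which are the characters $\psi^{\ulambda}$ with $\ulambda\in\Omega_n\cap\Pi_{\le t}$, by the characters $\xi^{\umu}$. By~\eqref{eqn:uchi_from_uxi} and~\eqref{eqn:Kostka_inv_triangular} the expansion $\chi^{\ulambda}=\sum_{\umu}H_{\umu\ulambda}\xi^{\umu}$ runs over $\umu\sim\ulambda$ with $\umu(f)\unrhd\ulambda(f)$ for all $f$; in particular $\umu(\alpha^i)_1\ge\ulambda(\alpha^i)_1$, so every such $\umu$ again lies in $\Pi_{\le t}$, and (using~\eqref{eqn:uxi_from_uchi} for the reverse direction) the matrix $H\vert_{\Pi_{\le t}}=(H_{\umu\ulambda})_{\umu,\ulambda\in\Pi_{\le t}}$ is triangular with respect to dominance with ones on the diagonal, hence invertible. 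By Lemma~\ref{lem:conjugates} the involution $\ulambda\mapsto\ulambda^*$ preserves $\Pi_{\le t}$ and $\Sigma_{\le t}$, and the coordinate swap $\usigma\leftrightarrow\usigma^*$ carries the $\ulambda$-row of the character table to its $\ulambda^*$-row; so the $\psi$-rows over $\Omega_n$ are exactly the images of the symmetrised standard basis vectors under the linear map given by $T:=(\chi^{\ulambda}_{\usigma})_{\ulambda\in\Pi_{\le t},\,\usigma\in\Sigma_{\le t}}$, whence $Q_t$ is invertible (and $n$-independent) if $T$ is. Since $T=H\vert_{\Pi_{\le t}}^{\mathsf T}\,\widetilde Q$ with $\widetilde Q:=(\xi^{\umu}_{\usigma})_{\umu\in\Pi_{\le t},\,\usigma\in\Sigma_{\le t}}$, it suffices to show that $\widetilde Q$ is invertible and that $\widetilde Q$ and $H\vert_{\Pi_{\le t}}$ are independent of $n$; the remaining passage from $T$ to $Q_t$ is the $*$-symmetrisation and changes neither property.

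Next I would set up the $n$-free bookkeeping: for $n>2t$ the unions $\Sigma_{\le t}=\bigsqcup_{\ell,j}\Sigma_{\ell,j}$ and $\Pi_{\le t}=\bigsqcup_{k,i}\Pi_{k,i}$ are disjoint, and deleting the large constituent gives $*$-equivariant, $n$-independent bijections $\Sigma_{\ell,j}\to\Lambda_\ell$, $\usigma\mapsto\usigma'$ (restrict $\usigma$ off $h_{\ell,j}$), and $\Pi_{k,i}\to\Lambda_k$, $\ulambda\mapsto\ulambda'$ (delete the part $n-k$ from $\ulambda(\alpha^i)$). The crucial computation is the evaluation of $\xi^{\umu}_{\usigma}$. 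For $\umu\in\Pi_{k,i}$ we have $\umu(\alpha^i)_1=n-k$; peeling this part off via~\eqref{eqn:xi_induction_from_parts} and~\eqref{eqn:xi_induction}, and using (by~\eqref{eqn:xi_chi} and Lemmas~\ref{lem:degrees_characters} and~\ref{lem:det_char}) that $\xi^{X-\alpha^i\mapsto(m)}=\chi^{X-\alpha^i\mapsto(m)}$ is the linear character $g\mapsto\theta(\det g^i)$ of $G_m$, yields $\xi^{\umu}=\big(g\mapsto\theta(\det g^i)\big)\odot\xi^{\umu'}$ with $\umu'\in\Lambda_k$. Evaluating on $R_{\usigma}$ through the induction formula~\eqref{eqn:induction_formula} (Mackey's formula for the stabiliser of an $(n-k)$-dimensional subspace) gives
\[
\xi^{\umu}_{\usigma}=\sum_{W}\theta\big(\det(R_{\usigma}\vert_W)^i\big)\,\xi^{\umu'}\big(R_{\usigma}\vert_{\FF_q^n/W}\big),
\]
the sum over $R_{\usigma}$-invariant subspaces $W$ of dimension $n-k$. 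For $\usigma\in\Sigma_{\ell,j}$ the matrix $R_{\usigma}$ is conjugate to $C(h_{\ell,j})\oplus R_{\usigma'}$, where $C(h_{\ell,j})$ acts on an irreducible $\FF_q[X]$-module $V_{\mathrm{big}}$ of dimension $n-\ell>\ell$ and $h_{\ell,j}$ does not divide the characteristic polynomial of $R_{\usigma'}$; hence every invariant $W$ splits as $W=W_0\oplus W'$ with $W_0\in\{0,V_{\mathrm{big}}\}$ and $W'\le\FF_q^\ell$ invariant under $R_{\usigma'}$. Since $\dim W=n-k>\ell$, this forces $W_0=V_{\mathrm{big}}$ and $\dim W'=\ell-k$, so the sum is empty when $k>\ell$; when $k\le\ell$, using $\det C(h_{\ell,j})=\alpha^j$ and reassembling the sum over $W'$ as a parabolic induction inside $G_\ell$ gives
\[
\xi^{\umu}_{\usigma}=\theta(\alpha^{ij})\,\big(\xi^{X-\alpha^i\mapsto(\ell-k)}\odot\xi^{\umu'}\big)(R_{\usigma'})=\theta(\alpha^{ij})\,\xi^{\widehat{\umu}}(R_{\usigma'}),
\]
where $\widehat{\umu}\in\Lambda_\ell$ arises from $\umu'$ by inserting a part $\ell-k$. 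As the right-hand side is a root of unity times a character value of the fixed group $G_\ell$, the matrix $\widetilde Q$ does not depend on $n$.

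To conclude, I would order $\Pi_{\le t}$ and $\Sigma_{\le t}$ by the value of $k$ and of $\ell$; by the previous step $\widetilde Q$ is block upper triangular with square diagonal blocks, the block for $k=\ell$ having size $(q-1)\abs{\Lambda_k}$. On this diagonal $\widehat{\umu}=\umu'$, so the $(i,\umu')$-by-$(j,\usigma')$ entry of the $k$-th block is $\theta(\alpha^{ij})\,\xi^{\umu'}(R_{\usigma'})$; that is, this block is the Kronecker product of the $(q-1)\times(q-1)$ Vandermonde matrix $(\theta(\alpha^{ij}))$ (the character table of the cyclic group of order $q-1$) with $(\xi^{\umu'}(R_{\usigma'}))_{\umu',\usigma'\in\Lambda_k}$, and the latter is invertible because by~\eqref{eqn:uxi_from_uchi} and~\eqref{eqn:Kostka_triangular} the $\xi^{\umu}$ with $\umu\in\Lambda_k$ form a basis of the class functions on $G_k$. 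Hence each diagonal block, and therefore $\widetilde Q$, is invertible. The $n$-independence of $H\vert_{\Pi_{\le t}}$ follows from the stability of the inverse Kostka numbers under lengthening the first row: expanding the Jacobi--Trudi determinant of $s_{(n-k)\cup\bar\lambda}$ along its first row writes it as $\sum_b h_{\,n-k-1+b}\,g_b$ with each $g_b$ of bounded degree and independent of $n$, so that $H_{(n-k')\cup\bar\mu,\,(n-k)\cup\bar\lambda}$ depends only on $k-k'$, $\bar\mu$ and $\bar\lambda$.

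The step I expect to be the main obstacle is the evaluation of $\xi^{\umu}_{\usigma}$: one has to exploit that a regular elliptic block of size larger than $2\ell$ acts as a single irreducible ``brick'' coprime to the rest of $R_{\usigma}$, which is precisely what forces both the vanishing when $k>\ell$ and the clean factorisation when $k\le\ell$; the inverse Kostka stability and the $*$-symmetrisation are minor technical points by comparison.
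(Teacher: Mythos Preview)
Your proposal is correct and follows essentially the same route as the paper: reduce from $Q_t$ to the $\chi$-table $R_t$ via the $*$-symmetrisation, pass to the $\xi$-basis through the (inverse) Kostka transition, prove the key reduction $\xi^{\umu}_{\usigma}=\omega^{ij}\,\xi^{\unu}_{\utau}$ (the paper's Lemma~\ref{lem:xi_reduction}), and read off block upper-triangularity with diagonal blocks equal to a Vandermonde matrix Kronecker a $G_\ell$-character table. The only notable difference is that you establish the reduction lemma by rewriting parabolic induction as a sum over invariant subspaces and using the coprime $\FF_q[X]$-module splitting $V_{\mathrm{big}}\oplus\FF_q^\ell$, whereas the paper instead proves a separate lemma (Lemma~\ref{lem:phi_psi_ind}) that $(\phi\odot\psi)(g)=\phi(\pi_1 g)\psi(\pi_2 g)$ when $\pi_2 g$ is regular elliptic and then chains several applications of it; both arguments exploit the same underlying fact (Lemma~\ref{lem:Cf_fixing_subspace}) and yield identical conclusions. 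Your Jacobi--Trudi stability argument for $H\vert_{\Pi_{\le t}}$ replaces the paper's appeal to the analogous Kostka stability from~\cite[Thm.~20]{EllFriPil2011}, which is again the same fact seen from the inverse side.
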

\par
In the remainder of this section we essentially only prove Proposition~\ref{pro:matrix_full_rank}. The reader who is interested in maintaining the flow of the proof of our main results may wish to skip to the next section at first reading.
\par
We define $R\in\CC(\Lambda_n,\Lambda_n)$ by
\[
R(\ulambda,\usigma)=\chi^{\ulambda}_{\usigma}\quad\text{for each $\ulambda,\usigma\in\Lambda_n$}
\]
and let $R_t$ be the restriction of $R$ to $\CC(\Pi_{\le t},\Sigma_{\le t})$. We shall prove a counterpart of Proposition~\ref{pro:matrix_full_rank} for the matrix $R_t$.
\begin{proposition}
\label{pro:irr_matrix_full_rank}
For $n>2t$, the matrix $R_t$ has full rank and is independent of $n$. 
\end{proposition}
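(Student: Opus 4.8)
The plan is to prove Proposition~\ref{pro:irr_matrix_full_rank} by exploiting the parabolic induction structure of the characters $\chi^{\ulambda}$ together with the combinatorial description of the conjugacy classes $C_{\usigma}$ for $\usigma\in\Sigma_{\le t}$. The key observation is that the index sets $\Pi_{\le t}$ and $\Sigma_{\le t}$ are built, respectively, from partitions $\ulambda$ whose value at one of the linear polynomials $\alpha^i$ has a very long first row ($\ulambda(\alpha^i)_1 = n-k$ with $k\le t$) and from classes $\usigma$ containing one block equal to the companion matrix of a large irreducible polynomial $h_{\ell,j}$ of degree $n-\ell$ with $\ell\le t$. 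Both of these constraints force the "bulk" of the combinatorial data to live in a bounded region (size $\le 2t$) while a single large piece of degree roughly $n$ is pinned down in an essentially unique way. This is precisely the $q$-analog of the situation in the symmetric group, where the character table restricted to "near-$n$-cycle" classes and "near-trivial" partitions is governed by the branching rule.

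The main steps I would carry out are as follows. First, for $\usigma\in\Sigma_{\ell,j}$ with $h_{\ell,j}$ of degree $n-\ell$, use the formula for parabolic induction~\eqref{eqn:def_parabolic_induction} together with~\eqref{eqn:irr_char_induction} to express $\chi^{\ulambda}_{\usigma}$. Since $C(h_{\ell,j})$ is a single companion block of an irreducible polynomial not equal to $X-\alpha^i$ for any relevant $i$, the induction formula will localize: only the factor $\chi^{f\mapsto\ulambda(f)}$ at polynomials $f$ of small degree and the factor at $h_{\ell,j}$ itself will contribute, and by Lemma~\ref{lem:Cf_fixing_subspace} (or the analogous Green-formula vanishing) the character value $\chi^{X-\alpha^i\mapsto\nu}$ evaluated on a class containing a large irreducible block is governed by a Murnaghan--Nakayama-type rule removing a single long "border strip" of size $n-\ell$. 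Second, I would argue that this rule, applied to a partition with first row $n-k$ (where $k\le t < (n-\ell)/2$ roughly), removes the entire first row (or a dominating hook), reducing $\chi^{\ulambda}_{\usigma}$ to a character value on $G_{k'}$ for some $k'\le 2t$ bounded independently of $n$ — and crucially, the resulting expression depends only on the "small" parts of $\ulambda$ and $\usigma$, not on $n$. This yields both that $R_t$ is independent of $n$ (after the natural identification of the index sets for different $n>2t$) and gives an explicit "triangular-with-unit-diagonal" structure with respect to a suitable ordering (for instance by $k$, then by dominance on the residual small partition), from which full rank follows.

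The hardest part will be making the Murnaghan--Nakayama/Pieri reduction rigorous in the $G_n$ setting and checking that the reduced matrix really is (block-)triangular with invertible diagonal blocks. Concretely, one must control $\chi^{X-\alpha^i\mapsto\nu}$ evaluated on a class whose Jordan form has both a long irreducible block $C(h_{\ell,j})$ and some small unipotent-or-mixed part; the cleanest route is to use~\eqref{eqn:uchi_from_uxi} to pass from $\chi$ to the induced characters $\xi^{\umu}$, for which~\eqref{eqn:xi_induction_from_parts} and transitivity of induction give a clean combinatorial formula, then invert the unitriangular Kostka matrices~\eqref{eqn:Kostka_inv_triangular} at the end. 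A secondary subtlety is the bookkeeping needed to identify $\Pi_{\le t}$ (indexed via $\Pi_{k,i}$, i.e. via the long first row at $\alpha^i$) with $\Sigma_{\le t}$ (indexed via the long block $h_{\ell,j}$): one needs the matching $k\leftrightarrow\ell$ and $i\leftrightarrow j$ to be the "correct" pairing so that the diagonal blocks are nonzero, and this is where the constraint $n>2t$ is used to guarantee that no two of the relevant long objects can coincide or overlap ambiguously. Once Proposition~\ref{pro:irr_matrix_full_rank} is established, Proposition~\ref{pro:matrix_full_rank} will follow by symmetrizing: $Q_t$ is obtained from $R_t$ by the change of basis taking $\chi^{\ulambda}$ to $\psi^{\ulambda}$ and $C_{\usigma}$ to $D_{\usigma}$, which by Lemma~\ref{lem:conjugates} and the observation that $\abs{\Omega_n\cap\Pi_{k,i}}=\abs{\Omega_n\cap\Sigma_{k,i}}$ for $k<n/2$ is an invertible transformation on the relevant subspaces, so full rank is preserved.
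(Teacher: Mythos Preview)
Your proposal is correct and follows essentially the same route as the paper: pass from $\chi^{\ulambda}$ to the parabolically induced characters $\xi^{\umu}$ via the unitriangular Kostka transformation (Lemma~\ref{lem:T_full_rank}), use the regular-elliptic block in $\usigma\in\Sigma_{\ell,j}$ together with Lemma~\ref{lem:phi_psi_ind} to obtain the reduction $\xi^{\umu}_{\usigma}=\xi^{\unu}_{\utau}\,\omega^{ij}$ of Lemma~\ref{lem:xi_reduction}, and conclude that the $\xi$-matrix $S_t$ is block upper-triangular in $(k,\ell)$ with diagonal blocks equal to Kronecker products of the full character-table matrix $S^{(k)}$ of $G_k$ with the Vandermonde matrix $(\omega^{ij})$. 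The only minor correction is that the diagonal blocks are not literally unit-triangular but rather invertible because $S^{(k)}$ (equivalently the character table of $G_k$) is invertible; otherwise your outline matches the paper's argument in all essential points.
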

\par
Note that $Q_t$ is obtained from $R_t$ by first applying elementary row operations, then deleting some rows, and then (in view of~\eqref{eqn:psi_constant_on_D}) deleting duplicate columns. Hence Proposition~\ref{pro:matrix_full_rank} follows from Proposition~\ref{pro:irr_matrix_full_rank}.
\par
We now prove Proposition~\ref{pro:irr_matrix_full_rank}. We let $S\in\CC(\Lambda_n,\Lambda_n)$ be the matrix defined by
\begin{equation}
S(\umu,\usigma)=\xi^{\umu}_{\usigma}\quad\text{for each $\umu,\usigma\in\Lambda_n$}   \label{eqn:def_S}
\end{equation}
and let $S_t$ be the restriction of $S$ to $\CC(\Pi_{\le t},\Sigma_{\le t})$. Now recall the equivalence relation~$\sim$ on $\Lambda_n$ and the numbers $K_{\ulambda\umu}$ from Section~\ref{sec:character_theory}. Define $T\in\CC(\Lambda_n,\Lambda_n)$ by
\[
T(\umu,\ulambda)=\begin{cases}
K_{\ulambda\umu} & \text{for $\ulambda\sim\umu$}\\
0 & \text{otherwise}
\end{cases}
\]
and let~$T_t$ be the restriction of $T$ to $\CC(\Pi_{\le t},\Pi_{\le t})$. We first prove the following.
\begin{lemma}~
\label{lem:T_full_rank}
\begin{enumerate}[(i)]
\item We have $S=TR$ and $T$ has full rank.
\item For $n>2t$, we have $S_t=T_tR_t$ and $T_t$ has full rank and is independent of $n$. 
\end{enumerate}
\end{lemma}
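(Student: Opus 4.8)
The plan is to deduce everything from the relation \eqref{eqn:uxi_from_uchi}, which expresses $\xi^{\umu}$ as a $\sim$-triangular combination of the $\chi^{\ulambda}$ with coefficients $K_{\ulambda\umu}$. Evaluating \eqref{eqn:uxi_from_uchi} on the conjugacy class $C_{\usigma}$ for each $\usigma\in\Lambda_n$ gives exactly $\xi^{\umu}_{\usigma}=\sum_{\ulambda\sim\umu}K_{\ulambda\umu}\,\chi^{\ulambda}_{\usigma}$, which is the matrix identity $S=TR$ with $T,R,S$ as defined. For part (i) it then remains to show $T$ has full rank. Here I would argue that $T$ is block-diagonal according to the $\sim$-equivalence classes (shapes): $T(\umu,\ulambda)=0$ unless $\umu\sim\ulambda$, so $T$ decomposes as a direct sum of blocks indexed by shapes $s\colon\Phi\to\ZZ_{\ge 0}$, each block having rows and columns indexed by the $\ulambda\in\Lambda_n$ of shape $s$. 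Within one such block the entry is $K_{\ulambda\umu}=\prod_{f}K_{\ulambda(f)\umu(f)}$, so the block is the tensor product over $f$ in the support of $s$ of the ordinary Kostka matrices $(K_{\ulambda(f)\umu(f)})$. By \eqref{eqn:Kostka_triangular} each Kostka matrix is unitriangular with respect to the dominance order (hence invertible), so each tensor-product block is invertible, and therefore $T$ has full rank.

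For part (ii) I would restrict the same identity to the relevant index sets. The key point is that $\Pi_{\le t}$ and $\Sigma_{\le t}$ are the "small" index sets, and I need to check that the triangular structure of $T$ is compatible with restriction to $\Pi_{\le t}$, i.e.\ that $S=TR$ restricts to $S_t=T_tR_t$. This requires the observation that if $\umu\in\Pi_{\le t}$ and $K_{\ulambda\umu}\ne 0$ then $\ulambda\in\Pi_{\le t}$: indeed $\umu\in\Pi_{k,i}$ means $\umu(\alpha^i)_1=n-k$ with $k\le t$, and $K_{\ulambda\umu}\ne 0$ forces $\ulambda\sim\umu$ (same shape, so $\abs{\ulambda(\alpha^i)}=\abs{\umu(\alpha^i)}$) and $\ulambda(\alpha^i)\unrhd\umu(\alpha^i)$ by \eqref{eqn:Kostka_triangular}, whence $\ulambda(\alpha^i)_1\ge\umu(\alpha^i)_1=n-k$, i.e.\ $\ulambda(\alpha^i)_1=n-k'$ for some $k'\le k\le t$, so $\ulambda\in\Pi_{\le t}$. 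Consequently the product $TR$ restricted to rows $\Pi_{\le t}$ and columns $\Sigma_{\le t}$ only involves the rows and columns of $T$ indexed by $\Pi_{\le t}$, giving $S_t=T_tR_t$. The matrix $T_t$ is the restriction of $T$ to $\Pi_{\le t}\times\Pi_{\le t}$; since $T$ is block-diagonal by shape and the "small row implies small column" property just proved shows $\Pi_{\le t}$ is a union of $T$-invariant pieces within each shape block, $T_t$ is again a direct sum of (sub)blocks that are products/restrictions of unitriangular Kostka matrices, hence still unitriangular with respect to dominance order and therefore of full rank.

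Finally, for the independence of $T_t$ from $n$, I would set up an explicit bijection between the index set $\Pi_{\le t}$ (for varying large $n$) and an $n$-free index set, under which the entries $K_{\ulambda\umu}$ are preserved. Concretely, an element $\ulambda\in\Pi_{k,i}$ has $\ulambda(\alpha^i)_1=n-k$, a "large" first part, while $\norm{\ulambda}=n$ forces everything else to be small (total size $k$ distributed among the other parts of $\ulambda(\alpha^i)$ and the partitions $\ulambda(f)$ for $f\ne\alpha^i$, using $n>2t\ge 2k$ so that $n-k$ really is the unique largest part and cannot be "confused" with the rest); stripping off that first part of size $n-k$ gives a datum not involving $n$, and the Kostka numbers $K_{\ulambda(\alpha^i),\umu(\alpha^i)}$ are unchanged because adding a common long first row to two partitions does not change the number of semistandard tableaux of the given shape and content (the long first row is forced to be filled with $1$'s). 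The condition $n>2t$ is exactly what makes this stripping well-defined and the bijection respect the $\sim$-classes and the dominance order. I expect this last bookkeeping — verifying that the "strip the long row" map is a well-defined bijection on $\Pi_{\le t}$ compatible with all the relevant structure when $n>2t$ — to be the main obstacle, though it is routine once the right statement is isolated; the structural claims $S=TR$ and the unitriangularity of the Kostka blocks are immediate from \eqref{eqn:uxi_from_uchi} and \eqref{eqn:Kostka_triangular}.
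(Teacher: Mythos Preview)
Your proposal is correct and follows essentially the same route as the paper: block-diagonalise $T$ by shape, write each block as a Kronecker product of Kostka matrices, and invoke the unitriangularity~\eqref{eqn:Kostka_triangular}; for part~(ii) the paper likewise restricts to $\Pi_{\le t}$ and identifies each restricted block as a Kronecker product $\tilde K^{(s(\alpha^i))}\otimes\bigotimes_{f\ne\alpha^i}K^{(s(f))}$ with $\tilde K$ the Kostka matrix restricted to partitions $\lambda\unrhd(n-t,1^{s(\alpha^i)-(n-t)})$, exactly matching your ``upper set under dominance'' description.

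One small caution on your independence-of-$n$ sketch: the first parts $\ulambda(\alpha^i)_1$ and $\umu(\alpha^i)_1$ need not be equal, so you are not literally stripping a \emph{common} long row, and the first row of a tableau of shape $\ulambda(\alpha^i)$ and content $\umu(\alpha^i)$ is not entirely filled with $1$'s when $\ulambda(\alpha^i)_1>\umu(\alpha^i)_1$. The correct statement is that once both first parts exceed the size of the remaining tails (guaranteed by $n>2t$), the Kostka number $K_{\lambda\mu}$ depends only on the tails $(\lambda_2,\lambda_3,\dots)$ and $(\mu_2,\mu_3,\dots)$, since $\lambda_1-\mu_1$ is then determined by the tail sizes; the paper does not spell this out either but simply cites~\cite[Thm.~20]{EllFriPil2011} for it.
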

\begin{proof}
From~\eqref{eqn:uxi_from_uchi} we have $S=TR$ and $T$ is block diagonal, where the blocks are induced by the equivalence classes under $\sim$. Each diagonal block corresponds to one equivalence class. If $s:\Phi\to\ZZ$ is the shape of such an equivalence class, then the corresponding block can be written as a Kronecker product
\[
\bigotimes_{f\in\Phi}K^{(s(f))},
\]
where $K^{(m)}\in\CC(\Par_m,\Par_m)$ is a Kostka matrix given by $K^{(m)}(\mu,\lambda)=K_{\lambda\mu}$ with the convention $K^{(0)}=(1)$ and $\Par_m$ is the set of partitions of $m$. By~\eqref{eqn:Kostka_triangular} the Kostka matrices are invertible. Hence $T$ is a block-diagonal matrix whose blocks are Kronecker products of matrices of full rank and so $T$ itself has full rank. This proves~(i).
\par
From~\eqref{eqn:Kostka_triangular} we find that $S_t=T_tR_t$.  Note that $T_t$ is still block diagonal with one diagonal block for each equivalence class of $\Lambda_n$ under $\sim$ whose shape $s:\Phi\to\ZZ$ satisfies $s(\alpha^i)\ge n-t$ for some $i$. The corresponding block can be written as
\begin{equation}
\tilde K^{(s(\alpha^i))}\otimes\bigotimes_{f\in\Phi\setminus\{\alpha^i\}}K^{(s(f))},   \label{eqn:Kronecker_T_t}
\end{equation}
where $\tilde K^{(s(\alpha^i))}$ is the matrix $K^{(s(\alpha^i))}$ restricted to partitions $\lambda$ of $s(\alpha^i)$ satisfying
\[
\lambda\unrhd (n-t,1^{s(\alpha^i)-(n-t)}).
\]
From~\eqref{eqn:Kostka_triangular} we find that, after a suitable ordering of rows and columns, all matrices occuring in the Kronecker product~\eqref{eqn:Kronecker_T_t} are upper-triangular with ones on the diagonal. Again~$T_t$ is a block-diagonal matrix whose blocks are Kronecker products of matrices of full rank and so $T_t$ itself has full rank.
\par
From the proof of~\cite[Thm.~20]{EllFriPil2011} we know that $\tilde K^{(s(\alpha^i))}$ is independent of $n$. Moreover all other matrices occuring in the Kronecker product~\eqref{eqn:Kronecker_T_t} are also independent of~$n$. Hence $T_t$ itself is also independent of $n$. This proves (ii).
\end{proof}
\par
Next we shall show that the matrix $S_t$ has full rank. Recall that, for a composition~$\lambda$, we denote by $P_{\lambda}$ the parabolic subgroup of $G_{\abs{\lambda}}$ given in~\eqref{eqn:def_parabolic_subgroup}. 
We start with the following lemma.
\begin{lemma}
\label{lem:phi_psi_ind}
Let $m$ and $n$ be positive integers satisfying $m<n$ and let $\phi$ and $\psi$ be class functions of $G_m$ and $G_n$, respectively. Let $\pi_1:P_{(m,n)}\to G_m$ and $\pi_2:P_{(m,n)}\to G_n$ be the natural projections onto the corresponding diagonal blocks. Let $g\in P_{(m,n)}$ be such that $\pi_2(g)$ is regular elliptic. Then we have
\[
(\phi\odot \psi)(g)=\phi(\pi_1(g))\,\psi(\pi_2(g)).
\]
\end{lemma}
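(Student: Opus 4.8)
The plan is to compute $(\phi\odot\psi)(g)$ directly from the induction formula~\eqref{eqn:def_parabolic_induction}, using the fact that $\pi_2(g)$ regular elliptic forces the conjugating elements that contribute to the sum to lie in a very restricted set. Write $P=P_{(m,n)}$. By definition $\phi\odot\psi=\Ind_P^{G_{m+n}}\bigl((\phi\circ\pi_1)(\psi\circ\pi_2)\bigr)$, so by~\eqref{eqn:induction_formula},
\[
(\phi\odot\psi)(g)=\frac{1}{\abs{P}}\sums{x\in G_{m+n}\\xgx^{-1}\in P}\phi\bigl(\pi_1(xgx^{-1})\bigr)\,\psi\bigl(\pi_2(xgx^{-1})\bigr).
\]
The first step is to show that the conjugates $xgx^{-1}$ that occur in this sum are all $P$-conjugate to $g$ itself. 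Think of $P$ as the stabiliser of the coordinate subspace $W=\langle e_1,\dots,e_m\rangle$ (the standard $m$-space). If $xgx^{-1}\in P$, then $xgx^{-1}$ stabilises $W$, i.e.\ $g$ stabilises $x^{-1}W$, so $x^{-1}W$ is a $g$-invariant $m$-dimensional subspace of $\FF_q^{m+n}$. I claim $x^{-1}W=W$. Indeed, $g$ acts on the quotient $\FF_q^{m+n}/W$ as $\pi_2(g)$, which is regular elliptic on an $n$-dimensional space, hence by Lemma~\ref{lem:Cf_fixing_subspace} has no invariant proper nontrivial subspace. If $U$ is any $g$-invariant $m$-space, its image in $\FF_q^{m+n}/W$ is a $g$-invariant subspace, hence is either $0$ or everything; since $\dim U=m<m+n$ it cannot be everything, so $U\subseteq W$, and then $U=W$ by dimension count. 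Applying this with $U=x^{-1}W$ gives $x^{-1}W=W$, i.e.\ $x\in P$.

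Therefore the sum runs over $x\in P$ only, and for such $x$ we have $xgx^{-1}\in P$ automatically and $\pi_i(xgx^{-1})=\pi_i(x)\pi_i(g)\pi_i(x)^{-1}$, since $\pi_1,\pi_2$ are group homomorphisms on $P$. Hence
\[
(\phi\odot\psi)(g)=\frac{1}{\abs{P}}\sum_{x\in P}\phi\bigl(\pi_1(x)\pi_1(g)\pi_1(x)^{-1}\bigr)\,\psi\bigl(\pi_2(x)\pi_2(g)\pi_2(x)^{-1}\bigr)
=\frac{1}{\abs{P}}\sum_{x\in P}\phi(\pi_1(g))\,\psi(\pi_2(g)),
\]
using that $\phi$ and $\psi$ are class functions on $G_m$ and $G_n$ respectively. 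The summand is now constant and the sum has $\abs{P}$ terms, giving $(\phi\odot\psi)(g)=\phi(\pi_1(g))\,\psi(\pi_2(g))$, as desired.

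The only real content is the invariant-subspace argument, and the one subtlety to handle carefully there is that Lemma~\ref{lem:Cf_fixing_subspace} is a statement about the action of a regular elliptic element on its \emph{own} ambient space; here I apply it to the induced action of $g$ on $\FF_q^{m+n}/W\cong\FF_q^n$, on which $g$ acts as the regular elliptic element $\pi_2(g)$, and I must make sure the passage "$U$ $g$-invariant in $\FF_q^{m+n}$ $\Rightarrow$ image of $U$ is $\pi_2(g)$-invariant in the quotient" is spelled out, together with the dimension bookkeeping that pins down $U=W$. Everything else is a routine unwinding of~\eqref{eqn:def_parabolic_induction} and~\eqref{eqn:induction_formula} together with the class-function property. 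I expect no further obstacle.
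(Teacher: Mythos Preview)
Your proof is correct and follows essentially the same route as the paper's: both show that $\{x\in G_{m+n}:xgx^{-1}\in P\}=P$ by using Lemma~\ref{lem:Cf_fixing_subspace} (applied to the action on the quotient $\FF_q^{m+n}/W$) to see that $g$ has a unique invariant $m$-space, and then conclude via the class-function property; the paper reaches the set equality by a basis-counting argument, while you argue directly from the stabiliser description of~$P$, which is slightly cleaner. One cosmetic slip: the inequality you need for ``cannot be everything'' is that the image of $U$ in the quotient has dimension at most $m<n=\dim(\FF_q^{m+n}/W)$, not $m<m+n$.
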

\begin{proof}
From~\eqref{eqn:def_parabolic_induction} we have
\begin{equation}
(\phi\odot \psi)(g)=\frac{1}{\abs{P_{(m,n)}}}\sums{x\in G_{m+n}\\xgx^{-1}\in P_{(m,n)}}\phi(\pi_1(xgx^{-1}))\psi(\pi_2(xgx^{-1})).   \label{eqn:induction_phi_psi}
\end{equation}
Since $\pi_2(g)$ is regular elliptic and $m<n$, we find from Lemma~\ref{lem:Cf_fixing_subspace} that $g$ stabilises a unique $m$-dimensional subspace $U$ of $\FF_q^{m+n}$. Hence the number of $x\in G_{m+n}$ such that $xgx^{-1}\in P_{(m,n)}$ is the number of ordered bases $\{u_1,\dots,u_m,w_1,\dots,w_n\}$ of $\FF_q^{m+n}$ such that $\{u_1,\dots,u_m\}$ spans $U$. This number equals $\abs{P_{(m,n)}}$. Since $xgx^{-1}\in P_{(m,n)}$ for each $x\in P_{(m,n)}$, we conclude that 
\[
\{x\in G_{m+n}:xgx^{-1}\in P_{(m,n)}\}=P_{(m,n)}.
\]
Since $\pi_i(xgx^{-1})$ is conjugate to $\pi_i(g)$ for each $i\in\{1,2\}$ and each $x\in P_{(m,n)}$, the statement of the lemma follows from~\eqref{eqn:induction_phi_psi}.
\end{proof}
We use Lemma~\ref{lem:phi_psi_ind} to prove the following lemma on the structure of the matrix $S$.
\begin{lemma}
\label{lem:xi_reduction}
Let $k,\ell$ be integers satisfying $0\le k,\ell<n/2$ and let $\umu\in\Pi_{k,i}$ and $\usigma\in\Sigma_{\ell,j}$. If $k>\ell$, then we have $\xi^{\umu}_{\usigma}=0$. For $k\le \ell$, let $\nu$ be the partition obtained from $\umu(X-\alpha^i)$ by replacing the part $n-k$ by $\ell-k$ and define $\unu,\utau\in\Lambda_\ell$ by 
\[
\unu(f)=\begin{cases}
\nu & \text{for $f=X-\alpha^i$}\\
\umu(f) & \text{otherwise}
\end{cases}
\quad\text{and}\quad
\utau(f)=\begin{cases}
\varnothing & \text{for $f=h_{\ell,j}$}\\
\usigma(f) & \text{otherwise}.
\end{cases}
\]
If $k\le\ell$, then we have $\xi^{\umu}_{\usigma}=\xi^{\unu}_{\utau}\,\omega^{ij}$.
\end{lemma}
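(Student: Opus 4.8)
The plan is to compute $\xi^{\umu}$ by evaluating it on the representative $g:=R_{\utau}\oplus C(h_{\ell,j})$ of $C_{\usigma}$: since $\usigma(h_{\ell,j})=(1)$, the Jordan form $R_{\usigma}$ is conjugate to this block-diagonal matrix, in which $C(h_{\ell,j})$ is regular elliptic of degree $n-\ell$. Write $\FF_q^n=V_1\oplus V_2$ accordingly, with $\dim V_1=\ell$, $\dim V_2=n-\ell$, $g|_{V_1}=R_{\utau}$ and $g|_{V_2}=C(h_{\ell,j})$. As a preliminary step I would record the orbit formula
\[
(\phi\odot\psi)(g)=\sum_{\substack{W\leq\FF_q^n,\ gW=W\\ \dim W=a}}\phi(g|_W)\,\psi(g|_{\FF_q^n/W}),
\]
valid for class functions $\phi$ of $G_a$ and $\psi$ of $G_{n-a}$. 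It follows from~\eqref{eqn:induction_formula} and~\eqref{eqn:def_parabolic_induction} by grouping the sum over $x$ according to the $g$-invariant subspace $x^{-1}\langle e_1,\dots,e_a\rangle$, exactly as in the proof of Lemma~\ref{lem:phi_psi_ind}: each $g$-invariant $a$-subspace arises from precisely $\abs{P_{(a,n-a)}}$ elements $x$.

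Using associativity and commutativity of $\odot$ together with~\eqref{eqn:xi_induction_from_parts} and~\eqref{eqn:xi_induction}, I would split off the largest part of $\umu(X-\alpha^i)$: if $\umu'\in\Lambda_k$ is obtained from $\umu$ by deleting the part $n-k$ of $\umu(X-\alpha^i)$, then $\xi^{\umu}=\xi^{X-\alpha^i\mapsto(n-k)}\odot\xi^{\umu'}$, and straight from the definition of $\unu$ one has likewise $\xi^{\unu}=\xi^{X-\alpha^i\mapsto(\ell-k)}\odot\xi^{\umu'}$. Also $\xi^{X-\alpha^i\mapsto(m)}=\chi^{X-\alpha^i\mapsto(m)}$ by~\eqref{eqn:xi_chi} and~\eqref{eqn:Kostka_triangular}, and this is the linear character $g'\mapsto\theta(\det g')^i$ by Lemma~\ref{lem:det_char}.

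Applying the orbit formula to $\xi^{\umu}(g)=(\xi^{X-\alpha^i\mapsto(n-k)}\odot\xi^{\umu'})(g)$, the point is that $C(h_{\ell,j})$ has irreducible characteristic polynomial of degree $n-\ell>\ell$, hence coprime to that of $R_{\utau}$; thus $V_1,V_2$ are the primary components of $g$, every $g$-invariant $W$ satisfies $W=(W\cap V_1)\oplus(W\cap V_2)$, and $W\cap V_2\in\{0,V_2\}$ by Lemma~\ref{lem:Cf_fixing_subspace}. A dimension count using $k,\ell<n/2$ then yields: if $k>\ell$ there is no $g$-invariant $(n-k)$-subspace, so the sum is empty and $\xi^{\umu}_{\usigma}=0$; if $k\leq\ell$ the $g$-invariant $(n-k)$-subspaces are precisely $W=W'\oplus V_2$ with $W'$ an $R_{\utau}$-invariant $(\ell-k)$-subspace of $V_1$. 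For such $W$, $g|_W=R_{\utau}|_{W'}\oplus C(h_{\ell,j})$ and $g|_{\FF_q^n/W}\cong R_{\utau}|_{V_1/W'}$, while $\det C(h_{\ell,j})=\alpha^j$ gives $\chi^{X-\alpha^i\mapsto(n-k)}(g|_W)=\theta(\det R_{\utau}|_{W'})^i\,\omega^{ij}$. Summing and extracting $\omega^{ij}$, what remains is $\sum_{W'}\theta(\det R_{\utau}|_{W'})^i\,\xi^{\umu'}(R_{\utau}|_{V_1/W'})$; by the orbit formula applied inside $\GL(V_1)\cong G_\ell$ this equals $(\xi^{X-\alpha^i\mapsto(\ell-k)}\odot\xi^{\umu'})(R_{\utau})=\xi^{\unu}(R_{\utau})=\xi^{\unu}_{\utau}$, so $\xi^{\umu}_{\usigma}=\xi^{\unu}_{\utau}\,\omega^{ij}$.

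The manipulations with $\odot$ are formal; the step needing real care is the description of the $g$-invariant subspaces — that $V_1,V_2$ are primary components (using $n-\ell>\ell$), that invariant subspaces split compatibly with $\FF_q^n=V_1\oplus V_2$, and the precise dimension inequalities (again using $k,\ell<n/2$) that separate the vanishing case $k>\ell$ from the reduction case $k\leq\ell$ and identify the $(\ell-k)$-subspaces of $V_1$ that occur. I expect most of the genuine work to lie there, though none of it is deep.
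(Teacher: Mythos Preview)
Your proposal is correct and follows essentially the same approach as the paper: both split off the part $n-k$ of $\umu(X-\alpha^i)$, use that $C(h_{\ell,j})$ is regular elliptic (Lemma~\ref{lem:Cf_fixing_subspace}) to control the $g$-invariant subspaces entering the parabolic induction, and finish with the determinant description of $\chi^{X-\alpha^i\mapsto(m)}$. The only organizational difference is that you package the induction computation as an explicit ``orbit formula'' over $g$-invariant subspaces and apply it once in $G_n$ and once in $G_\ell$, whereas the paper instead first establishes the intermediate identity $\xi^{X-\alpha^i\mapsto(n-k)}=(\xi^{X-\alpha^i\mapsto(\ell-k)}\odot\xi^{X-\alpha^i\mapsto(n-\ell)})$ on the relevant set and then invokes Lemma~\ref{lem:phi_psi_ind}; your route is slightly more direct but not genuinely different.
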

\begin{proof}
Let $g\in C_{\usigma}$. Define $\ukappa\in\Lambda_k$ by
\[
\ukappa(f)=\begin{cases}
(\umu(\alpha^i)_2,\umu(\alpha^i)_3,\dots) & \text{for $f=X-\alpha^i$}\\
\umu(f) & \text{otherwise},
\end{cases}
\]
so that by~\eqref{eqn:xi_induction_from_parts} and~\eqref{eqn:xi_induction}
\begin{equation}
\xi^{\umu}=\xi^{\ukappa}\odot\xi^{X-\alpha^i\mapsto (n-k)}.   \label{eqn:xi_mu}\\
\end{equation}
For $\xi^{\umu}(g)$ to be nonzero, $g$ must be conjugate to an element of $P_{(k,n-k)}$. Each such element fixes a $k$-dimensional subspace of $\FF_q^n$. If $k>\ell$, then by Lemma~\ref{lem:Cf_fixing_subspace},~$g$ fixes no $k$-dimensional subspace of $\FF_q^n$ and hence $\xi^{\umu}(g)=0$.
\par
Henceforth assume that $k\le\ell$. We shall frequently use $\xi^{f\mapsto (m)}=\chi^{f\mapsto (m)}$, which follows from~\eqref{eqn:xi_chi} and~\eqref{eqn:Kostka_triangular}. Since $k\le\ell$ we have
\begin{equation}
\xi^{\unu}=\xi^{\ukappa}\odot\xi^{X-\alpha^i\mapsto (\ell-k)}.   \label{eqn:xi_nu}
\end{equation}
Write 
\[
E=\bigcups{\urho\in\Lambda_{n-k}\\\urho(h_{\ell,j})=(1)} C_{\urho}.
\]
We claim that
\begin{equation}
\xi^{X-\alpha^i\mapsto (n-k)}(e)=(\xi^{X-\alpha^i\mapsto (\ell-k)}\odot \xi^{X-\alpha^i\mapsto (n-\ell)})(e)\quad\text{for each $e\in E$}.   \label{eqn:xi_nk_ind}
\end{equation}
Indeed, each $e\in E$ is conjugate to an element of $P_{(\ell-k,n-\ell)}$ with blocks $e_1\in G_{\ell-k}$ and $e_2\in G_{n-\ell}$ on the main diagonal, where $e_2$ is regular elliptic. Hence we find from Lemma~\ref{lem:det_char} that, for each $e\in E$, the left hand side of~\eqref{eqn:xi_nk_ind} equals
\begin{align*}
\theta(\det(e)^i)&=\theta(\det(e_1)^i)\cdot \theta(\det(e_2)^i)\\
&=\xi^{X-\alpha^i\mapsto (\ell-k)}(e_1)\cdot \xi^{X-\alpha^i\mapsto (n-\ell)}(e_2),
\end{align*}
which by Lemma~\ref{lem:phi_psi_ind} equals the right hand side of~\eqref{eqn:xi_nk_ind}. From~\eqref{eqn:xi_mu} we have
\[
\xi^{\umu}(g)=\frac{1}{\abs{P_{(k,n-k)}}}\sums{x\in G_n\\xgx^{-1}\in P_{(k,n-k)}}\xi^{\ukappa}(\pi_1(xgx^{-1}))\xi^{X-\alpha^i\mapsto (n-k)}(\pi_2(xgx^{-1})),
\]
where $\pi_1:P_{(k,n-k)}\to G_{k}$ and $\pi_2:P_{(k,n-k)}\to G_{n-k}$ are the natural projections onto the diagonal blocks. Since $k,\ell<n/2$, Lemma~\ref{lem:Cf_fixing_subspace} implies that each $\pi_2(xgx^{-1})$ occuring in the summation is forced to lie inside $E$. Hence by subsequent applications of~\eqref{eqn:xi_mu},~\eqref{eqn:xi_nk_ind}, and~\eqref{eqn:xi_nu} we then find that
\begin{align*}
\xi^{\umu}(g)&=(\xi^{\ukappa}\odot \xi^{X-\alpha^i\mapsto (n-k)})(g)\\
&=(\xi^{\ukappa}\odot\xi^{X-\alpha^i\mapsto (\ell-k)}\odot \xi^{X-\alpha^i\mapsto (n-\ell)})(g)\\
&=(\xi^{\unu}\odot \xi^{X-\alpha^i\mapsto (n-\ell)})(g).
\end{align*}
Without loss of generality, we may assume that $g\in P_{(\ell,n-\ell)}$ and that the diagonal blocks of $g$ are $g_1$ and $g_2$, where $g_1\in C_{\utau}$ and $g_2$ is the companion matrix of $h_{\ell,j}$. Since~$g_2$ is regular elliptic, we may apply Lemma~\ref{lem:phi_psi_ind} once more to obtain
\[
\xi^{\umu}(g)=\xi^{\unu}(g_1)\,\xi^{X-\alpha^i\mapsto (n-\ell)}(g_2).
\]
Since $g_1\in C_{\utau}$, we have $\xi^{\unu}(g_1)=\xi^{\unu}_{\utau}$, and since $g_2$ is the companion matrix of $h_{\ell,j}$, we find from Lemma~\ref{lem:det_char} that
\[
\xi^{X-\alpha^i\mapsto (n-\ell)}(g_2)=\theta(\det(g_2)^i)=\omega^{ij}.
\]
Hence we obtain $\xi^{\umu}(g)=\xi^{\unu}_{\utau}\,\omega^{ij}$, as required.
\end{proof}
\par
We can now prove the required property of the matrix $S_t$.
\begin{lemma}
\label{lem:R_full_rank}
For $n>2t$, the matrix $S_t$ has full rank and is independent of $n$.
\end{lemma}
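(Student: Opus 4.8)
The plan is to read the block structure of $S_t$ off Lemma~\ref{lem:xi_reduction} and reduce the claim to the invertibility of smaller, $n$-free matrices. Since $n>2t$, every pair $k,\ell$ with $0\le k,\ell\le t$ satisfies $k,\ell<n/2$, so Lemma~\ref{lem:xi_reduction} controls all entries of $S_t$. For such $k$ the part $n-k$ is the unique largest part of $\umu(X-\alpha^i)$ whenever $\umu\in\Pi_{k,i}$, so deleting it (passing from $\umu$ to the $\ukappa\in\Lambda_k$ appearing in the proof of Lemma~\ref{lem:xi_reduction}) gives a bijection $\Pi_{k,i}\to\Lambda_k$; likewise, since $\abs{h_{\ell,j}}=n-\ell>\ell$, erasing $\usigma(h_{\ell,j})$ gives a bijection $\usigma\mapsto\utau$ from $\Sigma_{\ell,j}$ to $\Lambda_\ell$. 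The same size inequalities show that the $\Pi_{k,i}$ are pairwise disjoint over $0\le k\le t$, $0\le i\le q-2$, and similarly for the $\Sigma_{\ell,j}$, so $\abs{\Pi_{\le t}}=\abs{\Sigma_{\le t}}$ (making $S_t$ square) and I may re-index the rows of $S_t$ by triples $(k,i,\ukappa)$ and the columns by triples $(\ell,j,\utau)$.

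In these coordinates Lemma~\ref{lem:xi_reduction} states that the entry of $S_t$ at $\big((k,i,\ukappa),(\ell,j,\utau)\big)$ is $0$ when $k>\ell$, and equals $\omega^{ij}\,\xi^{\unu}_{\utau}$ when $k\le\ell$, where $\unu\in\Lambda_\ell$ is obtained from $\ukappa$ by inserting a part $\ell-k$ into $\ukappa(X-\alpha^i)$. Every quantity here — the integers $k,\ell,i,j$, the partitions $\ukappa,\utau$, and the character value $\xi^{\unu}_{\utau}$ of $G_\ell$ with $\ell\le t$ — is independent of $n$, so $S_t$ is independent of $n$. Ordering rows and columns by increasing value of $k$ (resp.\ $\ell$) exhibits $S_t$ as block upper-triangular with diagonal blocks $D_0,D_1,\dots,D_t$, whence $\det S_t=\prod_{k=0}^{t}\det D_k$ and it suffices to show each $D_k$ is invertible.

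When $k=\ell$ the inserted part is $0$, so $\unu=\ukappa$ and the entry of $D_k$ at $\big((i,\ukappa),(j,\utau)\big)$ is $\omega^{ij}\,\xi^{\ukappa}_{\utau}$. Hence $D_k$ is, up to a permutation of rows and columns, the Kronecker product $W\otimes S^{(k)}$, where $W=(\omega^{ij})_{0\le i,j\le q-2}$ and $S^{(k)}=(\xi^{\ukappa}_{\utau})_{\ukappa,\utau\in\Lambda_k}$ is the analogue for $G_k$ of the matrix $S$. The matrix $W$ is a Vandermonde matrix in the distinct $(q-1)$-th roots of unity $\omega^0,\omega^1,\dots,\omega^{q-2}$ and so is invertible, and $S^{(k)}$ is invertible because $\{\xi^{\ukappa}:\ukappa\in\Lambda_k\}$ is a basis of the space of class functions of $G_k$: by~\eqref{eqn:uxi_from_uchi} and the invertibility of the Kostka matrices (exactly as in the proof of Lemma~\ref{lem:T_full_rank}(i)), $S^{(k)}$ is the product of an invertible triangular matrix with the (invertible) character table of $G_k$. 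Therefore each $D_k$ is invertible, $\det S_t\ne0$, and $S_t$ has full rank.

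The only delicate part is the bookkeeping behind the re-indexing: one needs $n>2t$ precisely to guarantee $n-k>k$ and $\abs{h_{\ell,j}}=n-\ell>\ell$ for all $k,\ell\le t$, so that the large parts $n-k$ and the high-degree polynomials $h_{\ell,j}$ cannot interfere with the size-$\le t$ data, the maps $\umu\mapsto\ukappa$ and $\usigma\mapsto\utau$ are genuine bijections onto $\Lambda_k$ and $\Lambda_\ell$ with pairwise disjoint domains, and the $k=\ell$ specialisation of $\nu$ removes the big part outright (rather than merely shrinking it). Granting this, everything else is formal.
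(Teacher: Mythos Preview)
Your proof is correct and follows essentially the same approach as the paper: both arguments use Lemma~\ref{lem:xi_reduction} to exhibit $S_t$ as block upper-triangular with diagonal blocks given by the Kronecker product of the Vandermonde matrix $(\omega^{ij})$ with the matrix $S^{(k)}$ for $G_k$, and then invoke Lemma~\ref{lem:T_full_rank}(i) together with the invertibility of the character table to conclude. Your version spells out the re-indexing bijections and the disjointness of the $\Pi_{k,i}$ and $\Sigma_{\ell,j}$ more explicitly than the paper does, but the substance is identical.
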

\begin{proof}
To indicate dependence on $n$, write $S^{(n)}$ for the matrix $S$ given in~\eqref{eqn:def_S} and~$S^{(n)}_t$ for the corresponding restricted matrix $S_t$. Let $n>2t$. From Lemma~\ref{lem:xi_reduction} we find that all entries in $S^{(n)}_t$ are independent of~$n$, which proves the second statement of the lemma.
\par
To show that $S^{(n)}_t$ is invertible, we view $S^{(n)}_t$ as a block matrix, where the blocks are indexed by $\Pi_k$ and $\Sigma_\ell$ for $k,\ell\in\{0,1,\dots,t\}$. Let $B_{k,\ell}$ be the block corresponding to $\Pi_k$ and~$\Sigma_\ell$. Lemma~\ref{lem:xi_reduction} implies that $B_{k,\ell}$ is zero for $k>\ell$ and, for $0\le k\le t$, the block $B_{kk}$ is the Kronecker product of $S^{(k)}$ and the Vandermonde matrix $(\omega^{ij})_{0\le i,j\le q-2}$. Since the character table of irreducible characters of every finite group is invertible, Lemma~\ref{lem:T_full_rank} implies that $S^{(k)}$ is invertible and so $B_{kk}$ is invertible. Hence $S^{(n)}_t$ is block upper-triangular and all diagonal blocks are invertible. Therefore $S^{(n)}_t$ itself is invertible.
\end{proof}
\par
Finally, by combining Lemmas~\ref{lem:T_full_rank} and~\ref{lem:R_full_rank}, we obtain a proof of Proposition~\ref{pro:irr_matrix_full_rank}.


\section{Proof of Theorems~\ref{thm:main_intersection} and~\ref{thm:main_cross_intersection}}
\label{sec:proofs_points}

Now recall the definition~\eqref{eqn:ev_graph} of the eigenvalues $P(\ulambda,\usigma)$ and the definition~\eqref{eqn:def_min_ev} of the prescribed extremal eigenvalue $\eta$. As a first step in constructing the required weight function $w$ occuring in~\eqref{eqn:LC_eigenvalues}, we prove the following result.
\begin{proposition}
\label{pro:equation_system}
Let $n$ and $t$ be positive integers satisfying $n>2t$. Then there exists $w\in\RR(\Omega_n\cap\Sigma_{\le t})$ such that $w(\usigma)=0$ for $\usigma(1)=(1^t)$ and
\begin{equation}
\sum_{\usigma\in\Omega_n\cap\Sigma_{\le t}}w(\usigma)P(\ulambda,\usigma)
=
\begin{cases}
1     & \text{for $\ulambda\in\Omega_n\cap\Pi_{0,0}$}\\
\eta & \text{for $\ulambda\in\Omega_n\cap\Pi_{k,0}$ and $1\le k\le t$}\\
0     & \text{for $\ulambda\in\Omega_n\cap\Pi_{k,i}$ and $0\le k\le t$ and $1\le i\le q-2$}
\end{cases}   \label{eqn:eqn_system}
\end{equation}
and
\begin{equation}
\abs{w(\usigma)}\le \frac{\gamma_t}{\abs{D_{\usigma}}}\quad\text{for all $\usigma\in\Omega_n\cap\Sigma_{\le t}$}   \label{eqn:bound_w}
\end{equation}
for some constant $\gamma_t$ depending only on $t$.
\end{proposition}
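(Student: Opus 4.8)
The plan is to recast the system~\eqref{eqn:eqn_system} as a single matrix equation involving the matrix $Q_t$ of Proposition~\ref{pro:matrix_full_rank}, to solve it by inverting $Q_t$, and then to verify the size estimate~\eqref{eqn:bound_w} and the support condition $w(\usigma)=0$ for $\usigma(1)=(1^t)$ separately. Using~\eqref{eqn:ev_graph}, put $v(\usigma)=\abs{D_{\usigma}}\,w(\usigma)$ and multiply the equation of~\eqref{eqn:eqn_system} indexed by $\ulambda$ by $\psi^{\ulambda}(1)$; this turns~\eqref{eqn:eqn_system} into $Q_t v=c$, where $c\in\RR(\Omega_n\cap\Pi_{\le t})$ equals $\psi^{\ulambda}(1)$ for $\ulambda\in\Pi_{0,0}$, equals $\eta\,\psi^{\ulambda}(1)$ for $\ulambda\in\Pi_{k,0}$ with $1\le k\le t$, and is $0$ otherwise. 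Since $n>2t$, Proposition~\ref{pro:matrix_full_rank} shows that $Q_t$ is invertible and that $Q_t^{-1}$ does not depend on $n$, so $v:=Q_t^{-1}c$ is the unique solution, and we take $w(\usigma)=v(\usigma)/\abs{D_{\usigma}}$.

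Since $Q_t^{-1}$ is independent of $n$ and indexed by sets whose sizes depend only on $t$, to obtain~\eqref{eqn:bound_w} it suffices to bound the entries of $c$ by a constant depending only on $t$. If $c(\ulambda)\ne 0$ and $\ulambda$ is not the trivial character, then $\ulambda(X-1)_1=n-k\ge n-t$, so by Lemma~\ref{lem:zeta_decomposition} the irreducible character $\chi^{\ulambda}$ occurs in $\zeta^{(t,0)}$; hence $\psi^{\ulambda}(1)\le 2\chi^{\ulambda}(1)\le 2\,\zeta^{(t,0)}(1)=2\prod_{i=0}^{t-1}(q^n-q^i)$, and since $\abs{\eta}=\bigl(\prod_{i=0}^{t-1}(q^n-q^i)-1\bigr)^{-1}$ we get $\abs{c(\ulambda)}\le 4$; for the trivial character $c(\ulambda)=1$. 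This gives~\eqref{eqn:bound_w}.

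The substance of the proposition is the support condition, which is a priori over-determined: requiring $w(\usigma)=0$ on the $\usigma\in\Omega_n\cap\Sigma_{\le t}$ with $\usigma(1)=(1^t)$ imposes more conditions on $v=Q_t^{-1}c$ than are automatic. Write $\usigma^{(1)},\dots,\usigma^{(e)}$ for these $\usigma$, where $e$ is the number of orbits of $\{0,1,\dots,q-2\}$ under $j\mapsto -j$; each $\usigma^{(m)}$ lies in some $\Sigma_{t,j}$ with $\usigma^{(m)}(h_{t,j})=(1)$, so $R_{\usigma^{(m)}}=I_t\oplus C(h_{t,j})$ fixes exactly a $t$-dimensional subspace pointwise. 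For every other $\usigma\in\Omega_n\cap\Sigma_{\le t}$, the Jordan canonical form of any $g\in D_{\usigma}$ has a companion-matrix block $C(h)$ with $h$ irreducible of degree at least $n-t\ge t+1$, which by Lemma~\ref{lem:Cf_fixing_subspace} has no nonzero fixed vector; hence the fixed space of $g$ has dimension less than $t$ and $g$ fixes no $t$-space pointwise. Consequently $\zeta^{(t,0)}$ vanishes on every class in $\Omega_n\cap\Sigma_{\le t}$ except $D_{\usigma^{(1)}},\dots,D_{\usigma^{(e)}}$, where it equals $\prod_{i=0}^{t-1}(q^t-q^i)$, and by Lemma~\ref{lem:perm_character_zeta} the same vanishing holds for every $\zeta^{(t,s)}$. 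By Lemma~\ref{lem:zeta_decomposition}, every $\zeta^{(t,s)}$ is a combination of irreducible characters indexed by $\Pi_{\le t}$; pairing $s$ with $-s$ to make these invariant under $\usigma\mapsto\usigma^{*}$ produces $e$ class functions lying in $\ang{\psi^{\ulambda}:\ulambda\in\Omega_n\cap\Pi_{\le t}}$ whose expansions in terms of the $\psi^{\ulambda}$ are supported on pairwise disjoint index sets (the $\Pi_{k,i}$ being pairwise disjoint for $n>2t$), hence linearly independent. Because $Q_t$ has full rank, restriction to the classes of $\Omega_n\cap\Sigma_{\le t}$ is a linear bijection on $\ang{\psi^{\ulambda}:\ulambda\in\Omega_n\cap\Pi_{\le t}}$; thus both these $e$ symmetrised characters and the unique class function $F^{(m)}$ in that span which is $1$ on $D_{\usigma^{(m)}}$ and $0$ on the other classes of $\Omega_n\cap\Sigma_{\le t}$ lie in the same $e$-dimensional subspace, namely those vanishing on all classes outside $D_{\usigma^{(1)}},\dots,D_{\usigma^{(e)}}$; therefore $F^{(m)}$ is a linear combination of the symmetrised $\zeta^{(t,s)}$.

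Finally, let $r^{(m)}$ be the expansion of $F^{(m)}$ in terms of the $\psi^{\ulambda}$; by definition of $Q_t^{-1}$ together with the uniqueness of $F^{(m)}$, this $r^{(m)}$ is the row of $Q_t^{-1}$ indexed by $\usigma^{(m)}$, so $v(\usigma^{(m)})=\sum_{\ulambda}r^{(m)}(\ulambda)\,c(\ulambda)$, and by the previous paragraph it is enough to check that $\sum_{\ulambda}\bigl(\text{coefficient of }\psi^{\ulambda}\text{ in }\zeta^{(t,s)}\bigr)c(\ulambda)=0$ for each $s$. For $s\ne 0$ those coefficients are supported on $\ulambda$ with $c(\ulambda)=0$; for $s=0$ they are the multiplicities $m_{0,\ulambda}$, and since the trivial character occurs in $\zeta^{(t,0)}$ with multiplicity $1$ (the action on ordered linearly independent $t$-tuples being transitive) and $\sum_{\ulambda}m_{0,\ulambda}\psi^{\ulambda}(1)=\zeta^{(t,0)}(1)=\prod_{i=0}^{t-1}(q^n-q^i)$, the sum equals $1+\eta\bigl(\prod_{i=0}^{t-1}(q^n-q^i)-1\bigr)$, which is $0$ by the definition~\eqref{eqn:def_min_ev} of $\eta$. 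Hence $v(\usigma^{(m)})=0$, i.e.\ $w(\usigma^{(m)})=0$. The main obstacle is exactly this last computation: what forces the over-determined support condition to be consistent is the precise value of $\eta$ — the same quantity that controls the extremal size in Theorems~\ref{thm:main_intersection} and~\ref{thm:main_cross_intersection}.
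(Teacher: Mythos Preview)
Your proof is correct and follows essentially the same approach as the paper: both solve the square system $Q_t v=c$ via Proposition~\ref{pro:matrix_full_rank}, bound $\abs{v}$ using that $Q_t^{-1}$ is $n$-independent, and establish the support condition through the characters $\zeta^{(t,s)}+\zeta^{(t,-s)}$, which vanish on all classes of $\Omega_n\cap\Sigma_{\le t}$ except the special $\usigma^{(m)}$ and whose pairing with $c$ is killed by the choice of $\eta$. The only cosmetic difference is that where the paper explicitly inverts a Vandermonde-type cosine matrix to pass from $S_i=0$ to $w(\usigma_j)=0$, you instead use a dimension count in $\langle\psi^{\ulambda}:\ulambda\in\Omega_n\cap\Pi_{\le t}\rangle$ to express the relevant rows of $Q_t^{-1}$ as combinations of the symmetrised $\zeta^{(t,s)}$; one small imprecision is that ``coefficient of $\psi^{\ulambda}$ in $\zeta^{(t,s)}$'' should read ``in $\zeta^{(t,s)}+\zeta^{(t,-s)}$'' for $s\ne 0$, since $\zeta^{(t,s)}$ alone is not a combination of the $\psi^{\ulambda}$.
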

\begin{proof}
From Proposition~\ref{pro:matrix_full_rank} we know that $Q_t$ has full rank. In view of~\eqref{eqn:ev_graph} there exists a unique $w\in\RR(\Omega_n\cap\Sigma_{\le t})$ satisfying~\eqref{eqn:eqn_system}.
\par
We now show that $w(\usigma)=0$ for the $\lfloor q/2\rfloor+1$ elements $\usigma\in\Omega_n\cap\Sigma_{\le t}$ satisfying $\usigma(1)=(1^t)$. Without loss of generality we may assume that $\Omega_n$ contains $X-\alpha^i$ and $h_{t,j}$ for all $i,j=0,1,\dots,\lfloor q/2\rfloor$. Accordingly we define $\usigma_j\in\Sigma_{t,j}$ by $\usigma_j(1)=(1^t)$ for $j=0,1,\dots,\lfloor q/2\rfloor$. Recall the definition of the character $\zeta^{(t,i)}$ from Section~\ref{sec:character_theory} and write $\zeta^{(t,i)}_{\usigma}$ for this character evaluated on the conjugacy class~$C_{\usigma}$. We evaluate the sum
\begin{equation}
S_i=\sum_{\usigma\in\Omega_n\cap\Sigma_{\le t}}w(\usigma)\abs{D_{\usigma}}\,(\zeta^{(t,i)}_{\usigma}+\zeta^{(t,-i)}_{\usigma})   \label{eqn:sum_two_ways}
\end{equation}
in two ways. Since $\zeta^{(t,0)}$ is the permutation character on the set of $t$-tuples of linearly independent elements of $\FF_q^n$, we find by Lemma~\ref{lem:perm_character_zeta} that the summand in~\eqref{eqn:sum_two_ways} is nonzero only when the elements of $C_{\usigma}$ fix a $t$-tuple of linearly independent elements of $\FF_q^n$, hence only when $\usigma=\usigma_j$ for some $j$. By the definition of $\usigma_j$, each element in~$C_{\usigma_j}$ has determinant~$\alpha^j$. Hence by applying Lemma~\ref{lem:perm_character_zeta} twice we obtain
\[
\zeta^{(t,i)}_{\usigma_j}=\omega^{ij}\zeta^{(t,0)}_{\usigma_j}=\omega^{ij}\zeta^{(t,0)}_{\usigma_0}
\]
and therefore
\begin{equation}
S_i=2\zeta^{(t,0)}_{\usigma_0}\sum_{j=0}^{\lfloor q/2\rfloor}w(\usigma_j)\,\abs{D_{\usigma_j}}\,\cos\bigg(\frac{2\pi ij}{q-1}\bigg).   \label{eqn:S_i_eqn_system}
\end{equation}
On the other hand, since $\zeta^{(t,i)}+\zeta^{(t,-i)}$ is a real-valued class function, we find from Lemma~\ref{lem:conjugates} that it is a linear combination of $\psi^{\ulambda}$ for $\ulambda\in\Omega_n$. Hence by Lemma~\ref{lem:zeta_decomposition} there exists numbers~$n_{i,\ulambda}$ such that
\[
\zeta^{(t,i)}_{\usigma}+\zeta^{(t,-i)}_{\usigma}=\sums{\ulambda\in\Omega_n\\\ulambda(\alpha^i)_1\ge n-t}n_{i,\ulambda}\,\psi^{\ulambda}_{\usigma}
\]
and hence
\begin{equation}
S_i=\sums{\ulambda\in\Omega_n\\\ulambda(\alpha^i)_1\ge n-t}n_{i,\ulambda}\sum_{\usigma\in\Omega_n\cap\Sigma_{\le t}}w(\usigma)\abs{D_{\usigma}}\,\psi^{\ulambda}_{\usigma}.   \label{eqn:S_expanded}
\end{equation}
Since~\eqref{eqn:eqn_system} holds, we conclude that $S_i=0$ for each $i$ satisfying $1\le i\le \lfloor q/2\rfloor$. Since~$\zeta^{(t,0)}$ is a permutation character, it contains the trivial character with multiplicity $1$ (this can be seen by Frobenius reciprocity, for example). Hence we have $n_{0,\ulambda}=2$ for $\ulambda\in\Omega_n$ satisfying $\ulambda(1)=(n)$. We therefore find from~\eqref{eqn:S_expanded} and~\eqref{eqn:eqn_system} that
\[
S_0=2+\eta\sums{\ulambda\in\Omega_n\\n-t\le\ulambda(1)_1<n}n_{0,\ulambda}\,\psi^{\ulambda}(1)=2+2\eta(\zeta^{(t,0)}(1)-1).
\]
Since $\zeta^{(t,0)}(1)$ equals the number of $t$-tuples of linearly independent elements of $\FF_q^n$, we have
\begin{equation}
\zeta^{(t,0)}(1)=(q^n-1)(q^n-q)\cdots(q^n-q^{t-1}).   \label{eqn:degree_zeta}
\end{equation}
Therefore $S_0=0$ and so $S_i=0$ for each $i$ satisfying $0\le i\le \lfloor q/2\rfloor$. Since each element of $C_{\usigma_0}$ fixes a $t$-tuple of linearly independent elements of $\FF_q^n$, we have $\zeta^{(t,0)}_{\usigma_0}\ne 0$. Thus~\eqref{eqn:S_i_eqn_system} implies
\[
\sum_{j=0}^{\lfloor q/2\rfloor}w(\usigma_j)\,\abs{D_{\usigma_j}}\,\,\cos\bigg(\frac{2\pi ij}{q-1}\bigg)=0\quad\text{for each $i$ satisfying $0\le i\le \lfloor q/2\rfloor$}
\]
and it is readily verified, using that $(\omega^{ij})_{0\le i,j<q-1}$ is a Vandermonde matrix, that this in turn implies that $w(\usigma_j)=0$ for all $j$ satisfying $0\le j\le \lfloor q/2\rfloor$, as required.
\par
Now, for each $\ulambda\in\Omega_n$ satisfying $n-t\le\ulambda(1)_1<n$, we find from Lemma~\ref{lem:zeta_decomposition} that
\[
\abs{\eta}\, \psi^{\ulambda}(1)\le\abs{\eta}\,(\zeta^{(t,0)}(1)-1)=1,
\]
using~\eqref{eqn:degree_zeta}. Since $\psi^{\ulambda}(1)=\chi^{\ulambda}(1)=1$ for $\ulambda\in\Pi_{0,0}$, we conclude from~\eqref{eqn:eqn_system} that
\[
\Biggabs{\sum_{\usigma\in\Omega_n\cap\Sigma_{\le t}}w(\usigma)\abs{D_{\usigma}}\,\psi^{\ulambda}_{\usigma}}\le 1   \quad\text{for each $\ulambda\in\Omega_n\cap\Pi_{\le t}$}.
\]
By Lemma~\ref{pro:matrix_full_rank} all entries of $Q_t$ (which are precisely the values of $\psi^{\ulambda}_{\usigma}$ occuring in the sum) are independent of $n$ and so are uniformly bounded by some value only depending on $t$. The same also holds for the inverse of~$Q_t$, which establishes~\eqref{eqn:bound_w}.
\end{proof}
\par
In what follows we treat the remaining eigenvalues.
\begin{lemma}
\label{lem:remaining_eigenvalues}
Let $n$ and $t$ be positive integers satisfying $n>2t$ and let $w\in\RR(\Omega_n\cap\Sigma_{\le t})$ be such that
\[
\abs{w(\usigma)}\le \frac{\gamma_t}{\abs{D_{\usigma}}}\quad\text{for all $\usigma\in\Omega_n\cap\Sigma_{\le t}$}
\]
for some constant $\gamma_t$ depending only on $t$. Then 
\[
\Biggabs{\sum_{\usigma\in\Omega_n\cap\Sigma_{\le t}}w(\usigma)P(\ulambda,\usigma)}<\abs{\eta}\quad\text{for all $\ulambda\in\Omega_n\setminus\Pi_{\le t}$},
\]
provided that $n$ is sufficiently large compared to $t$.
\end{lemma}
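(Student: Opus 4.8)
My plan is to estimate the sum crudely: factor out the degree $\psi^{\ulambda}(1)$, which is enormous when $\ulambda\notin\Pi_{\le t}$, and show that everything else is negligible against it. Using $P(\ulambda,\usigma)=\frac{\abs{D_{\usigma}}}{\psi^{\ulambda}(1)}\psi^{\ulambda}_{\usigma}$ together with the hypothesis $\abs{w(\usigma)}\,\abs{D_{\usigma}}\le\gamma_t$, I would first write
\[
\Biggabs{\sum_{\usigma\in\Omega_n\cap\Sigma_{\le t}}w(\usigma)P(\ulambda,\usigma)}\le\frac{\gamma_t}{\psi^{\ulambda}(1)}\sum_{\usigma\in\Omega_n\cap\Sigma_{\le t}}\bigabs{\psi^{\ulambda}_{\usigma}}.
\]
Since each $\usigma\in\Sigma_{\ell,j}$ has $\usigma(h_{\ell,j})=(1)$ with $\abs{h_{\ell,j}}=n-\ell$, the remaining data of $\usigma$ is a mapping in $\Lambda$ of norm $\ell\le t$, so $\abs{\Omega_n\cap\Sigma_{\le t}}\le c_t$ for a constant $c_t$ depending only on $q$ and $t$. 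As $\abs{\eta}>q^{-nt}$, it then suffices to bound each $\bigabs{\psi^{\ulambda}_{\usigma}}$ by a power of $q$ whose exponent is $O(n)$ with a small coefficient of $n$, and to bound $\psi^{\ulambda}(1)$ below by $q^{nt}$ times a factor tending to $\infty$ with $n$.

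For the numerator, I would exploit the structure of $\usigma\in\Sigma_{\ell,j}$ with $\ell\le t$: the Jordan form $R_{\usigma}$ is the direct sum of the companion matrix of $h_{\ell,j}$, a regular elliptic element of $G_{n-\ell}$, and an element of $G_\ell$, and the two characteristic polynomials are coprime, so (by Lemma~\ref{lem:sizes_cc}, or directly) $\abs{C_{G_n}(g)}<(q^{n-\ell}-1)\abs{G_\ell}\le q^{n+t^2}$ for $g\in C_{\usigma}$. Column orthogonality of the character table then gives $\bigabs{\chi^{\ulambda}(g)}^2\le\abs{C_{G_n}(g)}$ for every $g$, whence $\bigabs{\psi^{\ulambda}_{\usigma}}\le 2\sqrt{q^{n+t^2}}$ for all $\usigma\in\Omega_n\cap\Sigma_{\le t}$, the factor $2$ covering the possibility $\psi^{\ulambda}=\chi^{\ulambda}+\chi^{\ulambda^*}$.

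The hard part will be the lower bound on $\chi^{\ulambda}(1)$ for $\ulambda\in\Omega_n\setminus\Pi_{\le t}$, that is, under the constraint $\ulambda(\alpha^i)_1\le n-t-1$ for every $i$. From Lemma~\ref{lem:degrees_characters}, bounding $q^{\abs{f}h_{i,j}}-1<q^{\abs{f}h_{i,j}}$ and using $\sum_{(i,j)\in\lambda}h_{i,j}(\lambda)=\abs{\lambda}+b(\lambda)+b(\lambda')$, I would obtain $\chi^{\ulambda}(1)>c_0\,q^{\binom n2-E(\ulambda)}$ with $c_0=\prod_{i\ge1}(1-q^{-i})$ and $E(\ulambda)=\sum_{f\in\Phi}\abs{f}\,b(\ulambda(f)')=\tfrac12\sum_f\abs{f}\sum_a\ulambda(f)_a^2-\tfrac n2$. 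The key estimate I would establish is that, viewing the numbers $\abs{f}\ulambda(f)_a$ as block sizes summing to $n$ (those from linear $f$ bounded by $n-t-1$, and those from $f$ of degree $d\ge2$ contributing only $(\abs{f}\ulambda(f)_a)^2/d$), a convexity argument forces $E(\ulambda)\le\binom{n-t-1}2+\binom{t+1}2$, with the maximum attained at a single linear block configuration $\ulambda(\alpha^i)=(n-t-1,\,t+1)$. Granting this,
\[
\psi^{\ulambda}(1)\ge\chi^{\ulambda}(1)>c_0\,q^{\binom n2-\binom{n-t-1}2-\binom{t+1}2}=c_0\,q^{(t+1)(n-t-1)}=c_0\,q^{nt+n-(t+1)^2}.
\]

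Combining the three estimates — $\abs{\Omega_n\cap\Sigma_{\le t}}\le c_t$, $\bigabs{\psi^{\ulambda}_{\usigma}}\le 2q^{(n+t^2)/2}$, and $\psi^{\ulambda}(1)>c_0\,q^{nt+n-(t+1)^2}$ — yields
\[
\Biggabs{\sum_{\usigma\in\Omega_n\cap\Sigma_{\le t}}w(\usigma)P(\ulambda,\usigma)}<\frac{2c_t\gamma_t}{c_0}\,q^{-nt-n/2+(t+1)^2+t^2/2},
\]
which falls below $q^{-nt}<\abs{\eta}$ as soon as $q^{n/2}>\tfrac{2c_t\gamma_t}{c_0}\,q^{(t+1)^2+t^2/2}$; the right-hand side depends only on $q$ and $t$, so this holds for all $n$ sufficiently large compared with $t$. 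The only genuinely delicate point is the degree estimate of the previous paragraph: carrying out the optimisation carefully and confirming that $c_t$, $c_0$, and the additive $O(t^2)$ term in the exponent are all independent of $n$; everything else is bookkeeping.
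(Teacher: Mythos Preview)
Your proposal is correct and follows essentially the same three-step approach as the paper: bound $\abs{\psi^{\ulambda}_{\usigma}}$ by a constant times $\sqrt{\abs{G_n}/\abs{C_{\usigma}}}$ (the paper phrases this as Cauchy--Schwarz on the inner product $\ang{\psi^{\ulambda},1_{D_{\usigma}}}$, you as column orthogonality---these are the same inequality), bound the centralizer size by $q^{n+O_t(1)}$ (the paper's Lemma~\ref{lem:estimation_cc}, with a cruder constant $q^{t^5}$), and bound the degree below by a constant times $q^{n(t+1)}$ via the hook-length formula (the paper's Lemma~\ref{lem:estimate_degree}). Your organisation of the degree estimate through the identity $\sum_{(i,j)\in\lambda}h_{i,j}(\lambda)=\abs{\lambda}+b(\lambda)+b(\lambda')$ and a convexity optimisation of $E(\ulambda)=\sum_f\abs{f}\,b(\ulambda(f)')$ is somewhat tidier than the paper's case analysis in Section~\ref{sec:estimates}, but the two arguments establish the same bound and are interchangeable.
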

\par
In the proof of the lemma we use the usual scalar product on class functions of $G_n$, which is given by
\begin{equation}
\ang{\chi,\psi}=\frac{1}{\abs{ G_n}}\sum_{g\in G_n}\chi(g)\overline{\psi(g)},   \label{eqn:def_sclara_product}
\end{equation}
where $\chi,\psi$ are class functions of $G_n$.
\begin{proof}[Proof of Lemma~\ref{lem:remaining_eigenvalues}]
By the definition~\eqref{eqn:ev_graph} of $P(\ulambda,\usigma)$ and~\eqref{eqn:psi_constant_on_D} we have
\begin{equation}
P(\ulambda,\usigma)=\frac{\abs{G_n}}{\psi^{\ulambda}(1)}\,\ang{\psi^{\ulambda},1_{D_{\usigma}}}.   \label{eqn:eigenvalue_scalar_product}
\end{equation}
Since $\chi^{\ulambda}$ is irreducible, we have $\ang{\psi^{\ulambda},\psi^{\ulambda}}=1$ or $2$ and therefore we obtain, by an application of the Cauchy-Schwarz inequality,
\[
\abs{\ang{\psi^{\ulambda},1_{D_{\usigma}}}}\le \sqrt{2\,\ang{1_{D_{\usigma}},1_{D_{\usigma}}}}=\sqrt\frac{2\abs{D_{\usigma}}}{\abs{ G_n}}.
\]
From~\eqref{eqn:eigenvalue_scalar_product} and our hypothesis on $w$ we then find that
\begin{align*}
\Biggabs{\sum_{\usigma\in\Omega_n\cap\Sigma_{\le t}}w(\usigma)P(\ulambda,\usigma)}&\le \sum_{\usigma\in\Omega_n\cap\Sigma_{\le t}}\abs{w(\usigma)}\,\abs{P(\ulambda,\usigma)}\\
&\le\sum_{\usigma\in\Omega_n\cap\Sigma_{\le t}}\frac{\gamma_t}{\abs{D_{\usigma}}}\,\frac{\abs{G_n}}{\psi^{\ulambda}(1)}\sqrt{\frac{2\abs{D_{\usigma}}}{\abs{ G_n}}}\\
&\le\frac{\gamma_t\,\abs{\Sigma_{\le t}}}{\psi^{\ulambda}(1)}\max_{\usigma\in\Omega_n\cap\Sigma_{\le t}}\sqrt{\frac{2\abs{G_n}}{\abs{D_{\usigma}}}}\\
&\le\frac{\gamma_t\,\abs{\Sigma_{\le t}}}{\chi^{\ulambda}(1)}\max_{\usigma\in\Sigma_{\le t}}\sqrt{\frac{2\abs{G_n}}{\abs{C_{\usigma}}}}.
\end{align*}
Note that $\abs{\Sigma_{\le t}}$ is independent of~$n$. Using Lemmas~\ref{lem:estimation_cc} and~\ref{lem:estimate_degree}, to be stated and proved in Section~\ref{sec:estimates}, we find that there is a constant $\gamma'_t$, depending only on $t$, such that
\[
\Biggabs{\sum_{\usigma\in\Omega_n\cap\Sigma_{\le t}}w(\usigma)P(\ulambda,\usigma)}\le \frac{\gamma'_t}{q^{n/2}}\,\frac{1}{q^{nt}}
\]
for all $\ulambda\in\Omega_n\setminus\Pi_{\le t}$ and all sufficiently large $n$. The right hand side is certainly strictly smaller than $1/q^{nt}$ for all sufficiently large $n$ and the proof is completed by noting that $\abs{\eta}>1/q^{nt}$.
\end{proof}
\par
Recall that $V_{\ulambda}$ is the column span of $E_{\ulambda}$. Define
\[
U_t=\sums{\ulambda\in\Omega_n\\\ulambda(1)_1\ge n-t} V_{\ulambda}.
\]
Now we obtain the following.
\begin{theorem}
Let $t$ be a positive integer. Then, for all sufficiently large $n$, the following holds.
\begin{enumerate}[(i)]
\item Every $t$-intersecting set $Y$ in $ G_n$ satisfies
\[
\abs{Y}\le \prod_{i=t}^{n-1}(q^n-q^i)
\]
and, in case of equality, we have $1_Y\in U_t$.

\item Every pair of $t$-cross-intersecting sets $Y,Z$ in $ G_n$ satisfies
\[
\sqrt{\abs{Y}\cdot\abs{Z}}\le \prod_{i=t}^{n-1}(q^n-q^i)
\]
and, in case of equality, we have $1_Y,1_Z\in U_t$.
\end{enumerate}
\end{theorem}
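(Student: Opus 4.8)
The plan is to apply the weighted Hoffman bound of Proposition~\ref{pro:Hoffman} to the graph $\Gamma$ on the vertex set $G_n$ in which two elements $x,y$ are adjacent exactly when $\rk(x-y)>n-t$; since $x-y=x(I-x^{-1}y)$, this happens exactly when $x^{-1}y$ is a $t$-derangement, so $\Gamma$ is a loopless graph in which a subset $Y$ of $G_n$ is $t$-intersecting if and only if it is an independent set, and a pair $Y,Z$ is $t$-cross-intersecting if and only if $\Gamma$ has no edge between $Y$ and $Z$. First I would fix $n>2t$ and take the weight function $w\in\RR(\Omega_n\cap\Sigma_{\le t})$ supplied by Proposition~\ref{pro:equation_system}: it vanishes on the classes $\usigma$ with $\usigma(1)=(1^t)$, and for each remaining $\usigma\in\Omega_n\cap\Sigma_{\le t}$ the set $D_{\usigma}$ consists of $t$-derangements (the elements of $C_{\usigma}$, and of $C_{\usigma^*}$, fix no $t$-space at all when $\usigma\in\Sigma_{\le t-1}$, and fix no $t$-space pointwise when $\usigma\in\Sigma_t$). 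Hence the graph $\Gamma_{\usigma}$ with adjacency matrix $A_{\usigma}$ is, for each $\usigma$ in the support of $w$, an $\abs{D_{\usigma}}$-regular spanning subgraph of $\Gamma$.

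By Lemma~\ref{lem:decomposition_RX} I would then fix an orthonormal basis of $\RR(G_n)$ refining the orthogonal decomposition $\bigoplus_{\ulambda\in\Omega_n}V_{\ulambda}$ and containing the all-ones vector $v_0$, which spans the summand $V_{\ulambda}$ for $\ulambda$ the trivial character; this is a common eigenbasis of the $A_{\usigma}$, a basis vector in $V_{\ulambda}$ having eigenvalue $P(\ulambda,\usigma)$ in $\Gamma_{\usigma}$, so the combined eigenvalue on $V_{\ulambda}$ is $\sum_{\usigma}w(\usigma)P(\ulambda,\usigma)$. By Proposition~\ref{pro:equation_system} this combined eigenvalue is $1$ on $V_{\ulambda}$ for $\ulambda\in\Omega_n\cap\Pi_{0,0}$ (so that $P(0)=1$), is $\eta$ on $V_{\ulambda}$ for $\ulambda\in\Omega_n\cap\Pi_{k,0}$ with $1\le k\le t$, and is $0$ on $V_{\ulambda}$ for $\ulambda\in\Omega_n\cap\Pi_{k,i}$ with $0\le k\le t$ and $1\le i\le q-2$; and by Lemma~\ref{lem:remaining_eigenvalues}, provided $n$ is large, it has absolute value strictly less than $\abs{\eta}$ on $V_{\ulambda}$ for $\ulambda\in\Omega_n\setminus\Pi_{\le t}$. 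As these cases exhaust $\Omega_n$, we conclude, in the notation of Proposition~\ref{pro:Hoffman}, that $P_{\min}=\eta$ and $P_{\max}=\abs{\eta}$, both extremal values being attained already on $V_{\ulambda}$ for $\ulambda\in\Omega_n\cap\Pi_{1,0}$.

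Writing $N=\prod_{i=0}^{t-1}(q^n-q^i)$, so that $\eta=-1/(N-1)$ by~\eqref{eqn:def_min_ev}, both ratios in Proposition~\ref{pro:Hoffman} equal $\abs{\eta}/(P(0)+\abs{\eta})=1/N$; since $\abs{G_n}=\prod_{i=0}^{n-1}(q^n-q^i)$, parts~(i) and~(ii) of that proposition give $\abs{Y}\le\abs{G_n}/N=\prod_{i=t}^{n-1}(q^n-q^i)$ and $\sqrt{\abs{Y}\cdot\abs{Z}}\le\prod_{i=t}^{n-1}(q^n-q^i)$, respectively. For the equality statements, note that on any eigenvector other than $v_0$ the combined eigenvalue equals $\eta$ exactly when its absolute value equals $\abs{\eta}$ (the only other possibilities being $0$ and values of absolute value strictly below $\abs{\eta}$); hence $\ang{\{v_0\}\cup\{v_k:P(k)=P_{\min}\}}$ and $\ang{\{v_0\}\cup\{v_k:\abs{P(k)}=P_{\max}\}}$ coincide, and since $v_0$ spans $V_{\ulambda}$ for $\ulambda\in\Omega_n\cap\Pi_{0,0}$ they both equal
\[
\bigoplus_{k=0}^{t}\ \bigoplus_{\ulambda\in\Omega_n\cap\Pi_{k,0}}V_{\ulambda}=\bigoplus_{\substack{\ulambda\in\Omega_n\\\ulambda(1)_1\ge n-t}}V_{\ulambda}=U_t.
\]
Thus equality in part~(i) forces $1_Y\in U_t$, and equality in part~(ii) forces $1_Y,1_Z\in U_t$.

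The main obstacle lies not in this assembly but in the two inputs it relies on: building the weight function $w$ with precisely the prescribed eigenvalue profile on $\Omega_n\cap\Pi_{\le t}$ (Proposition~\ref{pro:equation_system}, which rests on the invertibility and $n$-independence of the matrix $Q_t$, Proposition~\ref{pro:matrix_full_rank}), and controlling all the remaining eigenvalues via the crude estimate of Lemma~\ref{lem:remaining_eigenvalues}; it is this latter step that dictates the hypothesis that $n$ be sufficiently large. Within the present argument the only points needing care are verifying that the non-exceptional classes in $\Sigma_{\le t}$ consist of $t$-derangements (which uses $n>2t$) and that the two subspaces produced by Proposition~\ref{pro:Hoffman}(i) and~(ii) are both equal to $U_t$; both are routine.
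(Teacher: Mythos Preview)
Your proposal is correct and follows essentially the same route as the paper: take the weight vector $w$ from Proposition~\ref{pro:equation_system}, use Lemma~\ref{lem:decomposition_RX} to get the common eigenbasis and Lemma~\ref{lem:remaining_eigenvalues} to handle the residual eigenvalues, then read off both bounds and the equality subspace $U_t$ from Proposition~\ref{pro:Hoffman}. The only cosmetic difference is that you take the ambient graph $\Gamma$ to be the full $t$-derangement graph, whereas the paper uses the smaller graph with adjacency matrix $\sum_{\usigma}A_{\usigma}$ over the support of $w$; since Proposition~\ref{pro:Hoffman} only requires the $\Gamma_{\usigma}$ to be spanning subgraphs of $\Gamma$, both choices work equally well.
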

\begin{proof}
As explained at the end of Section~\ref{sec:association_schemes}, we apply Proposition~\ref{pro:Hoffman} to the graph with adjacency matrix 
\[
\sums{\usigma\in\Omega_n\cap\Sigma_{\le t}\\\usigma(1)\ne (1^t)}A_{\usigma}
\]
and the $\abs{D_{\usigma}}$-regular spanning subgraphs with adjacency matrix $A_{\usigma}$ for those $\usigma$ occuring in the above set union. Since none of the elements in $D_{\usigma}$ for such $\usigma$ fix a $t$-space pointwise, every $t$-intersecting set in $G_n$ is an independent set in this graph. Recall from Lemma~\ref{lem:decomposition_RX} that every element of $V_{\ulambda}$ is an eigenvector of $A_{\usigma}$ with eigenvalue~$P(\ulambda,\usigma)$. Let $w\in\RR(\Omega_n\cap\Sigma_{\le t})$ be the vector given by Proposition~\ref{pro:equation_system} and write
\[
P(\ulambda)=\sums{\usigma\in\Omega_n\cap\Sigma_{\le t}\\\usigma(1)\ne (1^t)}w(\usigma)P(\ulambda,\usigma).
\]
Proposition~\ref{pro:equation_system} and Lemma~\ref{lem:remaining_eigenvalues} imply that, for all sufficiently large $n$, we have
\[
P(\ulambda)=
\begin{cases}
1     & \text{for $\ulambda(1)_1=n$}\\
\eta & \text{for $n-t\le \ulambda(1)_1<n$}\\
\end{cases}
\]
and $\abs{P(\ulambda)}<\abs{\eta}$ for $\ulambda(1)_1<n-t$. Hence, writing $\ulambda_0$ for $X-1\mapsto (n)$, we have $P(\ulambda_0)=1$ and
\[
\eta=\min_{\ulambda\ne \ulambda_0}P(\ulambda)\quad\text{and}\quad\abs{\eta}=\max_{\ulambda\ne \ulambda_0}\,\abs{P(\ulambda)}.
\]
Then the required result follows from Proposition~\ref{pro:Hoffman} and the decomposition of $\RR(G_n)$ given in Lemma~\ref{lem:decomposition_RX}.
\end{proof}
\par
Our proof of Theorems~\ref{thm:main_intersection} and~\ref{thm:main_cross_intersection} is completed by the following result.
\begin{theorem}
\label{thm:Ut_span}
$U_t$ is spanned by the characteristic vectors of $t$-cosets.
\end{theorem}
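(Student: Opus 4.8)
The plan is to identify both $U_t$ and the span $W$ of the characteristic vectors of $t$-cosets as $G_n\times G_n$-submodules of $\CC(G_n)$, where $G_n\times G_n$ acts via $((a,b)f)(g)=f(a^{-1}gb)$, and then to check that they coincide. Recall from the group association scheme behind \eqref{eqn:def_F_lambda} and Lemma~\ref{lem:decomposition_RX} that $\CC(G_n)=\bigoplus_{\ulambda\in\Lambda_n}B_{\ulambda}$, where $B_{\ulambda}$ is the image of the primitive idempotent $F_{\ulambda}$; each $B_{\ulambda}$ is an irreducible $G_n\times G_n$-module, the $B_{\ulambda}$ are pairwise non-isomorphic, and under the left regular action alone $B_{\ulambda}$ is isotypic with underlying irreducible character $\overline{\chi}^{\ulambda}=\chi^{\ulambda^*}$. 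Since $(X-1)^*=X-1$ we have $\ulambda^*(1)_1=\ulambda(1)_1$, so the set $\{\ulambda:\ulambda(1)_1\ge n-t\}$ is invariant under $\ulambda\mapsto\ulambda^*$; comparing with \eqref{eqn:def_A_and_E} shows that the complexification of $U_t$ is $\bigoplus_{\ulambda:\,\ulambda(1)_1\ge n-t}B_{\ulambda}$. Finally, by Lemma~\ref{lem:zeta_decomposition}, the index set $\{\ulambda:\ulambda(1)_1\ge n-t\}$ is precisely the set of $\ulambda$ for which $\chi^{\ulambda}$ occurs in the permutation character $\zeta^{(t,0)}$ of $G_n$ on the set $X_t$ of $t$-tuples of linearly independent vectors in $\FF_q^n$.

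Next I would describe $W$ module-theoretically. Fix $u=(u_1,\dots,u_t)\in X_t$ and let $\phi_u\colon G_n\to X_t$ be the orbit map $g\mapsto(gu_1,\dots,gu_t)$. It is surjective, since $G_n$ acts transitively on $X_t$, and equivariant for left translation on $G_n$ and the left action on $X_t$, and its fibres are exactly the $t$-cosets with ``source'' $u$. Hence the pullback $\phi_u^*\colon\CC(X_t)\to\CC(G_n)$ sends the indicator of a point of $X_t$ to the characteristic vector of a $t$-coset, and as $u$ runs over $X_t$ every $t$-coset arises this way, so the complexification of $W$ equals $\sum_{u\in X_t}\phi_u^*(\CC(X_t))$. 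Now $\phi_u^*$ is injective and intertwines the left action on $\CC(X_t)$ with the left regular action on $\CC(G_n)$, so $\phi_u^*(\CC(X_t))\cong\CC(X_t)\cong\Ind_{H_{n,t}}^{G_n}1$ as left $G_n$-modules; in particular, as a left submodule of $\CC(G_n)$ it contains exactly those irreducibles occurring in $\zeta^{(t,0)}$, each with positive multiplicity. Matching this against the left-module types $\chi^{\ulambda^*}$ of the blocks $B_{\ulambda}$ (and using that $\ulambda\mapsto\ulambda^*$ preserves $\{\ulambda:\ulambda(1)_1\ge n-t\}$) yields $\phi_u^*(\CC(X_t))\subseteq\bigoplus_{\ulambda:\,\ulambda(1)_1\ge n-t}B_{\ulambda}$, and therefore $W\subseteq U_t$.

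For the reverse inclusion, observe that $W$ is also invariant under the right regular action: right translation by $h\in G_n$ carries $\phi_u^*(\CC(X_t))$ onto $\phi_{hu}^*(\CC(X_t))$, and $u\mapsto hu$ permutes $X_t$. Thus $W$ is a $G_n\times G_n$-submodule of $\CC(G_n)$, hence a direct sum of some of the blocks $B_{\ulambda}$, the $B_{\ulambda}$ being irreducible and pairwise non-isomorphic. Since $W$ contains the single copy $\phi_u^*(\CC(X_t))$, which (being isomorphic to $\Ind_{H_{n,t}}^{G_n}1$ as a left module) has nonzero projection onto $B_{\ulambda}$ for every $\ulambda$ with $\ulambda(1)_1\ge n-t$, each such $B_{\ulambda}$ is among the summands of $W$; combined with $W\subseteq U_t$ this forces equality of the complexifications, and since both $W$ and $U_t$ are defined over $\RR$ we conclude $W=U_t$. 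The argument is essentially formal once the objects are in place; the only step requiring genuine care is the left/right bookkeeping --- in particular, verifying that the description of $U_t$ coming from the idempotents $F_{\ulambda}$ agrees with the one coming from the permutation module $\CC(X_t)$ via Lemma~\ref{lem:zeta_decomposition}. Everything else (transitivity of $G_n$ on $X_t$ giving surjectivity of $\phi_u$, injectivity and equivariance of $\phi_u^*$, and the classification of $G_n\times G_n$-submodules of $\CC(G_n)$) is routine.
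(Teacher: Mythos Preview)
Your proof is correct, but it takes a different route from the paper's. The paper argues directly with matrices: it forms the incidence matrix $M_t\in\CC(G_n,\Ac_t\times\Ac_t)$ whose columns are the $t$-coset indicators, computes $M_tM_t^T=C_t$ where $C_t(x,y)=\zeta^{(t,0)}(x^{-1}y)$, and then uses Lemma~\ref{lem:zeta_decomposition} to write $C_t=\abs{G_n}\sum_{\ulambda(1)_1\ge n-t}(m_{\ulambda}/\chi^{\ulambda}(1))E_{\ulambda}$ with all $m_{\ulambda}\ne 0$. Since the $E_{\ulambda}$ are orthogonal idempotents, the column span of $C_t$ is exactly $U_t$, while the column span of $M_tM_t^T$ equals that of $M_t$, which is the span of the $t$-coset indicators. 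Your argument instead exploits the $G_n\times G_n$-module structure of $\CC(G_n)$: you identify $U_t$ as a sum of irreducible bimodule blocks, observe that the span $W$ is bi-invariant (hence a sum of such blocks), and match the blocks via the left-module type of the pullback $\phi_u^*(\CC(X_t))\cong\Ind_{H_{n,t}}^{G_n}1$. Both approaches ultimately hinge on Lemma~\ref{lem:zeta_decomposition}; the paper's is more self-contained and avoids invoking the Peter--Weyl decomposition explicitly, while yours makes the structural reason (bi-invariance of $W$) transparent and would transfer verbatim to other groups once the analogue of Lemma~\ref{lem:zeta_decomposition} is in hand.
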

\begin{proof}
Let $\Ac_t$ be the set of $t$-tuples of linearly independent elements of $\FF_q^n$. Define the incidence matrix $M_t\in\CC( G_n,\Ac_t\times\Ac_t)$ of elements of $ G_n$ versus $t$-cosets by
\[
M_t(x,(u,v))=\begin{cases}
1 & \text{for $xu=v$}\\
0 & \text{otherwise},
\end{cases}
\]
so that the columns of $M_t$ are precisely the characteristic vectors of the $t$-cosets. Let $\zeta^t=\zeta^{(t,0)}$ be the permutation character of the set of $t$-tuples of linearly independent elements of $\FF_q^n$ and define $C_t\in\CC( G_n, G_n)$ by 
\[
C_t(x,y)=\zeta^t(x^{-1}y).
\]
Denoting by $\1_{xu=v}$ the indicator of the event that $x\in G_n$ maps $u$ to $v$, we have
\begin{align*}
(M_tM_t^T)(x,y)&=\sum_{u,v}M_t(x,(u,v))M_t(y,(u,v))\\
&=\sum_{u,v}\1_{xu=v}\1_{yu=v}\\
&=\sum_{u}\1_{xu=yu}\\
&=\sum_{u}\1_{x^{-1}yu=u}\\
&=\zeta^t(x^{-1}y)=C_t(x,y).
\end{align*}
Hence we have $C_t=M_tM_t^T$ and so the column span of $C_t$ equals the column span of~$M_t$ or equivalently the span of the characteristic vectors of the $t$-cosets.
\par
From Lemma~\ref{lem:zeta_decomposition} we have
\[
\zeta^t=\sums{\ulambda\in\Lambda_n\\\ulambda(1)_1\ge n-t} m_{\ulambda}\,\chi^{\ulambda}
\]
for some integers $m_{\ulambda}$ satisfying $m_{\ulambda}\ne 0$ for each $\ulambda$ occuring in the summation. Since~$\zeta^t$ is real-valued, we find by Lemma~\ref{lem:conjugates} that $m_{\ulambda^*}=m_{\ulambda}$ and therefore have
\begin{equation}
\zeta^t=\sums{\ulambda\in\Omega_n\\\ulambda(1)_1\ge n-t} m_{\ulambda}\,\psi^{\ulambda}.   \label{eqn:zeta_decomp_psi}
\end{equation}
Lemma~\ref{lem:degrees_characters} implies that $\chi^{\ulambda}(1)=\chi^{\ulambda^*}(1)$. We therefore obtain from~\eqref{eqn:def_A_and_E} and~\eqref{eqn:def_F_lambda} that
\[
E_{\ulambda}(x,y)=\frac{\chi^{\ulambda}(1)}{\abs{ G_n}}\psi^{\ulambda}(x^{-1}y)
\]
and thus find from~\eqref{eqn:zeta_decomp_psi} that
\begin{equation}
C_t=\abs{ G_n}\sums{\ulambda\in\Lambda_n\\\ulambda(1)_1\ge n-t}\frac{m_{\ulambda}}{\chi^{\ulambda}(1)}\,E_{\ulambda}.   \label{eqn:C_t_from_E_lambda}
\end{equation}
Hence the column span of $C_t$ is contained in $U_t$. Conversely, let $v$ be a column of $E_{\ukappa}$ for some $\ukappa\in\Omega_n$ satisfying $\ukappa(1)_1\ge n-t$. Since $E_{\ulambda}$ is idempotent, we have $E_{\ulambda}v=v$ for $\ukappa=\ulambda$ and Lemma~\ref{lem:decomposition_RX} implies $E_{\ulambda}v=0$ for $\ukappa\ne \ulambda$. Hence from~\eqref{eqn:C_t_from_E_lambda} we find that
\[
C_tv=\abs{ G_n}\frac{m_{\ukappa}}{\chi^{\ukappa}(1)}\,v,
\]
and, since $m_{\ukappa}\ne 0$, we conclude that $v$ is in the column span of $C_t$. This completes the proof.
\end{proof}


\section{Proof of Theorems~\ref{thm:main_intersection_space} and~\ref{thm:main_cross_intersection_space}}
\label{sec:proofs_spaces}

Our proofs of Theorems~\ref{thm:main_intersection_space} and~\ref{thm:main_cross_intersection_space} follow along similar lines as those in the previous section and therefore our proofs will be less detailed.
\par
Since the parabolic subgroup $P_{(t,n-t)}$ is the stabiliser of a $t$-space of~$\FF_q^n$, the character $\xi^{X-1\mapsto (n-t,t)}$ is the permutation character of the set of $t$-spaces of $\FF_q^n$. From~\eqref{eqn:xi_chi} we obtain its decomposition 
\begin{equation}
\xi^{X-1\mapsto (n-t,t)}=\sum_{s=0}^t\chi^{X-1\mapsto (n-s,s)}.   \label{eqn:DecompositionPermutationChar}
\end{equation}
Let $\qbin{n}{k}$ denote the $q$-binomial coefficient, which counts the number of $k$-spaces of~$\FF_q^n$. Then we have
\begin{equation}
\xi^{X-1\mapsto (n-t,t)}(1)=\qbin{n}{t},   \label{eqn:deg_perm_char_spaces}
\end{equation}
and so~\eqref{eqn:DecompositionPermutationChar} implies that
\begin{equation}
\chi^{X-1\mapsto (n-s,s)}(1)=\qbin{n}{s}-\qbin{n}{s-1}.   \label{eqn:deg_chi_char_spaces}
\end{equation}
Also note that $\psi^{X-1\mapsto \lambda}=\chi^{X-1\mapsto \lambda}$ for all partitions $\lambda$. Throughout this section, we define
\begin{align*}
\varepsilon = -\frac{1}{\qbin{n}{t}-1},
\end{align*}
which will be our prescribed extremal eigenvalue.
\par
We begin with the following counterpart of Proposition~\ref{pro:equation_system}.
\begin{proposition}
\label{pro:equation_system_space}
Let $n$ and $t$ be positive integers satisfying $n>2t$. Then there exists $w\in\RR(\Omega_n\cap\Sigma_{\le t-1})$ such that
\begin{equation}
\sum_{\usigma\in\Omega_n \cap \Sigma_{\le t-1}} w(\usigma) P(\ulambda,\usigma)= 
\begin{cases}
1                 & \text{for $\ulambda(1)=(n)$},\\
\varepsilon & \text{for $\ulambda(1)=(n-s,s)$ with $1\le s\le t$},\\
0                 & \text{for $\ulambda \in \Omega_n \cap \Pi_{\le t-1}$, where}\\
                   & \text{\quad\, $\ulambda(1)\ne (n-s,s)$ with $0\le s\leq t-1$}
\end{cases}   \label{eqn:eqn_system_space}
\end{equation}
and 
\begin{equation}
\abs{w(\usigma)} \le \frac{\gamma_t}{\abs{D_{\usigma}}} \quad \text{for all $\usigma\in\Omega_n\cap\Sigma_{\le t-1}$}.   \label{eqn:EstimationOfOmega}
\end{equation}
for some constant $\gamma_t$ depending only on $t$.
\end{proposition}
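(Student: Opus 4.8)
The plan is to mimic the proof of Proposition~\ref{pro:equation_system}, with $\Sigma_{\le t-1}$ taking the role of $\Sigma_{\le t}$ and with the permutation character $\xi^{X-1\mapsto(n-t,t)}$ of the set of $t$-spaces taking the role of $\zeta^{(t,0)}$. First I would invoke Proposition~\ref{pro:matrix_full_rank} with $t$ replaced by $t-1$, which is legitimate since $n>2t>2(t-1)$: the restriction of $Q$ to $\RR(\Omega_n\cap\Pi_{\le t-1},\Omega_n\cap\Sigma_{\le t-1})$ is a square matrix of full rank that is independent of $n$. Since $\ulambda(1)=(n)$ means $\ulambda\in\Omega_n\cap\Pi_{0,0}$ and $\ulambda(1)=(n-s,s)$ with $1\le s\le t-1$ means $\ulambda\in\Omega_n\cap\Pi_{s,0}$, the first line of~\eqref{eqn:eqn_system_space}, the second line restricted to $1\le s\le t-1$, and the third line together prescribe exactly one value for each $\ulambda\in\Omega_n\cap\Pi_{\le t-1}$. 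The resulting square linear system then has a unique solution $w\in\RR(\Omega_n\cap\Sigma_{\le t-1})$, and it remains only to verify that this same $w$ satisfies the one further equation, the case $\ulambda(1)=(n-t,t)$, and to establish~\eqref{eqn:EstimationOfOmega}.

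For the remaining equation, write $\ulambda_s$ for the element of $\Omega_n$ with $\ulambda_s(1)=(n-s,s)$, so that $\psi^{\ulambda_s}=\chi^{X-1\mapsto(n-s,s)}$, and consider
\[
\Theta=\sum_{\usigma\in\Omega_n\cap\Sigma_{\le t-1}}w(\usigma)\,\abs{D_{\usigma}}\,\xi^{X-1\mapsto(n-t,t)}_{\usigma},
\]
which I would evaluate in two ways. On the one hand, the value of $\xi^{X-1\mapsto(n-t,t)}$ at $g$ is the number of $t$-spaces fixed by $g$; for $\usigma\in\Sigma_{\le t-1}$, every $g\in C_{\usigma}$ has an invariant subspace of dimension $n-\ell>n/2$ on which it acts with irreducible characteristic polynomial, so by Lemma~\ref{lem:Cf_fixing_subspace} applied to that block together with the primary decomposition, every $g$-invariant subspace has dimension at most $t-1$ or at least $n-\ell>t$; hence $g$ fixes no $t$-space and $\Theta=0$. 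On the other hand, using~\eqref{eqn:DecompositionPermutationChar}, the definition~\eqref{eqn:ev_graph} of $P(\ulambda,\usigma)$, and the degrees~\eqref{eqn:deg_chi_char_spaces},
\[
\Theta=\sum_{s=0}^{t}\Big(\qbin{n}{s}-\qbin{n}{s-1}\Big)\sum_{\usigma\in\Omega_n\cap\Sigma_{\le t-1}}w(\usigma)\,P(\ulambda_s,\usigma).
\]
Substituting the already known values $1$ for $s=0$ and $\varepsilon$ for $1\le s\le t-1$, and writing $X$ for the inner sum in the case $s=t$, the telescoping identity $1+\varepsilon\sum_{s=1}^{t-1}(\qbin{n}{s}-\qbin{n}{s-1})=(\qbin{n}{t}-\qbin{n}{t-1})/(\qbin{n}{t}-1)$, which uses the defining value of $\varepsilon$, together with $\Theta=0$ forces $X=-1/(\qbin{n}{t}-1)=\varepsilon$, exactly the missing equation.

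Finally, for~\eqref{eqn:EstimationOfOmega} I would follow the last paragraph of the proof of Proposition~\ref{pro:equation_system}: for each $\ulambda\in\Omega_n\cap\Pi_{\le t-1}$ the prescribed value $v_{\ulambda}$ satisfies $\chi^{\ulambda}(1)\,\abs{v_{\ulambda}}\le 1$, since this is clear when $v_{\ulambda}\in\{0,1\}$, and when $v_{\ulambda}=\varepsilon$ one has $\chi^{\ulambda}(1)=\qbin{n}{s}-\qbin{n}{s-1}$ with $1\le s\le t-1$, while $n>2t$ gives $\qbin{n}{s}\le\qbin{n}{t-1}\le\qbin{n}{t}-1=1/\abs{\varepsilon}$. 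Hence $\big|\sum_{\usigma}w(\usigma)\abs{D_{\usigma}}\,\psi^{\ulambda}_{\usigma}\big|\le 1$ for all $\ulambda\in\Omega_n\cap\Pi_{\le t-1}$, and since the matrix of this system and its inverse have entries bounded by a constant depending only on $t$ (Proposition~\ref{pro:matrix_full_rank}) while $\abs{\Omega_n\cap\Sigma_{\le t-1}}$ is likewise bounded in terms of $t$ only, inverting yields $\abs{w(\usigma)}\,\abs{D_{\usigma}}\le\gamma_t$ for a suitable $\gamma_t=\gamma_t(t)$. The step I expect to be the main obstacle is the two-way evaluation of $\Theta$: one must check carefully that $n>2t$ really does force $\xi^{X-1\mapsto(n-t,t)}$ to vanish on every class in $\Sigma_{\le t-1}$, and the exact value of $\varepsilon$ is needed precisely so that the telescoping of the $q$-binomial degrees pins down this last eigenvalue.
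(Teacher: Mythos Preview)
Your proposal is correct and follows essentially the same route as the paper's proof. The only cosmetic difference is in justifying that $\xi^{X-1\mapsto(n-t,t)}$ vanishes on every class in $\Sigma_{\le t-1}$: the paper simply cites Lemma~\ref{lem:xi_reduction} (with $k=t>\ell$), whereas you argue directly from the permutation-character interpretation and Lemma~\ref{lem:Cf_fixing_subspace}, which amounts to the same thing.
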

\begin{proof}
From Lemma~\ref{pro:matrix_full_rank} we know that $Q_{t-1}$ has full rank. In view of~\eqref{eqn:ev_graph} there exists a unique $w\in\RR(\Omega_n\cap\Sigma_{\le t-1})$ satisfying~\eqref{eqn:eqn_system_space} except for $\ulambda$ of the form $\ulambda(1)=(n-t,t)$.
\par
Next we show that~\eqref{eqn:eqn_system_space} also holds when $\ulambda(1)=(n-t,t)$. By Lemma~\ref{lem:xi_reduction} we have $\xi^{X-1\mapsto (n-t,t)}_{\usigma}=0$ for each $\usigma\in\Sigma_{\le t-1}$. Hence we have
\begin{align}
0&=\sum_{\usigma\in\Omega_n \cap \Sigma_{\le t-1}} w(\usigma) \abs{D_{\usigma}}\xi^{X-1\mapsto (n-t,t)}_{\usigma}   \nonumber\\
&=\sum_{s=0}^t\sum_{\usigma\in\Omega_n \cap \Sigma_{\le t-1}} w(\usigma) \abs{D_{\usigma}}\chi^{X-1\mapsto (n-s,s)}_{\usigma},   \label{eqn:sum_of_rhs}
\end{align}
using~\eqref{eqn:DecompositionPermutationChar}. Since~\eqref{eqn:eqn_system_space} holds with the only exception $\ulambda(1)=(n-t,t)$, the inner sum equals $1$ for $s=0$ and $\varepsilon\,\chi^{X-1\mapsto (n-s,s)}(1)$ for each $s$ satisfying $1\le s\le t-1$. Assuming that this is true also for $s=t$ and using~\eqref{eqn:deg_chi_char_spaces}, the right hand side of~\eqref{eqn:sum_of_rhs} is indeed
\[
1+\varepsilon\,\sum_{s=1}^t\bigg(\qbin{n}{s}-\qbin{n}{s-1}\bigg)=1+\varepsilon\,\bigg(\qbin{n}{t}-1\bigg)=0.
\]
Hence~\eqref{eqn:eqn_system_space} also holds when $\ulambda(1)=(n-t,t)$.
\par
It remains to prove~\eqref{eqn:EstimationOfOmega}. For each $s$ satisfying $1\le s\le t$, we find from~\eqref{eqn:DecompositionPermutationChar} that
\[
\abs{\varepsilon}\, \chi^{X-1\mapsto(n-s,s)}(1)\le\abs{\varepsilon}\,(\xi^{X-1\mapsto (n-t,t)}(1)-1)=1,
\]
using~\eqref{eqn:deg_perm_char_spaces}. Since $\chi^{X-1\mapsto (n)}(1)=1$, we conclude from~\eqref{eqn:eqn_system_space} that
\[
\Biggabs{\sum_{\usigma\in\Omega_n\cap\Sigma_{\le t-1}}w(\usigma)\abs{D_{\usigma}}\,\psi^{\ulambda}_{\usigma}}\le 1   \quad\text{for each $\ulambda\in\Omega_n\cap\Pi_{\le t-1}$}.
\]
By Lemma~\ref{pro:matrix_full_rank} all entries of $Q_{t-1}$ are independent of $n$ and so are uniformly bounded by some value only depending on $t$. The same also holds for the inverse of~$Q_t$, which establishes~\eqref{eqn:EstimationOfOmega}.
\end{proof}
\par
The bound~\eqref{eqn:EstimationOfOmega} and Lemma~\ref{lem:remaining_eigenvalues} ensure that the right hand side of~\eqref{eqn:eqn_system_space} is small in modulus for each $\ulambda\in\Omega_n\setminus\Pi_t$. It therefore remains to deal with the case that $\ulambda\in\Omega_n\cap\Pi_t$ except for $\ulambda\in\Omega_n$ given by $\ulambda(1)=(n-t,t)$, which is the subject of the following lemma.
\begin{lemma}
\label{lem:remaining_eigenvalues_space_t}
Let $w\in\RR(\Omega_n\cap\Sigma_{\le t-1})$ be given in Proposition~\ref{pro:equation_system_space} (so that $n>2t$). Then, for all $\ulambda\in \Omega_n\cap \Pi_t$ with $\ulambda(1)\ne (n-t,t)$, we have
\begin{align*}
\biggabs{\sum_{\usigma \in \Omega_n \cap \Sigma_{\le t-1}} w(\usigma) P(\ulambda,\usigma)}<\abs{\varepsilon},
\end{align*}
provided that $n$ is sufficiently large compared to $t$.
\end{lemma}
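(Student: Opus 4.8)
The plan is to reduce the left-hand side to the equation system~\eqref{eqn:eqn_system_space} of Proposition~\ref{pro:equation_system_space}. Note first that the Cauchy--Schwarz estimate used in Lemma~\ref{lem:remaining_eigenvalues} is of no help here: for $\ulambda\in\Pi_t$ the degree $\chi^{\ulambda}(1)$ is only of the same order of magnitude as $\qbin{n}{t}\sim 1/\abs{\varepsilon}$, so that estimate cannot produce anything smaller than $\abs{\varepsilon}$; this is precisely why $\Pi_t$ requires a separate treatment. Instead, I would show that on the conjugacy classes $C_{\usigma}$ with $\usigma\in\Sigma_{\le t-1}$ the character $\psi^{\ulambda}$ collapses onto the span of the $\psi^{\urho}$ with $\urho\in\Omega_n\cap\Pi_{\le t-1}$.

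Fix $\ulambda\in\Omega_n\cap\Pi_t$ with $\ulambda(1)\ne(n-t,t)$ and let $i_0$ be the unique index with $\ulambda(\alpha^{i_0})_1=n-t$ (uniqueness uses $n>2t$). Using~\eqref{eqn:uchi_from_uxi}, write $\chi^{\ulambda}=\sum_{\umu}H_{\umu\ulambda}\xi^{\umu}$. Every $\umu$ occurring satisfies $\umu(f)\unrhd\ulambda(f)$ for all $f$, hence $\umu(\alpha^{i_0})_1\ge n-t$, i.e.\ $\umu\in\Pi_{k,i_0}$ for some $0\le k\le t$; I split this sum as $\chi^{\ulambda}=A+B$, where $A$ collects the terms with $\umu\in\Pi_{t,i_0}$ and $B$ the terms with $\umu\in\Pi_{\le t-1}$. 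By Lemma~\ref{lem:xi_reduction}, $\xi^{\umu}_{\usigma}=0$ whenever $\umu\in\Pi_{t,i_0}$ and $\usigma\in\Sigma_{\le t-1}$, so $A$ vanishes identically on these classes; and re-expanding $B$ into irreducibles via~\eqref{eqn:uxi_from_uchi} produces only $\chi^{\urho}$ with $\urho\in\Pi_{\le t-1}$, because the Kostka matrices are triangular for the dominance order. Passing to the real characters $\psi$ and averaging over the sets $D_{\usigma}$, one obtains real numbers $d_{\urho}$ ($\urho\in\Omega_n\cap\Pi_{\le t-1}$), each bounded in absolute value by a constant depending only on $t$ (here one uses, as in the proof of Lemma~\ref{lem:T_full_rank}, that the relevant Kostka and inverse-Kostka entries do not depend on $n$), such that $\psi^{\ulambda}_{\usigma}=\sum_{\urho}d_{\urho}\,\psi^{\urho}_{\usigma}$ for every $\usigma\in\Omega_n\cap\Sigma_{\le t-1}$. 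Substituting this into~\eqref{eqn:ev_graph} and using~\eqref{eqn:eqn_system_space} to evaluate the inner sums $\sum_{\usigma}w(\usigma)P(\urho,\usigma)$, the left-hand side of the lemma becomes
\[
\frac{1}{\psi^{\ulambda}(1)}\left(d_{\ulambda_0}+\varepsilon\sum_{s=1}^{t-1}d_{s}\,\chi^{X-1\mapsto(n-s,s)}(1)\right),
\]
where $\ulambda_0=(X-1\mapsto(n))$ and $d_s$ abbreviates $d_{(X-1\mapsto(n-s,s))}$; all other $d_{\urho}$ drop out because $\sum_{\usigma}w(\usigma)P(\urho,\usigma)=0$ for them.

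The main obstacle is to show that $d_{\ulambda_0}=0$. If $\ulambda$ is not supported on $X-1$ alone, then every character appearing in the above decomposition has the same shape as $\ulambda$, so $d_{\urho}=0$ for every $\urho$ supported on $X-1$ alone; in particular $d_{\ulambda_0}=0$, and even the whole displayed expression is $0$, so the lemma holds here already for $n>2t$. If $\ulambda=(X-1\mapsto(n-t,\nu))$ with $\nu$ a partition of $t$ other than $(t)$, I would argue as follows: $d_{\ulambda_0}$ is the multiplicity of $\chi^{\ulambda_0}$ in $B$; orthogonality of $\chi^{\ulambda}$ with the trivial character gives $\sum_{\umu}H_{\umu\ulambda}K_{\ulambda_0,\umu}=0$, and since a single-row Young diagram admits a unique semistandard tableau of any content we have $K_{\ulambda_0,\umu}=1$ for all the $\umu$ that occur, so $\sum_{\umu}H_{\umu\ulambda}=0$; the portion of this sum with $\umu\in\Pi_{t,i_0}$ equals, after the stabilisation of Kostka numbers, $\sum_{\mu}H_{\mu\nu}$ over partitions $\mu$ of $t$, and this is $\delta_{\nu,(t)}=0$ because both $(\sum_{\mu}H_{\mu\nu})_{\nu}$ and $(\delta_{\nu,(t)})_{\nu}$ solve $x^{\mathsf T}K=\mathbf 1^{\mathsf T}$ for the size-$t$ Kostka matrix $K$ (using $K_{(t),\mu}=1$ for every $\mu$). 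Hence $d_{\ulambda_0}=0$ and the displayed expression reduces to its $\varepsilon$-term. Using $\chi^{X-1\mapsto(n-s,s)}(1)\le\qbin{n}{s}\le\qbin{n}{t-1}$, the bound on the $d_s$, and $\psi^{\ulambda}(1)\ge\chi^{\ulambda}(1)$, this term has absolute value at most $c_t\,\abs{\varepsilon}\,\qbin{n}{t-1}/\chi^{\ulambda}(1)$ for a constant $c_t$ depending only on $t$; and via the hook-length formula (Lemma~\ref{lem:degrees_characters}) one checks that $\chi^{\ulambda}(1)=\chi^{X-1\mapsto(n-t,\nu)}(1)$ is a polynomial in $q$ whose degree exceeds the degree $(t-1)(n-t+1)$ of $\qbin{n}{t-1}$ by $n$ minus a constant depending only on $t$, so that $\chi^{\ulambda}(1)>c_t\,\qbin{n}{t-1}$ and hence the whole quantity is below $\abs{\varepsilon}$ once $n$ is large enough.
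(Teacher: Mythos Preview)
Your argument is correct and follows the same overall strategy as the paper: write $\chi^{\ulambda}=\sum_{\umu}H_{\umu\ulambda}\xi^{\umu}$, drop the $\umu\in\Pi_t$ terms using Lemma~\ref{lem:xi_reduction}, re-expand the remainder via Kostka numbers to land in $\Pi_{\le t-1}$, and then feed the result into~\eqref{eqn:eqn_system_space}. The case split according to whether $\ulambda$ is supported on $X-1$ alone, and the final degree comparison, also mirror the paper.

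The one genuine difference is in how you show $d_{\ulambda_0}=0$ when $\ulambda(1)=\lambda=(n-t,\nu)$ with $\nu\ne(t)$. The paper computes
\[
\sum_{\mu_1>n-t}H_{\mu\lambda}\;=\;\sum_{\mu}\bigl(K_{(n),\mu}-K_{(n-t,t),\mu}\bigr)H_{\mu\lambda}\;=\;\delta_{(n),\lambda}-\delta_{(n-t,t),\lambda}\;=\;0,
\]
using that $K_{(n-t,t),\mu}=\1_{\mu_1=n-t}$ on the relevant range and the identity~\eqref{eqn:HK_inverse}. You instead evaluate the complementary piece $\sum_{\mu_1=n-t}H_{\mu\lambda}$ via the stabilisation $H_{(n-t,\mu'),(n-t,\nu)}=H_{\mu'\nu}$ and then $\sum_{\mu'}H_{\mu'\nu}=\delta_{(t),\nu}$. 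Both routes are valid; note however that what you invoke is stabilisation of \emph{inverse} Kostka numbers, not of Kostka numbers. This does follow from $K_{(n-t,\kappa'),(n-t,\mu')}=K_{\kappa'\mu'}$ (first row forced to be all $1$'s) together with the observation that in $\sum_{\rho}K_{\kappa\rho}H_{\rho\lambda}=\delta_{\kappa\lambda}$ the sum, for $\kappa_1=\lambda_1=n-t$, is supported on $\rho_1=n-t$; but you should say so. Your identity $\sum_{\mu'}H_{\mu'\nu}=\delta_{(t),\nu}$ is more directly just $\sum_{\mu'}K_{(t),\mu'}H_{\mu'\nu}=\delta_{(t),\nu}$, without the detour through uniqueness of solutions of $x^{\mathsf T}K=\mathbf 1^{\mathsf T}$. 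For the final estimate you appeal to the hook-length formula; the paper packages the same computation as Lemma~\ref{lem:estimate_degree} (applied with $t-1$ in place of $t$), which gives $\chi^{\ulambda}(1)\ge\delta_{t-1}q^{nt}$ and hence $\chi^{\ulambda}(1)/\qbin{n}{t-1}\to\infty$ immediately.
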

\begin{proof}
By slight abuse of notation, we view $w$ as an element of $\RR(G_n)$ by setting $w(x)=0$ if $x\not\in\Omega_n\cap\Sigma_{\le t-1}$ and $w(x)=w(\usigma)$ if $x\in\Omega_n\cap\Sigma_{\le t-1}$ and $x\in D_{\usigma}$. Recalling the scalar product on class functions of $G_n$ from~\eqref{eqn:def_sclara_product}, the statement of the lemma is equivalent to
\begin{equation}
\frac{\abs{G_n}}{\psi^{\ulambda}(1)}\;\abs{\ang{w,\psi^{\ulambda}}}<\abs{\varepsilon}   \label{eqn:scalar_prod_reformulated}
\end{equation}
for all $\ulambda\in \Omega_n\cap \Pi_t$ with $\ulambda(1)\ne (n-t,t)$, provided that $n$ is sufficiently large compared to $t$.
\par
Pick $\ulambda\in\Omega_n\cap \Pi_t$ such that $\ulambda(1)\ne(n-t,t)$. Then $\ulambda(\alpha^i)_1=n-t$ for some $i$. First assume that $\abs{\ulambda(1)}\ne n$. Denoting by $\re x$ the real part of a complex number $x$, we find from Lemma~\ref{lem:conjugates} and~\eqref{eqn:uchi_from_uxi} that
\[
\tfrac12\abs{\ang{w,\psi^{\ulambda}}}\le \abs{\re\;\ang{w,\chi^{\ulambda}}}=\biggabs{\sum_{\umu\sim\ulambda}H_{\umu\ulambda}\,\re\;\ang{w,\xi^{\umu}}}.
\]
Lemma~\ref{lem:xi_reduction} implies that $\xi^{\umu}_{\usigma}=0$ for each $\umu\not\in\Pi_{\le t-1}$ and each $\usigma\in\Sigma_{\le t-1}$. For $\umu\in\Lambda_n$, we have
\begin{align*}
\re\;\ang{w,\xi^{\umu}}&=\sum_{\ukappa\sim\umu}K_{\ukappa\umu}\,\re\;\ang{w,\chi^{\ukappa}}.
\end{align*}
By~\eqref{eqn:Kostka_triangular}, the summation can be taken over all $\ukappa$ such that $\ukappa(\alpha^i)\unrhd\umu(\alpha^i)$. Hence if $\mu\in\Pi_{\le t-1}$, then $\ukappa\in\Pi_{\le t-1}$. By the assumed properties of $w$ given in Proposition~\ref{pro:equation_system_space}, we have $\ang{w,\psi^{\ukappa}}=0$ for each $\ukappa\in\Omega_n\cap\Pi_{\le t-1}$ satisfying $\abs{\ukappa(1)}\ne n$. Since $\abs{\ulambda(1)}\ne n$ we conclude that $\ang{w,\psi^{\ulambda}}=0$.
\par
Now assume that $\abs{\ulambda(1)}=n$ and write $\ulambda(1)=\lambda$. 
From~\eqref{eqn:chi_xi} and~\eqref{eqn:Kostka_inv_triangular} we have
\[
\ang{w,\psi^{X-1\mapsto\lambda}}=\sums{\mu\unrhd\lambda\\\mu_1>n-t}H_{\mu\lambda}\,\ang{w,\xi^{X-1\mapsto\mu}},
\]
since by Lemma~\ref{lem:xi_reduction} in the case $\mu_1=n-t$ we have $\xi^{X-1\mapsto\mu}_{\usigma}=0$ for each $\usigma\in\Sigma_{\le t-1}$.
From~\eqref{eqn:xi_chi} and~\eqref{eqn:Kostka_triangular} we then find that
\begin{align}
\ang{w,\psi^{X-1\mapsto\lambda}}&=\sums{\mu\unrhd\lambda\\\mu_1>n-t}H_{\mu\lambda}\,\sum_{\kappa\unrhd\mu}K_{\kappa\mu}\,\ang{w,\psi^{X-1\mapsto\kappa}}\nonumber   \\
&=\frac{1}{\abs{G_n}}\sums{\mu\unrhd\lambda\\\mu_1>n-t}H_{\mu\lambda}+\sums{\mu\unrhd\lambda\\\mu_1>n-t}H_{\mu\lambda}\sum_{(n)\rhd\kappa\unrhd\mu}K_{\kappa\mu}\,\ang{w,\psi^{X-1\mapsto\kappa}},   \label{eqn:scalar_prod_H_K}
\end{align}
using that $\abs{G_n}\,\ang{w,\psi^{X-1\mapsto (n)}}=1$ by the assumed properties of $w$ given in Proposition~\ref{pro:equation_system_space} and $K_{(n)\mu}=1$ for each partition $\mu$ of $n$. We first show that the first sum is zero. We have
\begin{equation}
\sums{\mu\unrhd\lambda\\\mu_1>n-t}H_{\mu\lambda}=\sums{\mu\unrhd\lambda}K_{(n)\mu}H_{\mu\lambda}-\sums{\mu\unrhd\lambda}K_{(n-t,t)\mu}H_{\mu\lambda},   \label{eqn:sum_H}
\end{equation}
using that $\lambda_1=n-t$ and that, for each partition $\mu$ of $n$, we have
\[
K_{(n-t,t)\mu}=\begin{cases}
1 & \text{for $\mu_1=n-t$}\\
0 & \text{for $\mu_1>n-t$}.
\end{cases}
\]
It is readily verified that
\begin{equation}
\sum_{\mu\unrhd\lambda} K_{\kappa\mu}H_{\mu\lambda}=\delta_{\kappa\lambda}.   \label{eqn:HK_inverse}
\end{equation}
Since $\lambda$ is neither $(n)$ nor $(n-t,t)$, we conclude that~\eqref{eqn:sum_H} equals zero. Hence~\eqref{eqn:scalar_prod_H_K} becomes
\begin{equation}
\ang{w,\psi^{X-1\mapsto\lambda}}=\sums{\mu\unrhd\lambda\\\mu_1>n-t}H_{\mu\lambda}\sum_{(n)\rhd\kappa\unrhd\mu}K_{\kappa\mu}\,\ang{w,\psi^{X-1\mapsto\kappa}}   \label{eqn:scalar_prod_H_K_2}.
\end{equation}
By the assumed properties of $w$ given in Proposition~\ref{pro:equation_system_space}, the inner summand is nonzero only when $\kappa=(n-s,s)$ for some $s$ satisfying $1\le s\le t-1$. In particular, for $\kappa$ of this form, Proposition~\ref{pro:equation_system_space} and~\eqref{eqn:deg_chi_char_spaces} give
\[
\abs{G_n}\,\abs{\ang{w,\psi^{X-1\mapsto \kappa}}}=\frac{\qbin{n}{s}-\qbin{n}{s-1}}{\qbin{n}{t}-1}\le \frac{\qbin{n}{t-1}}{\qbin{n}{t}}=\frac{q^t-1}{q^{n-t+1}-1}\le \frac{q^{2t-1}}{q^n}.
\]
By Lemma~\ref{lem:T_full_rank} the Kostka numbers $K_{\kappa\mu}$ occuring in~\eqref{eqn:scalar_prod_H_K_2} are independent of $n$ and it is readily verified from~\eqref{eqn:HK_inverse} that the numbers $H_{\mu\lambda}$ occuring in~\eqref{eqn:scalar_prod_H_K_2} are also independent of $n$. Moreover the number of summands in~\eqref{eqn:scalar_prod_H_K_2} is also independent of~$n$. From Lemma~\ref{lem:estimate_degree}, to be stated and proved in Section~\ref{sec:estimates}, we have $\psi^{X-1\mapsto\lambda}(1)\ge \delta_{t-1}\,q^{nt}$ for some constant $\delta_{t-1}$ only depending on $t$. Hence there is a constant $c_t$, depending only on $t$, such that
\[
\frac{\abs{G_n}}{\psi^{X-1\mapsto\lambda}(1)}\,\abs{\ang{w,\psi^{X-1\mapsto\lambda}}}\le \frac{c_t}{q^{n(t+1)}}.
\]
Since $\abs{\varepsilon}>1/q^{nt}$, this shows that~\eqref{eqn:scalar_prod_reformulated} holds provided that $n$ is sufficiently large compared to $t$.
\end{proof}
\par
Recall that $V_{\ulambda}$ is the column span of $E_{\ulambda}$. Define
\[
W_t=\sums{\ulambda\in\Omega_n\\\ulambda(1)\unrhd (n-t,t)} V_{\ulambda}.
\]
Now we obtain the following.
\begin{theorem}
Let $t$ be a positive integer. Then, for all sufficiently large $n$, the following holds.
\begin{enumerate}[(i)]
\item Every $t$-space-intersecting set $Y$ in $ G_n$ satisfies
\[
\abs{Y}\le \Bigg[\prod_{i=0}^{t-1}(q^t-q^i)\Bigg]\Bigg[\prod_{i=t}^{n-1}(q^n-q^i)\Bigg]
\]
and, in case of equality, we have $1_Y\in W_t$.

\item Every pair of $t$-space-cross-intersecting sets $Y,Z$ in $ G_n$ satisfies
\[
\sqrt{\abs{Y}\cdot\abs{Z}}\le \Bigg[\prod_{i=0}^{t-1}(q^t-q^i)\Bigg]\Bigg[\prod_{i=t}^{n-1}(q^n-q^i)\Bigg]
\]
and, in case of equality, we have $1_Y,1_Z\in W_t$.
\end{enumerate}
\end{theorem}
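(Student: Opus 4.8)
The plan is to mirror, almost verbatim, the proof of the corresponding statement for $t$-intersecting sets at the end of Section~\ref{sec:proofs_points}, replacing $\Sigma_{\le t}$, $\Pi_{\le t}$ and $\eta$ throughout by $\Sigma_{\le t-1}$, $\Pi_{\le t-1}$ and $\varepsilon$. First I would set up the graph: let $\Gamma$ be the graph on $G_n$ with adjacency matrix $\sum_{\usigma\in\Omega_n\cap\Sigma_{\le t-1}}A_{\usigma}$, together with the $\abs{D_{\usigma}}$-regular spanning subgraphs $\Gamma_{\usigma}$ having adjacency matrix $A_{\usigma}$. As observed in Section~\ref{sec:matrix}, every $C_{\usigma}$ with $\usigma\in\Sigma_{\le t-1}$ consists of elements fixing no $t$-space of $\FF_q^n$, and since $h_{\ell,j}^*=h_{\ell,-j}$ the same holds for $D_{\usigma}=C_{\usigma}\cup C_{\usigma^*}$. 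Hence if $x,y\in G_n$ are $t$-space-intersecting, then $x^{-1}y$ fixes a $t$-space, so $x^{-1}y$ lies in no $D_{\usigma}$ with $\usigma\in\Omega_n\cap\Sigma_{\le t-1}$ and $\{x,y\}$ is not an edge of $\Gamma$. Thus a $t$-space-intersecting set is an independent set in $\Gamma$ and a $t$-space-cross-intersecting pair has no $\Gamma$-edges between its members.

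Next I would assemble the eigenvalues. Take $w\in\RR(\Omega_n\cap\Sigma_{\le t-1})$ from Proposition~\ref{pro:equation_system_space}, view it (extended by zero) as an element of $\RR(\Omega_n\cap\Sigma_{\le t})$, for which the bound~\eqref{eqn:EstimationOfOmega} still holds, and set $P(\ulambda)=\sum_{\usigma\in\Omega_n\cap\Sigma_{\le t-1}}w(\usigma)P(\ulambda,\usigma)$. For $n>2t$ the three sets $\Pi_{\le t-1}$, $\Pi_t$ and $\Omega_n\setminus\Pi_{\le t}$ partition $\Omega_n$, and I would read off $P$ on each block. Proposition~\ref{pro:equation_system_space} gives $P(\ulambda)=1$ for $\ulambda(1)=(n)$, $P(\ulambda)=\varepsilon$ for $\ulambda(1)=(n-s,s)$ with $1\le s\le t$, and $P(\ulambda)=0$ for all other $\ulambda\in\Omega_n\cap\Pi_{\le t-1}$; Lemma~\ref{lem:remaining_eigenvalues_space_t} gives $\abs{P(\ulambda)}<\abs{\varepsilon}$ for all other $\ulambda\in\Omega_n\cap\Pi_t$; and Lemma~\ref{lem:remaining_eigenvalues} (applied to the zero-extended $w$) gives $\abs{P(\ulambda)}<\abs{\eta}\le\abs{\varepsilon}$ for $\ulambda\in\Omega_n\setminus\Pi_{\le t}$, the inequality $\abs{\eta}\le\abs{\varepsilon}$ holding because $\qbin{n}{t}$ divides $(q^n-1)(q^n-q)\cdots(q^n-q^{t-1})$. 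Writing $\ulambda_0$ for $X-1\mapsto(n)$ and $\ulambda_s$ for $X-1\mapsto(n-s,s)$, this shows $P(\ulambda_0)=1$, $\min_{\ulambda\ne\ulambda_0}P(\ulambda)=\varepsilon$ and $\max_{\ulambda\ne\ulambda_0}\abs{P(\ulambda)}=\abs{\varepsilon}$, and that $P(\ulambda)=\varepsilon$ holds exactly for $\ulambda\in\{\ulambda_1,\dots,\ulambda_t\}$.

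Finally I would apply Proposition~\ref{pro:Hoffman} with these weights, using the orthogonal decomposition $\RR(G_n)=\bigoplus_{\ulambda\in\Omega_n}V_{\ulambda}$ from Lemma~\ref{lem:decomposition_RX}. Part~(i) yields $\abs{Y}/\abs{G_n}\le\abs{\varepsilon}/(1+\abs{\varepsilon})=1/\qbin{n}{t}$, and a short $q$-binomial computation turns $\abs{G_n}/\qbin{n}{t}$ into $\bigl[\prod_{i=0}^{t-1}(q^t-q^i)\bigr]\bigl[\prod_{i=t}^{n-1}(q^n-q^i)\bigr]$; part~(ii) gives the cross-intersecting bound in the same way. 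In the equality cases, Proposition~\ref{pro:Hoffman} places $1_Y$ (and $1_Z$) in the span of $v_0$ together with the eigenvectors of eigenvalue $\varepsilon$ (respectively of absolute value $\abs{\varepsilon}$), that is in $V_{\ulambda_0}\oplus\bigoplus_{1\le s\le t}V_{\ulambda_s}$; since a partition $\mu$ of $n$ dominates $(n-t,t)$ precisely when $\mu=(n-s,s)$ for some $0\le s\le t$, this subspace is exactly $W_t$.

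I expect the work to be bookkeeping rather than a genuine obstacle, concentrated in the second paragraph: one must check that $\Pi_{\le t-1}$, $\Pi_t$ and $\Omega_n\setminus\Pi_{\le t}$ really do partition $\Omega_n$ once $n>2t$, that Lemma~\ref{lem:remaining_eigenvalues} — whose conclusion is stated in terms of $\abs{\eta}$ — still suffices after the passage from $\eta$ to $\varepsilon$, and that Proposition~\ref{pro:equation_system_space} together with Lemma~\ref{lem:remaining_eigenvalues_space_t} between them account for every $\ulambda\in\Omega_n\cap\Pi_t$. Everything else is a transcription of the argument already carried out for $t$-intersecting sets, and the identification of the span in the equality case reduces to the elementary fact about the dominance order just quoted.
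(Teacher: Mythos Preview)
Your proposal is correct and follows essentially the same route as the paper's own proof: build the graph from $\sum_{\usigma\in\Omega_n\cap\Sigma_{\le t-1}}A_{\usigma}$, take $w$ from Proposition~\ref{pro:equation_system_space}, control $P(\ulambda)$ on $\Pi_{\le t-1}$, $\Pi_t$, and $\Omega_n\setminus\Pi_{\le t}$ via Proposition~\ref{pro:equation_system_space}, Lemma~\ref{lem:remaining_eigenvalues_space_t}, and Lemma~\ref{lem:remaining_eigenvalues} respectively, and finish with Proposition~\ref{pro:Hoffman} and Lemma~\ref{lem:decomposition_RX}. The extra details you supply---the zero-extension of $w$ so that Lemma~\ref{lem:remaining_eigenvalues} applies verbatim, the inequality $\abs{\eta}\le\abs{\varepsilon}$, the computation $\abs{G_n}/\qbin{n}{t}=\bigl[\prod_{i=0}^{t-1}(q^t-q^i)\bigr]\bigl[\prod_{i=t}^{n-1}(q^n-q^i)\bigr]$, and the identification of $W_t$ via the dominance-order fact that $\mu\unrhd(n-t,t)$ forces $\mu=(n-s,s)$---are all correct and merely make explicit what the paper leaves to the reader.
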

\begin{proof}
We apply Proposition~\ref{pro:Hoffman} to the graph with adjacency matrix 
\[
\sum_{\usigma\in\Omega_n\cap\Sigma_{\le t-1}}A_{\usigma}
\]
and the $\abs{D_{\usigma}}$-regular spanning subgraphs with adjacency matrix $A_{\usigma}$ for those $\usigma$ occuring in the above set union. Every $t$-space-intersecting set in $G_n$ is an independent set in this graph. Let $w\in\RR(\Omega_n\cap\Sigma_{\le t-1})$ be given by Proposition~\ref{pro:equation_system_space} and write
\[
P(\ulambda)=\sums{\usigma\in\Omega_n\cap\Sigma_{\le t-1}}w(\usigma)P(\ulambda,\usigma).
\]
Proposition~\ref{pro:equation_system_space} and Lemmas~\ref{lem:remaining_eigenvalues} and~\ref{lem:remaining_eigenvalues_space_t} imply that, for all sufficiently large $n$, we have
\[
P(\ulambda)=
\begin{cases}
1                 & \text{for $\ulambda(1)=(n)$}\\
\varepsilon & \text{for $\ulambda(1)=(n-s,s)$ with $1\le s\le t$}\\
\end{cases}
\]
and $\abs{P(\ulambda)}<\abs{\varepsilon}$ for $\ulambda(1)\ne (n-s,s)$ with some $s$ satisfying $0\le s\le t$. Hence, writing~$\ulambda_0$ for $X-1\mapsto (n)$, we have $P(\ulambda_0)=1$ and
\[
\varepsilon=\min_{\ulambda\ne \ulambda_0}P(\ulambda)\quad\text{and}\quad\abs{\varepsilon}=\max_{\ulambda\ne \ulambda_0}\,\abs{P(\ulambda)}.
\]
Then the required result follows from Proposition~\ref{pro:Hoffman} and the decomposition of $\RR(G_n)$ given in Lemma~\ref{lem:decomposition_RX}.
\end{proof}
\par
Our proof of Theorems~\ref{thm:main_intersection_space} and~\ref{thm:main_cross_intersection_space} is completed by the following result.
\begin{theorem}
$W_t$ is spanned by the characteristic vectors of cosets of stabilisers of $t$-spaces.
\end{theorem}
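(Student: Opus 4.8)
The plan is to run the argument of Theorem~\ref{thm:Ut_span} verbatim, with $t$-cosets replaced by cosets of stabilisers of $t$-spaces and with the permutation character $\zeta^t$ replaced by $\xi^{X-1\mapsto(n-t,t)}$, which, as noted at the beginning of this section, is the permutation character of $G_n$ on the set $\mathcal{S}_t$ of $t$-spaces of $\FF_q^n$.

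First I would introduce the incidence matrix $N_t\in\CC(G_n,\mathcal{S}_t\times\mathcal{S}_t)$ given by
\[
N_t(x,(U,W))=\begin{cases}1&\text{for $xU=W$}\\0&\text{otherwise}.\end{cases}
\]
Since $G_n$ acts transitively on $\mathcal{S}_t$, the cosets of stabilisers of $t$-spaces are exactly the sets $\{g\in G_n:gU=W\}$ for $U,W\in\mathcal{S}_t$, so the columns of $N_t$ are precisely the characteristic vectors of these cosets. Exactly as in the proof of Theorem~\ref{thm:Ut_span}, one computes
\[
(N_tN_t^T)(x,y)=\sum_{U,W\in\mathcal{S}_t}\1_{xU=W}\,\1_{yU=W}=\sum_{U\in\mathcal{S}_t}\1_{x^{-1}yU=U}=\xi^{X-1\mapsto(n-t,t)}(x^{-1}y),
\]
so, writing $C'_t(x,y)=\xi^{X-1\mapsto(n-t,t)}(x^{-1}y)$, we get $C'_t=N_tN_t^T$, whence the column span of $C'_t$ equals that of $N_t$, namely the span of the characteristic vectors of cosets of stabilisers of $t$-spaces.

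It then remains to identify this column span with $W_t$. Using the decomposition~\eqref{eqn:DecompositionPermutationChar}, the identity $\psi^{X-1\mapsto\lambda}=\chi^{X-1\mapsto\lambda}$, and the formula $E_{\ulambda}(x,y)=\frac{\chi^{\ulambda}(1)}{\abs{G_n}}\,\psi^{\ulambda}(x^{-1}y)$ established in the proof of Theorem~\ref{thm:Ut_span}, I would obtain
\[
C'_t=\abs{G_n}\sum_{s=0}^t\frac{1}{\chi^{X-1\mapsto(n-s,s)}(1)}\,E_{X-1\mapsto(n-s,s)}.
\]
Since the partitions of $n$ dominating $(n-t,t)$ are exactly $(n),(n-1,1),\dots,(n-t,t)$, and since $\ulambda(1)\unrhd(n-t,t)$ forces $\ulambda$ to be supported on $X-1$, the idempotents occurring here are precisely the $E_{\ulambda}$ with $\ulambda\in\Omega_n$ and $\ulambda(1)\unrhd(n-t,t)$; hence the column span of $C'_t$ is contained in $W_t$. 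For the reverse inclusion, if $v$ is a column of $E_{\ukappa}$ with $\ukappa(1)\unrhd(n-t,t)$, then by idempotency and pairwise orthogonality of the $E_{\ulambda}$ (Lemma~\ref{lem:decomposition_RX}) we have $C'_tv=\abs{G_n}\,\chi^{\ukappa}(1)^{-1}v$, a nonzero multiple of $v$, so $v$ lies in the column span of $C'_t$. This shows that $W_t$ equals the column span of $C'_t$ and completes the proof.

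I do not expect a genuine obstacle here: in contrast to the estimates that drive Sections~\ref{sec:proofs_points} and~\ref{sec:proofs_spaces}, this statement is purely formal once the apparatus of Theorem~\ref{thm:Ut_span} is in place. The only thing that needs checking is that every coefficient in the decomposition of $\xi^{X-1\mapsto(n-t,t)}$ is nonzero, so that each relevant $E_{\ulambda}$ is recoverable from $C'_t$, and this is immediate from~\eqref{eqn:DecompositionPermutationChar}, where all coefficients equal $1$.
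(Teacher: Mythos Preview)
Your proposal is correct and follows exactly the approach the paper takes: the paper's own proof simply says to rerun the argument of Theorem~\ref{thm:Ut_span} with $\Ac_t$ replaced by the set of $t$-spaces, $\zeta^t$ replaced by the permutation character $\xi^{X-1\mapsto(n-t,t)}$, and the decomposition of $\zeta^t$ replaced by~\eqref{eqn:DecompositionPermutationChar}. Your write-up fills in precisely these details, including the observation that $\ulambda(1)\unrhd(n-t,t)$ forces $\abs{\ulambda(1)}=n$ and hence $\ulambda(1)=(n-s,s)$ for some $0\le s\le t$, and that all multiplicities in~\eqref{eqn:DecompositionPermutationChar} are nonzero.
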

\begin{proof}
The proof is almost identical to that of Theorem~\ref{thm:Ut_span} with $\Ac_t$ replaced by the set of $t$-spaces and $\zeta^t$ replaced by the permutation character $\xi^{X-1\mapsto(n-t,t)}$ of $t$-spaces and the decomposition of $\zeta^t$ replaced by the decomposition given in~\eqref{eqn:DecompositionPermutationChar}. 
\end{proof}


\section{Estimates on conjugacy class sizes and character degrees}
\label{sec:estimates}

In this section we provide bounds on the size of certain conjugacy classes and degrees of certain irreducible characters of $ G_n$, which are used in the proof of Lemma~\ref{lem:remaining_eigenvalues}.
\begin{lemma} 
\label{lem:estimation_cc}
Let $n$ and $t$ be positive integers satisfying $n>2t$ and let $\usigma\in\Sigma_{\le t}$. Then we have
\[
\frac{\abs{ G_n}}{\abs{C_{\usigma}}}\le q^{t^5} q^n.
\]
\end{lemma}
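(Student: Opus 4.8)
The plan is to apply the explicit formula of Lemma~\ref{lem:sizes_cc} for $\abs{G_n}/\abs{C_{\usigma}}$ and to exploit that the product over $f\in\Phi$ factors through the distinguished polynomial carrying almost all of the norm of $\usigma$.

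First I would record that $\usigma\in\Sigma_{\le t}$ means $\usigma\in\Sigma_{\ell,j}$ for some $\ell$ with $0\le\ell\le t$ and some $j$, so that $\usigma(h_{\ell,j})=(1)$ with $\abs{h_{\ell,j}}=n-\ell$. Since $\norm{\usigma}=n$ and $n>2t\ge 2\ell$, the restriction $\utau$ of $\usigma$ to $\Phi\setminus\{h_{\ell,j}\}$ satisfies $\norm{\utau}=\ell$ and is supported on polynomials of degree at most $\ell<n-\ell=\abs{h_{\ell,j}}$; in particular $\utau$ is a genuine element of $\Lambda_\ell$ and hence indexes a conjugacy class $C_{\utau}$ of $G_\ell$.

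Because the product in Lemma~\ref{lem:sizes_cc} is multiplicative over $f\in\Phi$, I would split it into the factor coming from $f=h_{\ell,j}$ and the product over the remaining $f$. For $f=h_{\ell,j}$ with $\usigma(f)=(1)$ a direct computation (using $(1)'=(1)$, so that $m_1((1))=1$ and $s_1((1))=1$) shows that this factor equals $q^{n-\ell}\bigl(1-q^{-(n-\ell)}\bigr)=q^{n-\ell}-1\le q^n$. The product over all $f\ne h_{\ell,j}$ is, by Lemma~\ref{lem:sizes_cc} applied to $G_\ell$, precisely $\abs{G_\ell}/\abs{C_{\utau}}$, and since $\abs{C_{\utau}}\ge 1$ this is at most $\abs{G_\ell}=\prod_{i=0}^{\ell-1}(q^\ell-q^i)<q^{\ell^2}\le q^{t^2}\le q^{t^5}$.

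Multiplying the two estimates yields $\abs{G_n}/\abs{C_{\usigma}}\le q^{t^5}q^n$, as claimed. There is no genuine obstacle here; the only points requiring care are the multiplicativity of the formula in Lemma~\ref{lem:sizes_cc}, the evaluation of the factor attached to $h_{\ell,j}$, and the observation (this is where $n>2t$ enters) that the leftover partition‑valued map legitimately lies in $\Lambda_\ell$, so that its contribution is bounded by $\abs{G_\ell}$.
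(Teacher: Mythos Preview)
Your proof is correct and follows the same overall strategy as the paper: apply Lemma~\ref{lem:sizes_cc}, split off the factor coming from the distinguished polynomial $h_{\ell,j}$ (which contributes at most $q^n$), and bound the remaining product. The one notable difference is in how you handle the leftover factors: the paper bounds each of the at most $t$ remaining factors crudely by $q^{t^4}$ to reach $q^{t^5}$, whereas you observe that the remaining product is exactly the centraliser order $\abs{G_\ell}/\abs{C_{\utau}}\le\abs{G_\ell}<q^{\ell^2}\le q^{t^2}$, which is both cleaner and sharper (though you then relax it to $q^{t^5}$ to match the stated bound).
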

\begin{proof}
From Lemma~\ref{lem:sizes_cc} we find that (with the same notation as in Lemma~\ref{lem:sizes_cc})
\begin{equation}
\frac{\abs{ G_n}}{\abs{C_{\usigma}}}\le\prod_{f\in\Phi} \prod_{i=1}^{\abs{\usigma(f)}} q^{\abs{f}\, s_i(\usigma(f)')\,m_i(\usigma(f))}.   \label{eqn:upper_bound_cc}
\end{equation}
Since $\usigma\in\Sigma_{\le t}$ and $t<n/2$, there is exactly one polynomial $h\in\Phi$ of degree at least $n-t$ in the support of $\usigma$. This polynomial must satisfy $\usigma(h)=(1)$ and the corresponding factor in~\eqref{eqn:upper_bound_cc} is at most $q^n$. There are at most $t$ other polynomials in the support of $\usigma$. Each such polynomial $f$ has degree at most $t$ and satisfies $\abs{\usigma(f)}\le t$ and hence the corresponding factor in~\eqref{eqn:upper_bound_cc} has a crude upper bound of $q^{t^4}$. As there are at most $t$ such factors, the proof is completed.
\end{proof}
\par
\begin{lemma}
\label{lem:estimate_degree}
Let $t$ be a positive integer. Then there is a constant $\delta_t$ such that, for all sufficiently large $n$ and for all $\ulambda\in\Lambda_n\setminus \Pi_{\le t}$, we have
\[
\chi^{\ulambda}(1)\ge \delta_t\,q^{n(t+1)}.
\]
\end{lemma}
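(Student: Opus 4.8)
The plan is to feed the $q$-analog of the hook-length formula (Lemma~\ref{lem:degrees_characters}) into a combinatorial optimisation. First I would rearrange Lemma~\ref{lem:degrees_characters} into
\[
\chi^{\ulambda}(1)=\Bigg(\prod_{f\in\Phi}q^{\abs{f}\,b(\ulambda(f))}\Bigg)\;\frac{\prod_{i=1}^n(q^i-1)}{\prod_{f\in\Phi}\prod_{(i,j)\in\ulambda(f)}\big(q^{\abs{f}\,h_{i,j}(\ulambda(f))}-1\big)}.
\]
Recall the elementary identities $\sum_{(i,j)\in\mu}h_{i,j}(\mu)=b(\mu)+b(\mu')+\abs{\mu}$ and $b(\mu')=\sum_{i\ge 1}\binom{\mu_i}{2}$ valid for any partition $\mu$. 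Bounding $q^m-1\le q^m$ in every factor of the denominator and using $\prod_{i=1}^n(q^i-1)\ge c_q\,q^{\binom{n+1}{2}}$ with $c_q:=\prod_{i\ge 1}(1-q^{-i})>0$, these identities give
\[
\chi^{\ulambda}(1)\;\ge\; c_q\,q^{\binom{n}{2}-\sum_{f\in\Phi}\abs{f}\,b(\ulambda(f)')}.
\]
Since $\binom{n}{2}-\binom{n-t-1}{2}-\binom{t+1}{2}=(t+1)(n-t-1)=n(t+1)-(t+1)^2$, it therefore suffices to prove that $\sum_{f\in\Phi}\abs{f}\,b(\ulambda(f)')\le\binom{n-t-1}{2}+\binom{t+1}{2}$ for all $\ulambda\in\Lambda_n\setminus\Pi_{\le t}$ and all large $n$, and then take $\delta_t:=c_q\,q^{-(t+1)^2}$.

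For this extremal bound, note first that, since $\{X-\alpha^i:0\le i\le q-2\}$ is exactly the set of linear polynomials in $\Phi$ and $\norm{\ulambda}=n$ forces $\ulambda(X-\alpha^i)_1\le n$, the condition $\ulambda\notin\Pi_{\le t}$ is precisely that $\ulambda(f)_1\le n-t-1$ for every linear $f\in\Phi$. Writing out $b(\ulambda(f)')=\sum_i\binom{\ulambda(f)_i}{2}$, the target quantity $\sum_f\abs{f}\,b(\ulambda(f)')$ becomes $\sum_s d_s\binom{\ell_s}{2}$, where $s$ ranges over the rows of all the Young diagrams $\ulambda(f)$, the row $s$ belonging to $\ulambda(f)$ carries weight $d_s=\abs{f}$ and length $\ell_s$, we have $\sum_s d_s\ell_s=\norm{\ulambda}=n$, and every row with $d_s=1$ satisfies $\ell_s\le n-t-1$. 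Maximising $\sum_s d_s\binom{\ell_s}{2}$ subject to these constraints, I would argue (for $n$ large compared to $t$) that at an optimum: (i) we may assume all $d_s=1$, since a row $(d,\ell)$ with $d\ge 2$ has $\ell\le n/d\le n/2\le n-t-1$ and can be split into $d$ copies of $(1,\ell)$ without changing either $\sum_s d_s\ell_s$ or $\sum_s d_s\binom{\ell_s}{2}$; (ii) no two rows have combined length $\le n-t-1$, since merging two such rows would strictly increase $\sum_s\binom{\ell_s}{2}$ by convexity of $\binom{\cdot}{2}$; (iii) consequently there are at most two rows, because with three or more rows the two shortest already have combined length exceeding $n-t-1$, forcing $2n\ge 3(n-t-1)$, impossible for large $n$. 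A single row is infeasible (its length would be $n>n-t-1$), so there are exactly two rows of lengths $\ell_1+\ell_2=n$, each at most $n-t-1$; convexity of $\binom{\cdot}{2}$ then maximises $\binom{\ell_1}{2}+\binom{\ell_2}{2}$ at $\{\ell_1,\ell_2\}=\{n-t-1,t+1\}$, giving precisely the claimed bound.

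I expect the one genuinely delicate point to be this extremal argument: one must check that the reduction moves (splitting high-weight rows, merging short rows) keep the configuration feasible, which is exactly where the hypothesis that $n$ be large relative to $t$ enters, and one must organise the reduction so that it provably terminates at a two-row configuration. Everything else — the manipulation of the hook-length formula, the hook-sum identity, and the final arithmetic — is routine.
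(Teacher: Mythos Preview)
Your argument is correct, and it is genuinely different from --- and cleaner than --- the paper's own proof. Both proofs start from the hook-length formula and the estimate $\prod_{i=1}^n(q^i-1)\ge c\,q^{\binom{n+1}{2}}$, but diverge immediately after. The paper keeps $N(\ulambda)=\sum_f\abs{f}\sum_{(i,j)}h_{i,j}(\ulambda(f))$ and $M(\ulambda)=\sum_f\abs{f}\,b(\ulambda(f))$ separate, bounds $N(\ulambda)$ by the crude inequality $\sum_{(i,j)}h_{i,j}(\lambda)\le\binom{\abs{\lambda}+1}{2}$, and is then forced into a lengthy case analysis (the cases $\ulambda(h)'_1\ge n-t$; $\abs{\ulambda(f)}\le n-t-1$ for all linear $f$, itself split according to $\max_f\abs{\ulambda(f)}\lessgtr n/2$; and the residual case $\abs{\ulambda(h)}\ge n-t$ with $\ulambda(h)_1,\ulambda(h)'_1\le n-t-1$, which requires a delicate hook computation on the first row of $\ulambda(h)$). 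You instead invoke the exact identity $\sum_{(i,j)}h_{i,j}(\mu)=b(\mu)+b(\mu')+\abs{\mu}$, which collapses $N(\ulambda)-M(\ulambda)$ to $n+\sum_f\abs{f}\,b(\ulambda(f)')=n+\sum_s d_s\binom{\ell_s}{2}$ and reduces everything to a single convex optimisation over weighted row lengths. Your split/merge reduction is sound: splitting preserves the objective and is feasible once $n\ge 2t+2$, merging strictly increases the objective, the merge process terminates since it decreases the number of rows, and the three-row obstruction you note ($2n\ge 3(n-t)$) indeed fails for $n>3t$. The payoff is a uniform argument with an explicit constant $\delta_t=c_q\,q^{-(t+1)^2}$, whereas the paper's case analysis yields several different exponents and a less transparent constant.
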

\begin{proof}
Let $\ulambda\in\Lambda_n\setminus \Pi_{\le t}$. Using elementary calculus we find that
\[
1-x\ge 4^{-x}\quad\text{for $0\le x\le 1/2$}
\]
and therefore
\[
\frac{\prod_{i=1}^n(q^i-1)}{q^{\frac{1}{2}n(n+1)}}=\prod_{i=1}^n\bigg(1-\frac{1}{q^i}\bigg)\ge \prod_{i=1}^n\bigg(1-\frac{1}{2^i}\bigg)\ge \prod_{i=1}^n4^{-1/2^i}\ge\prod_{i=1}^\infty4^{-1/2^i}=\frac{1}{4}.
\]
Substitute into~\eqref{eqn:hook_length_formula} of Lemma~\ref{lem:degrees_characters} to give
\begin{equation}
\frac{1}{\chi^{\ulambda}(1)}\le 4q^{N(\ulambda)-M(\ulambda)-\frac{1}{2}n(n+1)},   \label{eqn:bound_f_n_M}
\end{equation}
where
\begin{align*}
N(\ulambda)&=\sum_{f\in\Phi}\abs{f}\sum_{(i,j)\in\ulambda(f)}h_{i,j}(\ulambda(f)),\\
M(\ulambda)&=\sum_{f\in\Phi}\abs{f}\,b(\ulambda(f))
\end{align*}
and $b$ and $h_{i,j}$ are as in  Lemma~\ref{lem:degrees_characters}. Note that for each partition $\lambda$, we have
\begin{equation}
\sum_{(i,j)\in\lambda}h_{i,j}(\lambda)\le \sum_{k=1}^{\abs{\lambda}}k=\frac{1}{2}\abs{\lambda}(\abs{\lambda}+1).   \label{eqn:bound_sum_of_hook_lengths}
\end{equation}
First assume that there exists a polynomial $h\in\Phi$ such that $\abs{h}=1$ and $\ulambda(h)'_1\ge n-t$. In this case we have
\[
M(\ulambda)\ge b(\ulambda(h))\ge \sum_{k=1}^{n-t-1}k=\frac{1}{2}(n-t)(n-t-1)
\]
and by~\eqref{eqn:bound_sum_of_hook_lengths}
\begin{align*}
N(\ulambda)&\le \frac{1}{2}\sum_{f\in\Phi}\abs{f}\abs{\ulambda(f)}(\abs{\ulambda(f)}+1)\\
&\le \frac{n+1}{2}\sum_{f\in\Phi}\abs{f}\abs{\ulambda(f)}\\
&=\frac{n(n+1)}{2}.
\end{align*}
Therefore~\eqref{eqn:bound_f_n_M} implies that
\[
\frac{1}{\chi^{\ulambda}(1)}\le 4q^{-\frac{1}{2}(n-t)(n-t-1)},
\]
so that we have $\chi^{\ulambda}(1)\ge q^{n(t+1)}$ for all sufficiently large $n$ by very crude estimates.
\par
Hence we can assume that $\ulambda(f)'_1\le n-t-1$ and $\ulambda(f)_1\le n-t-1$ for all $f\in\Phi$ satisfying $\abs{f}=1$. Note that the second assumption is implied by the hypothesis $\ulambda\not\in\Pi_{\le t}$. In what follows we use the trivial bound $M(\ulambda)\ge 0$. We distinguish two cases.
\par
In the first case we assume that $\abs{\ulambda(f)}\le n-t-1$ for all $f\in\Phi$ satisfying $\abs{f}=1$. Let $\ell$ be the maximum of $\abs{\ulambda(f)}$ over all $f\in\Phi$ satisfying $\abs{f}=1$, hence $\ell\le n-t-1$. By~\eqref{eqn:bound_sum_of_hook_lengths} we have
\begin{align*}
N(\ulambda)&\le\frac{1}{2}\sum_{f\in\Phi}\abs{f}\abs{\ulambda(f)}(\abs{\ulambda(f)}+1)\\
&=\frac{n}{2}+\frac{1}{2}\sum_{f\in\Phi}\abs{f}\abs{\ulambda(f)}^2.
\end{align*}
If $\ell\le n/2$, then we have $\abs{\ulambda(f)}\le n/2$ for all $f\in\Phi$ and so $N(\ulambda)\le n^2/4+n/2$. From~\eqref{eqn:bound_f_n_M} we then find that $\chi^{\ulambda}(1)\ge q^{n(t+1)}$ for all sufficiently large $n$, again by very crude estimates. If $\ell>n/2$, then
\begin{align*}
N(\ulambda)&\le \frac{1}{2}(n+\ell^2+(n-\ell)^2)\\
&\le \frac{1}{2}(n+(n-t-1)^2+(t+1)^2)\\
&=\frac{n^2+n}{2}-n(t+1)+(t+1)^2,
\end{align*}
where we have used that $x^2+(n-x)^2$ is increasing for $x\ge n/2$. Hence in this case we obtain $1/\chi^{\ulambda}(1)\le 4q^{-n(t+1)+(t+1)^2}$ by~\eqref{eqn:bound_f_n_M}.
\par
In the remaining case we assume that there exists $h\in\Phi$ such that $\abs{h}=1$ and $\abs{\ulambda(h)}\ge n-t$. Recall that we also assume that $\ulambda(h)_1\le n-t-1$ and $\ulambda(h)'_1\le n-t-1$. Since $N(\ulambda)$ depends only on the hook lengths of $\ulambda(f)$ for $f\in\Phi$, we may replace $\ulambda(h)$ by its conjugate $\ulambda(h)'$. Assuming that $n$ is sufficiently large, namely $n\ge (t+2)^2$, we have $\ulambda(h)_1\ge t+2$ or $\ulambda(h)'_1\ge t+2$ and we assume without loss of generality that $\ulambda(h)_1\ge t+2$. Write $\ulambda(h)_1=n-r$, so that our assumptions imply $t+1\le r\le n-t-2$. Then, writing $s=\abs{\ulambda(h)}$, there exist nonnegative integers~$c_j$ satisfying
\[
\sum_{j=1}^{n-r}h_{1j}(\ulambda(h))=\sum_{j=1}^{n-r}(j+c_j),
\quad\text{where}\quad \sum_{j=1}^{n-r}c_j=s-(n-r).
\]
Hence
\[
\sum_{j=1}^{n-r}h_{1j}(\ulambda(h))=\binom{n-r+1}{2}+(s-n+r).
\]
Application of~\eqref{eqn:bound_sum_of_hook_lengths} with $\lambda=(\ulambda(h)_2,\ulambda(h)_3,\dots)$ gives
\begin{align*}
\sum_{(i,j)\in\ulambda(h)}h_{i,j}(\ulambda(h))&\le \binom{s-n+r+1}{2}+\binom{n-r+1}{2}+(s-n+r)\\
&=\frac{s^2}{2}+\frac{3s}{2}+n^2-sn-n+r(r-(2n-s-1))\\
&\le \frac{s^2}{2}+\frac{3s}{2}+n^2-sn-n+(t+1)((t+1)-(2n-s-1)),
\end{align*}
since the term depending on $r$ is maximised for $r=t+1$ over the interval $[t+1,n-t-2]$. This last expression equals
\[
\frac{s}{2}+\frac{1}{2}s(s-2(n-t-2))+n^2-n+(t+1)((t+1)-(2n-1)).
\]
The second summand is increasing for $s\ge n-t$ and so is at most $\frac{1}{2}n(n-2(n-t-2))$. Hence we obtain
\[
\sum_{(i,j)\in\ulambda(h)}h_{i,j}(\ulambda(h))\le \frac{s}{2}+\frac{n^2}{2}-n(t+1)+(t+1)(t+2).
\]
Invoking~\eqref{eqn:bound_sum_of_hook_lengths} once more, we obtain
\[
N(\ulambda)\le \sum_{(i,j)\in\ulambda(h)}h_{ij}+\frac{1}{2}\sums{f\in\Phi\\f\ne h}\abs{f}\abs{\ulambda(f)}(\abs{\ulambda(f)}+1).
\]
We have
\[
\frac{s}{2}+\frac{1}{2}\sums{f\in\Phi\\f\ne h}\abs{f}\abs{\ulambda(f)}=\frac{1}{2}\sum_{f\in\Phi}\abs{f}\abs{\ulambda(f)}=\frac{n}{2}
\]
and
\[
\frac{1}{2}\sums{f\in\Phi\\f\ne h}\abs{f}\abs{\ulambda(f)}^2\le \frac{1}{2}\bigg(\sums{f\in\Phi\\f\ne h}\abs{f}\abs{\ulambda(f)}\bigg)^2\le \frac{t^2}{2}.
\]
Collecting all terms, we find that
\[
N(\ulambda)\le \frac{n(n+1)}{2}-n(t+1)+(t+1)(t+2)+\frac{t^2}{2}.
\]
From~\eqref{eqn:bound_f_n_M} we then obtain
\[
\frac{1}{\chi^{\ulambda}(1)}\le 4q^{-n(t+1)+(t+1)(t+2)+\frac{1}{2}t^2},
\]
which completes the proof.
\end{proof}



\providecommand{\bysame}{\leavevmode\hbox to3em{\hrulefill}\thinspace}
\providecommand{\MR}{\relax\ifhmode\unskip\space\fi MR }
\providecommand{\MRhref}[2]{%
  \href{http://www.ams.org/mathscinet-getitem?mr=#1}{#2}
}
\providecommand{\href}[2]{#2}

\end{document}